\documentclass[12pt,reqno]{amsart}

\usepackage{graphicx,amsfonts,amssymb,amsmath,amsthm,xpatch}
\usepackage{fullpage}
\usepackage[svgnames]{xcolor}
\usepackage[hyphens,spaces,obeyspaces]{url}
\usepackage{hyperref}
\hypersetup{
  colorlinks = true,
  linkcolor =red,
  anchorcolor = red,
  citecolor = DarkGreen,
  urlcolor = blue
}
\hypersetup{
  pdftitle={Semiclassical measures through Coulomb collisions},
  pdfauthor={Nicholas Lohr},
  pdfsubject={Semiclassical analysis; Coulomb Hamiltonian; Moser regularization},
  pdfkeywords={semiclassical measures, Coulomb Hamiltonian, Moser regularization, hydrogen atom, geodesic flow}
}
\usepackage[all,arc]{xy}
\usepackage{tikz-cd,float}
\usepackage{mathrsfs,mathtools}
\usepackage{enumitem}
\usepackage{aligned-overset}
\usepackage{wrapfig,subcaption}

\newtheorem{theo}{{\sc Theorem}}[section]

\newtheorem{defn}[theo]{{\sc Definition}}

\newtheorem{cor}[theo]{{\sc Corollary}}

\newtheorem{lem}[theo]{{\sc Lemma}}

\theoremstyle{definition}
\newtheorem{example}[theo]{{\it Example}}
\newtheorem{rem}[theo]{{\it Remark}}

\allowdisplaybreaks

\makeatletter
\def\blfootnote{\gdef\@thefnmark{}\@footnotetext}
\def\@cite#1#2{[\textbf{#1}\if@tempswa, #2\fi]}
\newcounter{proofpart}
\xpretocmd{\proof}{\setcounter{proofpart}{0}}{}{}
\newcommand{\proofpart}[2]{%
  \par
  \addvspace{\medskipamount}%
  \stepcounter{proofpart}%
  \noindent\emph{#1 \theproofpart: #2}\par\nobreak\smallskip
  \@afterheading
}
\makeatother

\DeclareFontFamily{U}{mathx}{\hyphenchar\font45}
\DeclareFontShape{U}{mathx}{m}{n}{
      <5> <6> <7> <8> <9> <10>
      <10.95> <12> <14.4> <17.28> <20.74> <24.88>
      mathx10
      }{}
\DeclareSymbolFont{mathx}{U}{mathx}{m}{n}
\DeclareFontSubstitution{U}{mathx}{m}{n}
\DeclareMathAccent{\widecheck}{0}{mathx}{"71}
\DeclareMathAccent{\wideparen}{0}{mathx}{"75}

\title{Semiclassical measures through Coulomb collisions}

\author{Nicholas Lohr}
\address{Purdue University, West Lafayette, IN, 47901}
\email{nlohr@purdue.edu}
\date{\today}

\begin{document}
\blfootnote{The author has no competing interests to declare that are relevant to the content of this article.}
\begin{abstract}
We prove that $\mu$ is a semiclassical measure associated to a
sequence of eigenfunctions of energy $E<0$ of the attractive
Coulomb operator if and only if $\mu$ is a probability measure
on the energy (hyper)surface $\Sigma_E$ invariant under the
regularized Kepler flow due to Moser. The converse was shown in
recent work by the author \cite{L23}, and the present article
proves the other direction (as well as an independent proof of
the converse). We prove the main theorem for a general symbol
class allowing certain non-decay at infinity, which implies that
semiclassical measure mass entirely reflects off of the
origin. In the special case of semiclassical measures of sequences of eigenfunctions of the exact Coulomb operator, this
article solves an open problem posed by Keraani in \cite[Remark 1.11]{K05}. The main tools include the celebrated Moser-Fock map along with a technical operator extension lemma in $\Psi_{\hbar}^0(\mathbb{S}^d)$, which utilizes standard eigenfunction concentration bounds. 
\end{abstract}
\maketitle
\section{Introduction}
In this article, we further study the semiclassical
measures corresponding to eigenfunctions of the attractive Coulomb operator, defined as
\begin{equation}\label{eq:operator}
\widehat{H}_{\hbar}:L^2(\mathbb{R}^d) \to L^2(\mathbb{R}^d),\quad \widehat{H}_{\hbar}\coloneqq -\frac{\hbar^2}{2}\Delta-\frac{1}{|x|},\quad  \hbar>0,\quad d\geq 3.
\end{equation}
When $d=3,$ this operator is the first approximation of the quantum
hydrogen atom. That is, fixing the reduced mass of the
electron-proton system to $1$, the reduced Bohr radius to
$\hbar^2$, and ignoring all relativistic and spin-coupling
effects, the Schr\"odinger operator for the relative position of
the electron is given by $\widehat{H}_{\hbar}$. The operator is initially defined on $C_c^{\infty}(\mathbb{R}^d)$ by Hardy's inequality and extended to a self-adjoint Friedrichs extension on $L^2(\mathbb{R}^d)$
with domain $H^2(\mathbb{R}^d)$. We recall this and other well-known spectral theory of $\widehat{H}_{\hbar}$ in \S\ref{subsec:spec}. 
\par The attractive Coulomb operator corresponds to the
classical phase space Hamiltonian
\begin{equation*}
H:T^*(\mathbb{R}^d\setminus\{0\}) \to \mathbb{R},\quad H(x,\xi)\coloneqq \frac{|\xi|^2}{2}-\frac{1}{|x|},
\end{equation*}
called the Kepler Hamiltonian, where we identify
$T^*(\mathbb{R}^d\setminus\{0\})=\mathbb{R}^d\setminus\{0\}
\times \mathbb{R}^d$ using the Riemannian metric on
$\mathbb{R}^d\setminus\{0\}$. For a fixed energy $E$, the
Hamiltonian orbits, also called Kepler orbits, lie on the energy
(hyper)surface
\begin{equation*}
\Sigma_E \coloneqq \{(x,\xi) \in T^*(\mathbb{R}^d \setminus \{0\}) \mid H(x,\xi)=E\}.
\end{equation*}
For any energy $E \in \mathbb{R}$, $\Sigma_E$ is \textit{not}
compact due to the $x \to 0, \xi \to \infty$ regime, see Figure \ref{fig:4}.

  \begin{figure}[!htbp]
\centering
\begin{subfigure}{.65\textwidth}
\centering
  \includegraphics[width=0.75\linewidth]{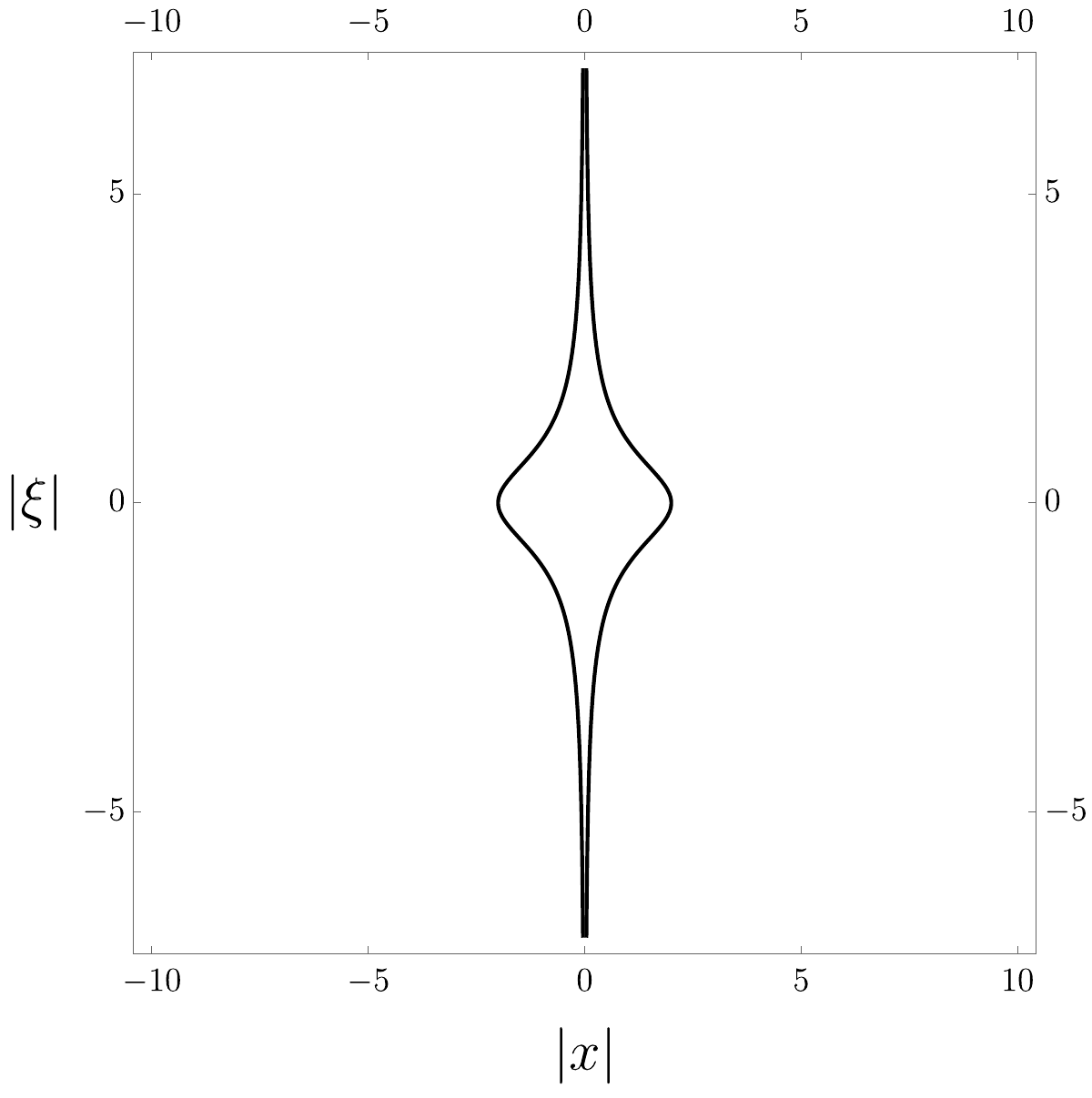}%
  \caption{Plot of $H(x,\xi)=-\frac{1}{2}$.}\label{fig:4a}
  \end{subfigure}%
  \hfill
\begin{subfigure}{.25\textwidth}
  \centering
  \includegraphics[width=.5\linewidth]{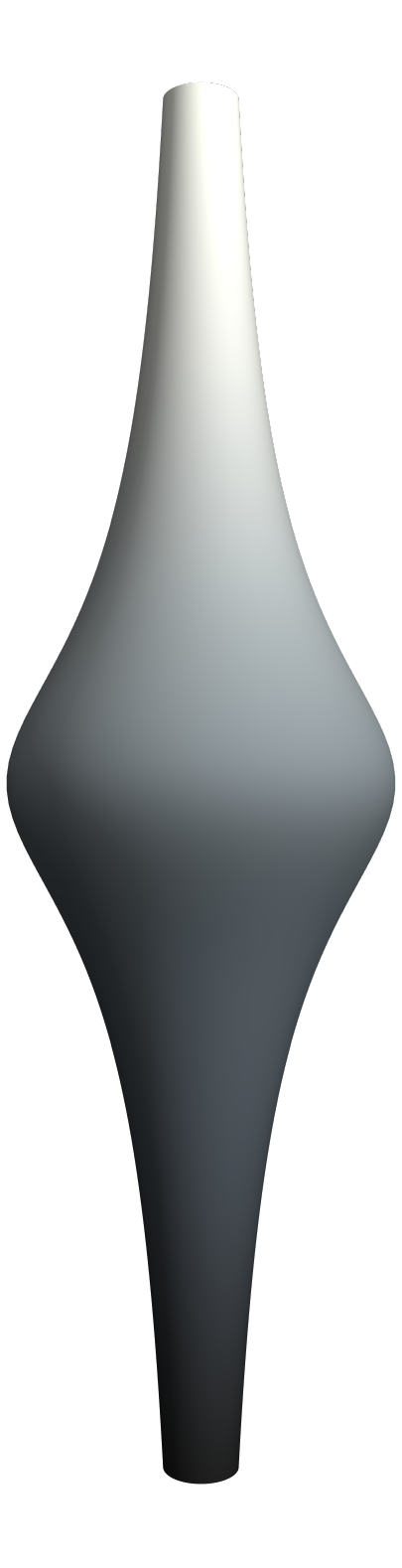}%
  \caption{Higher-dim\-en\-sion\-al schematic of $H=-\frac{1}{2}$\sloppy}\label{fig:4b}
  \end{subfigure}%
  \caption{Low-dimensional plots of $\Sigma_E$ when $E=-1/2$}\label{fig:4}%
  \end{figure}

For $E<0$, the orbits consist of two types: periodic orbits
whose configuration space projections are planar ellipses, and
unbounded ``collision'' orbits whose configuration space projections
are line segments terminating at the origin in finite
time, see Figure \ref{fig:1a}. The configuration space projections of the periodic
Kepler orbits follow Kepler's laws of planetary motion (with one
body fixed and all physical constants fixed to $1$). Namely, the periodic configuration space trajectories
\begin{itemize}
\item are ellipses with the origin fixed at one focus,
\item are such that the line segment connecting the trajectory
to the origin sweeps out equal areas during equal time
intervals,
\item have period $T$ related to the energy $E$ by the
formula
\begin{equation}\label{eq:kepler3}
T=\frac{2\pi}{p_0^3},\quad p_0 \coloneqq \sqrt{-2E},
\end{equation}
\end{itemize}
where we have used our convention on physical constants. Observe
that Kepler’s third law is popularly stated with the length of
the semi-major axis $a$, but, with our conventions, $a=p_0^{-2}$
(see \cite[(5)]{M83} and \cite[\S1.1.2.]{L25}).
\par This Hamiltonian system is not only completely integrable, but it is maximally superintegrable with $2d-1$ independent integrals of motion among the components of the
conserved quantities of the Hamiltonian $H$, the angular momentum $(d-2)$-vector $L$, and
the Runge-Lenz eccentricity vector $R$ defined by
\begin{equation}\label{eq:rungelenz}
L(x,\xi) \coloneqq \star(x \wedge \xi),\qquad R(x,\xi)=\Big(|\xi|^2-\frac{1}{|x|} \Big)x-(x\cdot \xi)\xi,
\end{equation}
where $\star$ denotes the Hodge-star operator.\par
On $\Sigma_E$, the magnitudes of these quantities are related by
the formula
$$|R|^2=1+2E|L|^2. $$
A Kepler orbit is a collision orbit if and only if $L=0$. Provided that $L\neq 0$, in configuration space, $L$ determines the plane of motion, $|R|$ is the eccentricity of the ellipse, $R$ and the foci are collinear, and $|2E|^{-1}$
is the length
of the semi-major axis (as noted previously). The Runge-Lenz vector $R$ has a long, complicated history of discovery and rediscovery (see the works of Goldstein \cite{G75,G76}), but,
most noteworthy, Hamilton in \cite{H47} showed that the Runge-Lenz vector can be understood as coming from the geometry of the momentum space projections of the Kepler orbits,
which miraculously happen to be circles. Each circle has radius $1/|L|$ and is centered at the point obtained by rotating $R/|L|$ by $90^{\circ}$ in the plane of motion (more
carefully, these circles degenerate into lines for the collision orbits). The superintegrability
explains why the bounded orbits are not merely quasi-periodic and confined to invariant tori
as guaranteed from the Liouville-Arnold theorem (see \cite[Chapter 10]{A89}), but the bounded
orbits are genuinely \textit{periodic} (see \cite{GS90} for more on the symmetries of this problem).
\par However, because of the collision orbits, the Hamiltonian
flow of $H$ is \textit{not} complete. In \cite{M70}, Moser
compactified $\Sigma_E$ to a manifold $\overline{\Sigma_E}$
(defined in \eqref{eq:compact}) where the Hamiltonian flow is
regularized by a reflection condition. Roughly speaking, when
the collision orbits hit the origin, they are reflected back
along the same line, resembling a degenerate ellipse. The
manifold $\overline{\Sigma_E}$ is diffeomorphic to $S^*
\mathbb{S}^d$, and, up to a reparametrization, the regularized
Hamiltonian flow maps to the cogeodesic flow on
$\mathbb{S}^d$. In particular, the collision orbits are mapped
to the great circles passing through the `north pole' of
$\mathbb{S}^d$. This completes the Hamiltonian flow and extends
the collision orbits past their finite collision time to be
periodic on all time and obeying Kepler's third law
\eqref{eq:kepler3}.
\par For fixed $E<0$ and sequences $\hbar_j \to 0$, $E_{\hbar_j} \to E$, we say that a sequence $\Psi_j$
of $L^2$-normalized eigenfunctions of $\widehat{H}_{\hbar_j}$
satisfying
\begin{equation}\label{eq:EIGGG}
\widehat{H}_{\hbar_j}\Psi_j=E_{\hbar_j} \Psi_j
\end{equation}
converges to a nonnegative Radon measure $\mu$ on $T^*\mathbb{R}^d$ in the sense of semiclassical
measures if, for any $a \in C_c^{\infty}(T^*\mathbb{R}^d)$, we
have
\begin{equation*}
\langle \operatorname{Op}_{\hbar_j}(a)\Psi_{j},\Psi_{j}
\rangle \xrightarrow{j \to \infty }\int_{T^*\mathbb{R}^d}a(x,\xi)
d\mu(x,\xi), 
\end{equation*}
where $\operatorname{Op}_{\hbar}$ denotes semiclassical Weyl
quantization. For the main theorem statement, we say that a nonnegative Radon
measure $\mu$ on $\Sigma_E$ is invariant under the Hamiltonian
flow if the pushforward measure $(i_{\Sigma_E})_*\mu$ is invariant under the Moser-regularized Hamiltonian flow $\overline{\Xi_H^{\bullet}}$ (defined in \eqref{eq:reghamil}) where $i_{\Sigma_E}:\Sigma_E\to \overline{\Sigma_E}$ is the inclusion map (defined in \eqref{eq:inclusion}).
\subsection{Statement of Results}
\begin{theo}\label{theo:1}
With test functions $a \in C_c^{\infty}(T^*\mathbb{R}^d)$, $\mu$ is a semiclassical measure associated to a sequence of eigenfunctions of energy $E<0$ of $\widehat{H}_{\hbar}$ if and only if $\mu$ is a probability measure supported on $\Sigma_E$ invariant under the Moser-regularized Hamiltonian flow.
\end{theo}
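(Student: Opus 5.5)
The plan is to transfer the whole problem to the round sphere $\mathbb{S}^d$ with the Moser--Fock map and use known results for spherical harmonics there. Fix $E<0$ and set $p_0=\sqrt{-2E}$. After the scaling $x\mapsto p_0^{-2}x$ we may take $p_0=1$.

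The eigenvalues of $\widehat{H}_{\hbar}$ are $E_n=-1/(2\hbar^2(n+\frac{d-1}{2})^2)$. So $E_{\hbar_j}\to E$ forces $\hbar_j(n_j+\tfrac{d-1}{2})\to p_0^{-1}$. The Fock transform then sends the momentum-space eigenfunction $\widehat{\Psi}_j$ to a spherical harmonic $u_j$ of degree $n_j$ on $\mathbb{S}^d$. Explicitly,
\[
\widehat{\Psi}_j(\xi)= c\,(1+|\xi|^2)^{-\frac{d+1}{2}}\, u_j(\mathcal{S}(\xi)),
\]
where $\mathcal{S}$ is inverse stereographic projection and the weight is chosen so that $L^2$ norms match up to the factor $\langle(1+|\xi|^2)^{-1}\cdot,\cdot\rangle$. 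This is the standard Fock identity.

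\textbf{Step 1: conjugating quantizations.} The key intermediate claim is that for $a\in C_c^\infty(T^*\mathbb{R}^d)$,
\[
\langle \operatorname{Op}_{\hbar_j}(a)\Psi_j,\Psi_j\rangle = \langle \operatorname{Op}_{h_j}(\tilde a)u_j,u_j\rangle_{L^2(\mathbb{S}^d)} + o(1).
\]
Here $h_j=(n_j+\tfrac{d-1}{2})^{-1}$, and $\tilde a\in C_c^\infty(T^*\mathbb{S}^d\setminus 0)$ is obtained from $a$ through the Moser symplectomorphism. That map swaps the roles of position and momentum: $\xi$ lifts to a point of $\mathbb{S}^d$ and $x$ to a cotangent vector. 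It also includes the weight factor and the time-reparametrization Jacobian $|x|$ restricted to $\Sigma_E$.

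The map is a symplectomorphism from $T^*\mathbb{R}^d$ away from the collision set onto $T^*\mathbb{S}^d$ minus the fibre over the north pole $N$. Egorov-type invariance for this Fourier integral operator (semiclassical Fourier transform followed by a unitary change of variables) then gives the identity.

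One subtlety must be checked: $\tilde a$ has compact support away from the fibre over $N$, since $a$ is compactly supported in $\xi$. We also need $u_j$ to be localized to $|\eta|\approx 1$ in the $h_j$-calculus, so that symbols can be cut off near the sphere bundle. This follows because $h_j^2\Delta_{\mathbb{S}^d}u_j=(1+O(h_j))u_j$.

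\textbf{Step 2: the forward direction.} Pass to a subsequence so that $u_j$ has a semiclassical measure $\nu$ on $S^*\mathbb{S}^d$. By the standard theory for Laplace eigenfunctions, $\nu$ is a probability measure invariant under the geodesic flow. Step 1 identifies $\mu$ on $\Sigma_E$ with $\nu$ restricted to the complement of $S^*_N\mathbb{S}^d$, weighted by the Jacobian.

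Two things remain:
\begin{itemize}
\item \emph{Total mass.} We must show $\mu(\Sigma_E)=1$, i.e.\ that there is no escape of mass to infinity and that $\nu(S^*_N\mathbb{S}^d)=0$.
\item \emph{Invariance.} Invariance of $\nu$ under the geodesic flow translates, via Moser's conjugacy, into invariance of $(i_{\Sigma_E})_*\mu$ under $\overline{\Xi_H^\bullet}$.
\end{itemize}

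\textbf{Main obstacle.} I expect the total-mass step to be the hardest part; this is where the technical extension lemma in $\Psi^0_\hbar(\mathbb{S}^d)$ enters. For $\nu(S^*_N\mathbb{S}^d)=0$, note that the fibre over a single point has measure zero for any geodesic-flow-invariant measure: a measure invariant under the flow that charges a single fibre would have to charge a $1$-parameter family of disjoint fibres, namely the translates under the flow. So it suffices to show that $\nu$ has total mass one and is not concentrated elsewhere.

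For the total mass, the Fock normalization gives $\|\Psi_j\|^2=\langle m u_j,u_j\rangle$ with $m$ the weight. Concentration of mass near $N$, which corresponds to $|\xi|\to\infty$, must be controlled. I would use standard $L^2$ concentration bounds for spherical harmonics in small balls: $\|u\|_{L^2(B(N,r))}\lesssim r^{1/2}$ uniformly in the frequency. Combined with symbols that blow up at most like the weight near $N$, this makes the contribution near $N$ vanish as $r\to0$. This also lets us extend the Step 1 identity to symbols not decaying at infinity.

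\textbf{Step 3: the converse.} Take $\mu$ invariant on $\Sigma_E$ and push it forward to get an invariant probability $\nu$ on $S^*\mathbb{S}^d$. Any geodesic-flow-invariant probability on the round sphere is a semiclassical measure of some sequence of spherical harmonics; this is the Jakobson--Zelditch result, using Gaussian beams along great circles and convex combinations via the large multiplicities. Pulling those harmonics back through the Fock map and applying Steps 1--2 gives eigenfunctions $\Psi_j$ whose semiclassical measure is $\mu$.
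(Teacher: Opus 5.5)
Your outline follows the paper's proof. You conjugate by the Fock map and pick up the weight $1-u_{d+1}$, which pulls back to $\frac{2p_0^2}{|\xi|^2+p_0^2}=p_0^2|x|$ on $\Sigma_E$, the Moser time-change factor. You then identify $\mu=(\mathcal{M}_E^{-1})_*\big((1-u_{d+1})\nu\big)$, with $\nu$ a geodesic-invariant semiclassical measure of spherical harmonics, and use Jakobson--Zelditch for the converse. There are two issues, both about where the difficulty actually lies.

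\textbf{Total mass is not the hard step for Theorem~\ref{theo:1}.} No concentration estimate is needed there. The Fock map is unitary on eigenspaces by the virial theorem, so $\|\phi_j\|=1$ and $\nu$ is a probability measure. Flowing by $\pi$ (the antipodal map) gives $\int u_{d+1}\,d\nu=0$, so $(1-u_{d+1})\nu$ has mass one. Since $1-u_{d+1}$ vanishes on $S^*_{\mathsf{NP}}\mathbb{S}^d$, this is exactly $\mu(\Sigma_E)=1$ (Remark~\ref{rem:invsig}).

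Even with your normalization $\|\Psi_j\|^2=\langle m u_j,u_j\rangle$ it is one line. Because $m$ is smooth on $\mathbb{S}^d$ and vanishes at $\mathsf{NP}$, you get $1=\lim\langle m u_j,u_j\rangle=\int m\,d\nu=\mu(\Sigma_E)$, given $\sup_j\|u_j\|<\infty$. The paper uses the small-ball bound only in Lemma~\ref{lem:ext2}, that is, for Theorem~\ref{theo:2} with non-decaying symbols. For compactly supported $a$, the extension lemma enters earlier than you say: it is needed to make $\widehat{\omega}\operatorname{Op}_\hbar(\cdot)\widehat{\omega}^{-1}$ a genuine operator in $\Psi^0_\hbar(\mathbb{S}^d)$. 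This is the elementary Lemma~\ref{lem:ext1}, which only shows the kernel is $O(\hbar^\infty)$ near $\mathsf{NP}$. Your ``Egorov-type invariance'' step silently assumes it.

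\textbf{The invariance transfer needs a real argument.} You treat invariance passing between $\nu$ and $(i_{\Sigma_E})_*\mu$ ``via Moser's conjugacy'' as automatic. But $\overline{\mathcal{M}_E}$ conjugates $\overline{\Xi_H^{\bullet}}$ only to an orbit-dependent time change of the geodesic flow. Its speed factor $ds/dt=p_0^3/(1-u_{d+1})$ blows up on $S^*_{\mathsf{NP}}\mathbb{S}^d$. A plain pushforward therefore does not carry invariant measures to geodesic-invariant ones. The correct statement is Lemma~\ref{lem:invsig}: $\overline{\mu}$ is invariant iff $(\overline{\mathcal{M}_E})_*\overline{\mu}=(1-u_{d+1})\nu$ with $\nu$ a geodesic-invariant probability measure.

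In the forward direction, your Jacobian weight makes the conclusion correct, but this lemma must still be proved. In Step~3, the literal pushforward of $\mu$ is not geodesic-invariant, so Jakobson--Zelditch cannot be applied to it. You must instead set $\nu=(1-u_{d+1})^{-1}(\overline{\mathcal{M}_E})_*\overline{\mu}$. You then have to show this is a finite, mass-one, geodesic-invariant measure despite the singular weight. The paper does this by disintegrating $\overline{\mu}$ along the closed orbits, all of period $2\pi/p_0^3$, and changing variables $t\mapsto t(s)$ orbit by orbit.

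A small correction to your null-fibre argument: the flow-outs $\Phi^s(S^*_{\mathsf{NP}}\mathbb{S}^d)$ are not fibres. They are pairwise disjoint because they lie in the distinct level sets $\{u_{d+1}=\cos s\}$, $s\in(0,\pi)$.
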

The `if' statement was the main result of \cite{L23}, and this
paper proves the `only if' statement (although we give another
proof of the `if' statement in the present article). \par In
smooth settings, proving the `only if' statement, that semiclassical measures of
eigenfunctions are probability measures supported on the energy
hypersurface and invariant under the Hamiltonian flow, is relatively
straightforward, as seen in the following example.
\begin{example}\label{ex:main}

Suppose $(M,g)$ is a compact smooth
manifold and $\{u_h\}_{h>0}$ is a sequence of $L^2$-normalized
$1$-eigenfunctions of $-h^2\Delta_{M}$. Then any semiclassical
measure $\mu$ of $\{u_h\}_{h>0}$ satisfies
\begin{gather}
\operatorname{supp} \mu \subseteq S^*M,\label{eq:supp}\\
(\Xi_g^t)_*\mu=\mu, \label{eq:inv}\\
  \mu(S^*M)=1,\label{eq:probb}
\end{gather}
where $\Xi_g^t$ denotes the (co)geodesic flow. Properties \eqref{eq:supp},\eqref{eq:inv} follow from considering the test functions $a(u,\eta)(|\eta|_g^2-1),\{a(u,\eta),|\eta|_g^2-1\},$ respectively, where $a \in C_c^{\infty}(T^*M)$ and $\{\bullet,\bullet\}$ is the Poisson bracket (see \cite[Theorems E.34, E.35]{DZ19} for more details). Property \eqref{eq:probb} follows from the fact that the semiclassical convergence statement involving test functions in $C_c^{\infty}(T^*M)$ remains true in the Kohn-Nirenberg symbol class $S_{1,0}^0(T^*M)$, which is a consequence of $M$ being compact and $-h^2\Delta_M$ being semiclassically elliptic at fiber infinity (see \cite[Exercise E.23]{DZ19}).
\end{example}
Unfortunately, the arguments used to prove the statements in Example \ref{ex:main}
cannot be applied fully to $\widehat{H}_{\hbar}$, as the
singular $1/|x|$ potential causes serious issues. While an
argument analogous to the one used to prove \eqref{eq:supp} can
prove $\{x \neq 0\} \cap \operatorname{supp}\mu \subseteq
\Sigma_E$, it is nontrivial to investigate the region where $x$
is allowed to vanish. Additionally, since $\Sigma_E$ is not compact, mass
could `leak' into the origin as $x \to 0,\xi \to \infty,$ which
would lead to $\mu(\Sigma_E)<1$. For statements about
Hamiltonian flow invariance, we recall that the classical
Hamiltonian flow on $\Sigma_E$ is \textit{not} complete, so we
have to carefully define what it even means for a measure on
$\Sigma_E$ to be invariant under the Hamiltonian flow. Because
of this, any argument similar to the usual one proving
\eqref{eq:inv} does not directly apply in this context.
\par Theorem \ref{theo:1} shows that any leakage of mass in the
origin is not possible. In some sense, the semiclassical
measures only `see' Moser's regularization when it comes to the
collision orbits. More carefully, we have the following
corollary:
\begin{cor}
Let $\gamma$ be a Kepler phase space orbit of energy $E<0$ with initial point $\gamma(0) \in \Sigma_E$. There exists a semiclassical measure $\mu$ of Coulomb
eigenfunctions of energy $E<0$ satisfying
$\mu=\delta_{\gamma}$ if and only if $\gamma$ is not a
collision orbit, where $\delta_{\gamma}$ is defined by
$$\int a d \delta_{\gamma} \coloneqq
\frac{1}{t_*}\int_0^{t_*}a(\gamma(t))dt, $$
where $t_*$ denotes either the period time $T=2\pi/p_0^3$ of $\gamma$ (in the case of a non-collision orbit, see \eqref{eq:kepler3}) or the collision time (in the case of a collision orbit, see Remark \ref{rem:collision}). 
\end{cor}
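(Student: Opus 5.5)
The corollary follows from Theorem \ref{theo:1} once we check which measures of the form $\delta_\gamma$ are probability measures on $\Sigma_E$ invariant under the Moser-regularized flow $\overline{\Xi_H^{\bullet}}$. Concretely, I would show that $(i_{\Sigma_E})_*\delta_\gamma$ is $\overline{\Xi_H^{\bullet}}$-invariant precisely when $\gamma$ is not a collision orbit. The theorem then gives existence of a sequence of eigenfunctions with semiclassical measure $\delta_\gamma$ in the non-collision case, and non-existence in the collision case.

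\emph{Non-collision orbits.} If $\gamma$ is not a collision orbit, it is a periodic orbit of $H$ contained in $\Sigma_E$, with period $T=2\pi/p_0^3$ by \eqref{eq:kepler3}. Since $\gamma$ never reaches $x=0$, the regularized flow agrees with the Hamiltonian flow along $\gamma$ (up to the reparametrization built into \eqref{eq:reghamil}, which restricts to the genuine flow on $\Sigma_E$). Then $\delta_\gamma$ is the normalized time average over one period, so it is a probability measure on $\Sigma_E$. The standard computation
\[
\int a\circ\Xi^s\,d\delta_\gamma=\frac1T\int_0^T a(\gamma(t+s))\,dt=\int a\,d\delta_\gamma
\]
proves invariance. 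Theorem \ref{theo:1} then produces a sequence of eigenfunctions with $\mu=\delta_\gamma$.

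\emph{Collision orbits.} If $\gamma$ is a collision orbit, $\delta_\gamma$ is the normalized time average over $[0,t_*)$, where $t_*$ is the collision time of Remark \ref{rem:collision}. Note that $\gamma$ may start partway along the collision segment. Its pushforward to $\overline{\Sigma_E}$ is supported on the forward arc $\{\overline{\Xi_H^s}(\gamma(0)) : 0\le s<t_*\}$, which is the part of the regularized (periodic) orbit through $\gamma(0)$ lying before the reflection point. After the collision, Moser's regularization reflects the orbit back along the same line with reversed momentum. It then traces the outgoing segment, reaches its apocenter, and returns, so the full regularized orbit has period $T$.

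I expect the main obstacle to be showing that this reflected part is genuinely disjoint from the support of $\delta_\gamma$, so that $\delta_\gamma$ is not invariant. The reflected points have momentum $-\xi$ at the same positions, so the incoming and outgoing phase-space segments do not overlap; in the sphere picture, these are two disjoint arcs of the same great circle through the north pole. I would proceed as follows.
\begin{enumerate}
\item Choose $a\in C_c^\infty(T^*\mathbb{R}^d)$ supported near a point $\overline{\Xi_H^{s_0}}(\gamma(0))$ on the post-collision part of the orbit and away from the support of $\delta_\gamma$. Then $\int a\,d\delta_\gamma=0$.
\item Apply the flow for a suitable time $s$ so that this test region is transported onto the pre-collision part of the orbit. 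Then $\int a\circ\overline{\Xi_H^{s}}\,d\delta_\gamma>0$.
\item Conclude that $\delta_\gamma$ is not invariant under $\overline{\Xi_H^{\bullet}}$, so by Theorem \ref{theo:1} it is not a semiclassical measure.
\end{enumerate}
The regularized flow can take the test region through the collision point itself. To stay safely inside $\Sigma_E$, I would take both the test point and the time $s$ away from the collision, and work on a compact subset of $\Sigma_E$ where the regularized flow is continuous. This keeps the pushforward well-defined, which is the one care point in the argument.
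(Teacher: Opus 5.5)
Your proposal is correct and takes the route the paper intends. The paper gives no separate proof: the corollary is read off from Theorem \ref{theo:1} by checking that $(i_{\Sigma_E})_*\delta_\gamma$ is $\overline{\Xi_H^{\bullet}}$-invariant exactly when $\gamma$ is a full periodic (non-collision) orbit. For a collision orbit, $\delta_\gamma$ charges only the proper arc $[0,t_*)$ of a regularized orbit of minimal period $T$, so it cannot be invariant. Your final ``care point'' is unnecessary: $\overline{\Xi_H^t}$ is a continuous flow on all of $\overline{\Sigma_E}$, conjugate via $\overline{\mathcal{M}_E}$ to the cogeodesic flow up to a monotone time change, so the pushforwards are always well defined.
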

However, if $\overline{\gamma}$ denotes a Moser-regularized
collision orbit in phase space, then Theorem \ref{theo:1}
shows that there does indeed exist a semiclassical measure $\mu$
of Coulomb eigenfunctions of energy $E<0$ satisfying
$\mu=\delta_{\overline{\gamma}}$.
\par A natural question is whether Theorem \ref{theo:1} remains
true for a more general class of symbols that have support when
$x\to 0, \xi \to \infty$. Again, due to the non-compactness of
$\Sigma_E$, one has to be careful defining semiclassical
measures in this context. A natural candidate for symbols would
be those that work well with semiclassical analysis, and, when they are
restricted to $\Sigma_E$, they extend smoothly to
$\overline{\Sigma_E}$. Indeed, we define
\begin{equation}\label{eq:symb1}
S_{\overline{\Sigma_E}} \coloneqq \{a \in S(1)\, : \,a|_{\Sigma_E} \text{ extends smoothly to }\overline{\Sigma_E}\}
\end{equation}
where we refer to Remark \ref{rem:bars} for the definition of smoothness on $\overline{\Sigma_E}$, and $S(1)= S_{0,0}^0(T^*\mathbb{R}^d)$ denotes the
Kohn-Nirenberg class
\begin{equation}\label{eq:kn}
S_{0,0}^0(T^*\mathbb{R}^d) \coloneqq \{ a \in C^{\infty}(T^*\mathbb{R}^d)\, : \,  |\partial^{\alpha}a|\leq C_{\alpha}\text{ for all }\alpha \in \mathbb{N}^{2d}\}.
\end{equation}
For fixed $E<0$, we then say that a measure $\mu$ on $\overline{\Sigma_E}$ is a
semiclassical measure of a sequence of $L^2$-normalized $E_{\hbar_j}$-eigenfunctions $\Psi_j$ of $\widehat{H}_{\hbar_j}$ where $\hbar_j \to 0$, $E_{\hbar_j} \to E$
if, for any $a \in S_{\overline{\Sigma_E}}$, we
have
\begin{equation}\label{eq:seminew}
\langle \operatorname{Op}_{\hbar_j}(a)\Psi_{j},\Psi_{j}
\rangle \xrightarrow{j \to \infty }\int_{\overline{\Sigma_E}}\overline{a} d\mu,
\end{equation}
where $\operatorname{Op}_{\hbar}$ denotes semiclassical Weyl
quantization.
\begin{theo}\label{theo:2}
Theorem \ref{theo:1} remains true for test functions $a \in S_{\overline{\Sigma_E}}$. That is, for test functions $a \in S_{\overline{\Sigma_E}}$, $\mu$ is a semiclassical measure on $\overline{\Sigma_E}$ associated to a sequence of eigenfunctions of energy $E<0$ of $\widehat{H}_{\hbar}$ (as in \eqref{eq:seminew}) if and only if $\mu$ is a probability measure on $\overline{\Sigma_E}$ invariant under the Moser-regularized Hamiltonian flow.
\end{theo}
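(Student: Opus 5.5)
The plan is to transfer the whole problem to the sphere via the Moser--Fock map and to reduce both directions of Theorem \ref{theo:2} to the classical statement of Example \ref{ex:main} on $M=\mathbb{S}^d$.

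\textbf{Reduction to the sphere.} Fix $E<0$ and $p_0=\sqrt{-2E}$. The Fock transform (Fourier transform, stereographic projection of momentum space onto $\mathbb{S}^d$, multiplication by a conformal weight) conjugates the eigenvalue problem \eqref{eq:EIGGG} to a spherical harmonic problem. An $L^2$-normalized Coulomb eigenfunction $\Psi_j$ with eigenvalue $E_{\hbar_j}=-1/(2\hbar_j^2(n_j+\frac{d-1}{2})^2)$ becomes a spherical harmonic $u_j$ of degree $n_j$. Its norm is taken in a weighted $L^2(\mathbb{S}^d)$, with weight $(1-\omega_0)$ up to constants, where $\omega_0$ is the north-pole coordinate. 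Setting $h_j=\hbar_j p_0$ up to $O(\hbar_j^2)$, $u_j$ is an $h_j$-quasimode of $-h_j^2\Delta_{\mathbb{S}^d}$ at energy $1$.

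The key structural claim is the following. For $a\in S_{\overline{\Sigma_E}}$, the Fock map conjugates $\operatorname{Op}_{\hbar_j}(a)$, modulo the spectral localization near $E$, into an operator $B_j\in\Psi^0_{h_j}(\mathbb{S}^d)$. Its principal symbol restricted to $S^*\mathbb{S}^d$ equals the pushforward of $\overline{a}$ under the Moser diffeomorphism $\overline{\Sigma_E}\cong S^*\mathbb{S}^d$, multiplied by the Jacobian factor $(1-\omega_0)$ that converts the reparametrized time.

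To establish this:
\begin{enumerate}
\item Replace $a$ by $a\chi(\widehat H_{\hbar})$ with $\chi$ supported near $E$. This is allowed because $\chi(\widehat H_{\hbar_j})\Psi_j=\Psi_j$.
\item Away from $x=0$, compute the conjugated operator with Egorov-type arguments for the (smooth) Fock map, which is a Fourier integral operator.
\item Near the north pole, i.e.\ $x\to 0$ and $\xi\to\infty$, the Fock map is singular. There I would use the technical operator extension lemma: an operator defined in $\Psi^0_h$ on $\mathbb{S}^d\setminus\{N\}$ whose symbol extends smoothly to all of $S^*\mathbb{S}^d$ agrees, when tested against eigenfunctions, with a genuine operator in $\Psi^0_h(\mathbb{S}^d)$, up to errors that vanish as $h\to 0$.
\item Control those errors with the standard $L^\infty$/$L^2$ concentration bounds for spherical harmonics in $h$-neighborhoods of a point, which give mass $O(h^{d/2})$-type smallness in shrinking balls around $N$.
\end{enumerate}
This is where the hypothesis that $a|_{\Sigma_E}$ extends smoothly to $\overline{\Sigma_E}$ is used. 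I expect this extension and error-control step to be the main obstacle.

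\textbf{Only if.} Given the reduction, pass to a subsequence so that $u_j$ has a semiclassical measure $\nu$ on $S^*\mathbb{S}^d$. By Example \ref{ex:main}, $\nu$ is a probability measure invariant under the geodesic flow; the quasimode error is $o(h)$, which suffices. Then
\[
\langle \operatorname{Op}_{\hbar_j}(a)\Psi_j,\Psi_j\rangle \to \int \overline{a}\,(1-\omega_0)\,d\nu,
\]
so $\mu=(1-\omega_0)\nu$ pulled back, and it is normalized by testing $a=1$; indeed $a=1$ lies in $S_{\overline{\Sigma_E}}$, which is exactly why no mass is lost. The density $(1-\omega_0)$ is precisely the time-change between the geodesic flow and $\overline{\Xi_H^{\bullet}}$, and a measure $\nu$ invariant under a flow $X$ yields $f\nu$ invariant under $f^{-1}X$. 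Hence $\mu$ is invariant under the Moser-regularized flow.

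\textbf{If.} Conversely, given an invariant probability measure $\mu$ on $\overline{\Sigma_E}$, set $\nu=(1-\omega_0)^{-1}\mu$ after normalization. This is a geodesic-flow-invariant probability measure on $S^*\mathbb{S}^d$, though one must check integrability near the collision set, which follows because $1-\omega_0$ vanishes only at fibers over $N$ where the Jacobian is bounded. By the known realization of every invariant measure on the round sphere as a semiclassical measure of spherical harmonics (Jakobson--Zelditch), choose $u_j$ realizing $\nu$. Pulling these back by the Fock map gives Coulomb eigenfunctions whose semiclassical measure, by the same reduction, is $\mu$. Theorem \ref{theo:1} then follows by restricting to $a\in C_c^\infty$.
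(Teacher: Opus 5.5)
Your architecture matches the paper's. You transport by the Fock map and replace $\widehat{\omega}\circ\operatorname{Op}_{\hbar}(a)\circ\widehat{\omega}^{-1}$, on the eigenspace, by an operator in $\Psi_{\hbar}^0(\mathbb{S}^d)$ whose principal symbol on $S^*\mathbb{S}^d$ is $(1-u_{d+1})(\overline{\mathcal{M}_E}^{-1})^*\overline{a}$ (your $\omega_0$ is $u_{d+1}$). You then use invariance of spherical-harmonic measures and Jakobson--Zelditch.

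\textbf{Main gap: the control at $\mathsf{NP}$.} This is the one step genuinely new for Theorem \ref{theo:2}, and as you sketch it, it does not close.
\begin{itemize}
\item Near $\mathsf{NP}$ the conjugated operator is not uniformly in $\Psi^0_h(\mathbb{S}^d)$. In the inverted chart, for general $a\in S(1)$ the transplanted symbol varies on the scale $\operatorname{dist}(\bullet,\mathsf{NP})^2$, so its derivatives blow up at $\mathsf{NP}$.
\item Consequently the symbolic calculus controls the error only on $\{\operatorname{dist}(\bullet,\mathsf{NP})\geq\delta\}$, with $\delta$-dependent constants. Heuristically it breaks down on a ball of radius about $h^{1/2}$, not $h$, and you give no argument that the leftover error is confined to an $h$-neighbourhood.
\item On a $\delta$-ball, the sup-norm bound $\lVert u\rVert_{L^\infty}\lesssim h^{-(d-1)/2}$ only gives $\int_{B_\delta}|u|^2\lesssim\delta^d h^{1-d}$. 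This is not small once $\delta\gg h^{(d-1)/d}$.
\item The rate you quote is wrong even at scale $h$: a normalized zonal harmonic at $\mathsf{NP}$ has mass $\asymp h$ in an $h$-ball, not $h^{d/2}$.
\end{itemize}
What makes the step work, as in Lemma \ref{lem:ext2}, is the $h$-uniform local $L^2$ bound $\lVert\mathbf{1}_{\operatorname{dist}(\bullet,\mathsf{NP})<2\delta}\Pi_h\rVert_{L^2\to L^2}=O(\delta^{1/2})$ for $h<\delta$, combined with a splitting at fixed $\delta$:
\begin{itemize}
\item Near $\mathsf{NP}$, use only the uniform $L^2$-boundedness of $B_h$ and of $\widehat{\omega}\circ\operatorname{Op}_h(a)\circ\widehat{\omega}^{-1}$, together with the local $L^2$ bound.
\item Away from $\mathsf{NP}$, insert the sphere-side cutoff $\chi(-h^2\Delta_{\mathbb{S}^d})$. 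It lies in the standard calculus, unlike $\chi(\widehat{H}_{\hbar})$ near $x=0$. Then factor the symbol difference, which vanishes on $S^*\mathbb{S}^d$, through $-h^2\Delta_{\mathbb{S}^d}-1$.
\end{itemize}
This gives $O(\delta^{1/2})+O_\delta(h)$. You must take $\limsup_{h\to0}$ first and only then let $\delta\to0$.

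\textbf{Smaller gaps.}
\begin{itemize}
\item \emph{Identifying $\mu$ in the ``only if'' direction.} Equality of $\int\overline{a}\,d\mu$ and $\int(1-u_{d+1})(\overline{\mathcal{M}_E}^{-1})^*\overline{a}\,d\nu$ for all $a\in S_{\overline{\Sigma_E}}$ does not by itself determine $\mu$. Every $a\in S(1)$ is Lipschitz. Any two points of the added fiber $\overline{\Sigma_E}\setminus i_{\Sigma_E}(\Sigma_E)$ are limits of pairs $(x_n,\xi_n),(x_n',\xi_n)\in\Sigma_E$ with $|x_n|=|x_n'|\to0$. Hence every such $\overline{a}$ is constant on that fiber, and the test class cannot separate measures there. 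You must combine three facts, as the paper does: agreement on $\Sigma_E$ (from $a\in C_c^\infty$), total mass $1$ (from $a\equiv1$), and the vanishing of $(1-u_{d+1})\nu$ on $S^*_{\mathsf{NP}}\mathbb{S}^d$.
\item \emph{The ``if'' direction.} Finiteness and geodesic invariance of $(1-u_{d+1})^{-1}(\overline{\mathcal{M}_E})_*\mu$ do not follow from a bounded Jacobian; that density is infinite for any measure charging the collision fiber. They follow from invariance under $\overline{\Xi_H^t}$ via the orbitwise substitution $dt=p_0^{-3}(1-u_{d+1})\,ds$, which is exactly Lemma \ref{lem:invsig}. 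The same lemma is what makes rigorous, in both directions, your heuristic that $f\nu$ is invariant under $f^{-1}X$. That heuristic alone is not enough because the generator is singular on $S^*_{\mathsf{NP}}\mathbb{S}^d$.
\item \emph{Choice of semiclassical parameter.} With $h_j=\hbar_jp_0$, the $u_j$ are only $o(1)$-quasimodes when $E_{\hbar_j}\to E$ slowly, not $o(h)$. Use $k_j=(N_j(N_j+d-1))^{-1/2}$ instead, for which they are exact eigenfunctions.
\item \emph{Normalization.} Your weighted and unweighted norms of $u_j$ agree because $|u_j|^2$ is even and $u_{d+1}$ is odd; you need this to call $\nu$ a probability measure.
\end{itemize}
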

\begin{rem}
Although the class $S_{\overline{\Sigma_E}}$ is rather
technical, it does contain many familiar symbols in $S(1)$. For
example, using the condition \eqref{eq:ainv}, one can check that
it contains symbols that are constant near $\xi \to \infty, x
\to 0$, as well as certain symbols of one variable, like the
Schwartz spaces
$\mathcal{S}(\mathbb{R}_x^d),\mathcal{S}(\mathbb{R}_{\xi}^d)$.
\end{rem}
The strategy of the proofs of these theorems is to directly use
the Moser-Fock correspondence in the quantum expectations, thereby essentially reducing the above theorems to foundational results of semiclassical measures of eigenfunctions on the sphere \cite[Theorem 1.1]{JZ99}. We prove that in the semiclassical limit, the peculiar map $\langle \hbar D/\sqrt{-2E_{\hbar}} \rangle$ of the Fock map \eqref{eq:FOCK} accounts for the classical reparametrization \eqref{eq:timechange}. The remaining maps of \eqref{eq:FOCK} work well with the symmetries of Weyl quantization and correspond to the maps of \eqref{eq:MOSER} in the natural way. The map $\mathsf{S}$ in \eqref{eq:MOSER} corresponds to scaling the semiclassical parameter $\hbar$ to $\hbar/p_0$. In the proof, a technical issue arises extending an operator natural on $\mathbb{S}_{\neq \mathsf{NP}}^d$ to an operator $\mathbb{S}^d$, which is handled by Lemma \ref{lem:ext2} (see the weaker Lemma \ref{lem:ext1} as well).
\par In general, it is hard to characterize the set of all
semiclassical measures $\mu$ for a given operator.  In the non-chaotic setting, the set of semiclassical measures has been completely characterized in a handful of smooth settings; see the introductions of \cite{A20,AM22} for accounts of the literature. In the chaotic setting, the set of semiclassical measures is \textit{almost} characterized by the quantum ergodicity theorem. One instance of this theorem is the following: if $M$ is
a compact, smooth Riemannian manifold without boundary such that
cogeodesic flow is ergodic with respect to the Liouville
measure, then any orthonormal sequence of $1$-eigenfunctions of the
semiclassical Laplace-Beltrami operator of $M$ admits a density-one subsequence that converges to the
Liouville measure in the sense of semiclassical measures as
$\hbar \to 0$ (see \cite{S74a,S74b,L93,Z87,CdV85} for the
original works and \cite{D22} for an exposition of the results
in the chaotic setting). The quantum unique ergodicity
conjecture states that it is not necessary to descend to a
density-one subsequence and thus completely characterizes the set
of semiclassical measures in this setting, but this conjecture
is still open.
\par There are relatively few results on semiclassical measures
in singular settings. Recently, Galkowski and Wunsch proved that the standard propagation theorem of semiclassical measures of Schr\"odinger operators continues to hold in cases when the potential can be rather rough \cite{GW24}. To the author's knowledge, the first result characterizing the semiclassical measures in a singular setting is the recent work \cite{L23}. Other results include uniform non-concentration estimates of semiclassical measures of one-dimensional Schr\"o\-ding\-er operators found in \cite{HM23}, and very recent work of \cite{V26} studying semiclassical measures of Dirac-delta perturbations of $-h\Delta_{\mathbb{S}^2}$.
\par The implications of the regularized Hamiltonian flow on the quantum dynamics of Schr\"o\-ding\-er operators with Coulomb-like potentials have also been well-studied. G\'{e}rard and Knauf in \cite{GK91} showed that the semiclassical wavefront set of solutions $u_{\hbar}(t)=e^{-it \widehat{H}_{\hbar}/\hbar}u_{\hbar,0}, u_{\hbar,0} \in L^2(\mathbb{R}^3)$ of the time-dependent Schr\"{o}dinger equation propagates along regularized Hamiltonian orbits, including beyond the collision time. Additionally, Keraani in \cite{K05} showed the analogous statement for the propagation of semiclassical measures initially supported away from the origin. These papers regularize the Hamiltonian flow through the Kustaanheimo-Stiefel (KS) transformation. The KS map reduces this three-dimensional Hamiltonian flow to a suitably constrained four-dimensional harmonic oscillator flow (see the original works of \cite{K64,KS65} as well as the book \cite{SS71}), and it is the three-dimensional generalization of the one-dimensional and two-dimensional regularizations of the Kepler problem known to Euler \cite{E74} and Levi-Civita \cite{LC20}, respectively. Although the KS transformation has proven to be a powerful tool as exhibited in the aforementioned \cite{GK91,K05} and other work such as \cite{CJK08}, it has several drawbacks. The inverse KS map is only locally defined by introducing a dummy variable defined on the circle, the KS map also has no obvious generalization to dimensions higher than three, and, to the author's knowledge, it has no obvious `quantization' that relates the spectrum of the four-dimensional harmonic oscillator to that of the Coulomb operator. We note that the unitary Fock map has a satisfactory answer to these three defects, and we use these additional properties in this article.
\par The point of this article is to complete the work of
\cite{L23} and prove that \textit{all} semiclassical measures
are probability measures on the energy hypersurface invariant under
the Moser-regularized flow. Our proof generalizes to more
general symbols allowing mass near $x\to0,\xi \to \infty$. In
the case of sequences of eigenfunctions of the exact Coulomb
operator, we extend the results of \cite{K05} and
prove the open problem remarked in \cite[Remark
1.11]{K05}. In the case of semiclassical measures $\mu$ of eigenfunctions, Keraani proves
\begin{equation}\label{eq:keraani}
\mu|_{x \neq
  0}=(\overline{\Xi_{H}^t})_*(\mu|_{T^*\mathbb{R}_{\neq
    0}^3\setminus \operatorname{Coll}_t}), 
\end{equation}
where $\overline{\Xi_H^{\bullet}}$ is the Moser-regularized flow defined in \eqref{eq:reghamil}, and $\operatorname{Coll}_t$ is the set of initial points whose Moser-regularized flow hits $x=0$ at time $t$ (the KS flow and Moser flow coincide for exact Coulomb in $d=3$). The main theorem of this paper removes the exclusion of the collision points and analyzes the flow near $x=0$, all while working in dimension $d \geq 3$.

\subsection{Acknowledgments}
The author would like to thank Antoine Prouff, Andr\'as Vasy, and Jared Wunsch for helpful conversations. 

 \subsection{Background: The Moser Map}
We define the classical Moser map, first defined by Moser in \cite{M70} for $d=3$ (see \cite[\S1.2.2.]{L25} for proofs in general dimensions). This map regularizes the incomplete Kepler flow by mapping the (regularized) Hamiltonian orbits on a compactified $\Sigma_E$ to the geodesics of $S^*\mathbb{S}^d$.
  We use the notation
 $$\mathbb{S}_{\neq \mathsf{NP}}^d\coloneqq \mathbb{S}^d\setminus \{\mathsf{NP}\},\quad \mathsf{NP} \coloneqq (0,0,\ldots,1),$$
 to denote the sphere punctured at the `north pole.'
Let $\omega: \mathbb{R}^d \to \mathbb{S}_{\neq \mathsf{NP}}^d$
be inverse of stereographic projection from the north pole. That
is, the maps $\omega: \mathbb{R}^d \to \mathbb{S}_{\neq \mathsf{NP}}^d$ and $\omega^{-1}:\mathbb{S}_{\neq \mathsf{NP}}^d\to \mathbb{R}^d$ are given by
\begin{equation}\label{eq:stereo}
\omega(x) \coloneqq \frac{1}{|x|^2+1}\begin{cases}
2x_k & \text{if }k<d+1\\

|x|^2-1 &\text{if }k=d+1
\end{cases}, \quad \omega^{-1}(u)_j=\frac{u_j}{1-u_{d+1}},\ j=1,\ldots,d.
\end{equation}
It can be computed that the pullback $\omega^*:T^*\mathbb{R}^d \to T^*(\mathbb{S}_{\neq \mathsf{NP}}^d)$ is
\begin{align}
\omega^*(x,\xi)=(\omega(x),\eta) \quad \text{with} \quad \eta_j =\begin{cases}
           \xi_j \frac{|x|^2+1}{2}-(x\cdot \xi) x_j &\text{ if }
                                               j<d+1\\
           x \cdot \xi &\text{ if }
                                               j=d+1
                                                                  \end{cases}\label{eq:mos},
\end{align}
where we have identified $T^* \mathbb{R}^d\cong T
\mathbb{R}^d=\mathbb{R}_x^d \times \mathbb{R}_{\xi}^d$ and
$T^*(\mathbb{S}_{\neq \mathsf{NP}}^d)\cong T(\mathbb{S}_{\neq
  \mathsf{NP}}^d)\subset T \mathbb{R}^{d+1}=\mathbb{R}_u^{d+1}
\times \mathbb{R}_{\eta}^{d+1}$ with the musical isomorphisms
induced by the respective Riemannian metrics.

\begin{theo}[{{\cite[Theorem 1]{M70}}}]\label{theo:moser} Let $E<0$ and define $p_0 \coloneqq
          \sqrt{-2E}$. Define the Moser map
          \begin{equation}\label{eq:MOSER}
\mathcal{M}_E:T^*\mathbb{R}^d \to T^*(\mathbb{S}_{\neq \mathsf{NP}}^d),\quad \mathcal{M}_E \coloneqq   \omega^* \circ R_{-\pi/2}
          \circ \mathcal{D}_{p_0}\circ \mathsf{S},\quad \begin{array}{c} \mathcal{D}_{p_0}(x,\xi) \coloneqq (p_0 x
                                                           ,p_0^{-1}\xi),\\ R_{-\pi/2}(x,\xi) \coloneqq (\xi,-x),\\ \mathsf{S}(x,\xi)
          \coloneqq (p_0x,\xi),
                                                           \end{array}
\end{equation}
 where $\mathcal{D}_{p_0}$ is the symplectic dilation by $p_0$,
          $R_{-\pi/2}$ is the
          symplectic rotation by $-\pi/2$, $\mathsf{S}$ is a nonsymplectic dilation, and $\omega^*$ is the pullback of $\omega$, found in \eqref{eq:mos}.
Up to a reparametrization of time, the Moser map $\mathcal{M}_E$ transforms the Kepler flow on $\Sigma_E$ onto the cogeodesic flow on $S^*(\mathbb{S}_{\neq \mathsf{NP}}^d)$ parametrized by arc length. \par More specifically, 
if $\gamma(t)=(x(t),\xi(t)) \in T^*\mathbb{R}^d$ is a Kepler orbit on $\Sigma_E$, then $\varphi(s)=(u(s),\eta(s)) \coloneqq \mathcal{M}_E(\gamma(t(s)) \in S^*(\mathbb{S}_{\neq \mathsf{NP}}^d)$ is a 
cogeodesic on $S^*(\mathbb{S}_{\neq \mathsf{NP}}^d)$ parametrized by arc length $s$ where
$t(s)$ satisfies
\begin{equation}\label{eq:timechange}
\frac{dt}{ds}=\frac{|x(t(s))|}{p_0}=\frac{1-u_{d+1}(s)}{p_0^3},\quad t(0)=0.
\end{equation}
\end{theo}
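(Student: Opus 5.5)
The plan is to verify Theorem \ref{theo:moser} by tracking the Kepler Hamiltonian through the three linear steps of $\mathcal{M}_E$. A Hamiltonian flow transported by a conformally symplectic map is again Hamiltonian, after rescaling the Hamiltonian or the time. The only genuinely nonlinear step is $\omega^*$, and there a direct computation with \eqref{eq:mos} will identify the geodesic Hamiltonian.

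\emph{Step 1.} First I will normalize the energy. Under $\mathsf{S}$ followed by $\mathcal{D}_{p_0}$, a point $(x,\xi)$ goes to $(X,\Xi)=(p_0^2x,\,p_0^{-1}\xi)$. On $\Sigma_E$ this gives
$$|\Xi|^2/2-\tfrac{1}{p_0^2}\cdot \tfrac{p_0^2}{|X|}\cdot\tfrac{1}{p_0^{2}}\,p_0^2= \ldots$$
More simply, $H=E$ becomes $\tfrac{|\Xi|^2+1}{2}=\tfrac{1}{|X|}$ after dividing by $p_0^2$. The map $(x,\xi)\mapsto(X,\Xi)$ is conformally symplectic with factor $p_0$: $d\Xi\wedge dX = p_0\,d\xi\wedge dx$. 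Hence Kepler orbits on $\Sigma_E$ become orbits of the Hamiltonian $\tilde H=\tfrac{|\Xi|^2}{2}-\tfrac{1}{|X|}$ at energy $-1/2$, with time scaled by $p_0^{3}$ (one power of $p_0$ from the symplectic factor, two from $\tilde H = p_0^{-2}H$).

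\emph{Step 2.} Next I apply the rotation $R_{-\pi/2}$, which gives $(y,\zeta)=(\Xi,-X)$. The energy surface becomes $\{F=0\}$ with
$$F(y,\zeta)\coloneqq \tfrac{|y|^2+1}{2}|\zeta|-1 .$$
On the level set $\{F=0\}$, the Hamiltonian vector fields of $\tilde H+\tfrac12$ and of $F$ are parallel. The proportionality factor is the ratio of their differentials on the level set, namely $|\zeta|=|X|$. This is the source of the factor $|x|$ in \eqref{eq:timechange}.

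\emph{Step 3.} The key computation is that, by \eqref{eq:mos}, $|\eta|^2=\big(\tfrac{|y|^2+1}{2}\big)^2|\zeta|^2$. Concretely, expand $|\zeta\tfrac{|y|^2+1}{2}-(y\cdot\zeta)y|^2+(y\cdot\zeta)^2$; the cross terms cancel. This identity is simply the statement that $\omega$ is conformal with factor $2/(|y|^2+1)$. It follows that $F=|\eta|_{g_{\mathbb{S}^d}}-1$ pulled back by the symplectomorphism $\omega^*$. The Hamiltonian flow of $|\eta|$ on $\{|\eta|=1\}$ is the unit-speed cogeodesic flow. This gives $S^*(\mathbb{S}^d_{\neq\mathsf{NP}})$ as the image, with arc-length parametrization.

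\emph{Step 4.} Finally I combine the time factors. Before I can cite the proportionality of vector fields in Step 2, I have to check it on the level set, using $\{\tilde H,\cdot\}$ versus $\{F,\cdot\}$ together with $|\zeta|=|X|$ there. Combining this with the $p_0$ scalings of Step 1 gives $dt/ds=|x|/p_0$. The second equality in \eqref{eq:timechange} follows from \eqref{eq:stereo}: $1-u_{d+1}=2/(|y|^2+1)=|\zeta|=p_0^2|x|$, by the energy relation of Step 1.

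The main obstacle is the bookkeeping of the constant factors $p_0$. In particular, the nonsymplectic $\mathsf{S}$ has to be tracked as a rescaling of the Hamiltonian rather than of the symplectic form. I would settle this by checking the final formula against Kepler's third law: a full great circle has length $2\pi$, and integrating $dt/ds=(1-u_{d+1})/p_0^3$ over the circle must give $T=2\pi/p_0^3$.
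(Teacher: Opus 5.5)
Your proposal is correct and is essentially Moser's original argument. The paper does not reproduce that argument; it only invokes it through \cite[Theorem 1]{M70} and \cite[\S1.2.2.]{L25}. The pieces all check out: the conformally symplectic normalization to energy $-\tfrac12$ (a factor $p_0^3$ in time), the swap $R_{-\pi/2}$ with the exact identity $\tilde H+\tfrac12=F/|\zeta|$ (a factor $|\zeta|=p_0^2|x|$), and the conformality identity $|\eta|=\tfrac{|y|^2+1}{2}|\zeta|$ identifying $F$ with the pullback of $|\eta|_g-1$. Together these give $dt/ds=|x|/p_0=(1-u_{d+1})/p_0^3$, consistent with \eqref{eq:sympl} and \eqref{eq:timechange3}. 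The only blemish is the garbled first display in Step 1, which you should simply replace by $\tilde H(X,\Xi)=p_0^{-2}H(x,\xi)$.
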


\begin{figure}[!htbp]
\centering
\begin{subfigure}{.5\textwidth}
  \centering
  \includegraphics[width=.7\linewidth]{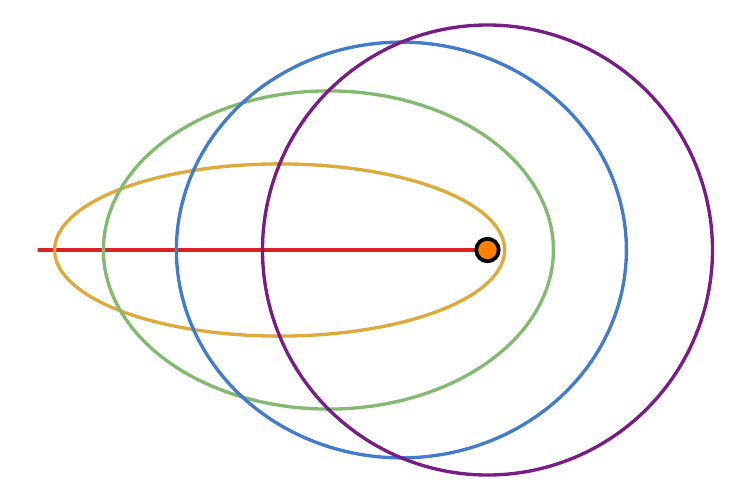}%
  \caption{Position graphs}\label{fig:1a}
\end{subfigure}%
\begin{subfigure}{.5\textwidth}
  \centering
  \includegraphics[width=.6\linewidth]{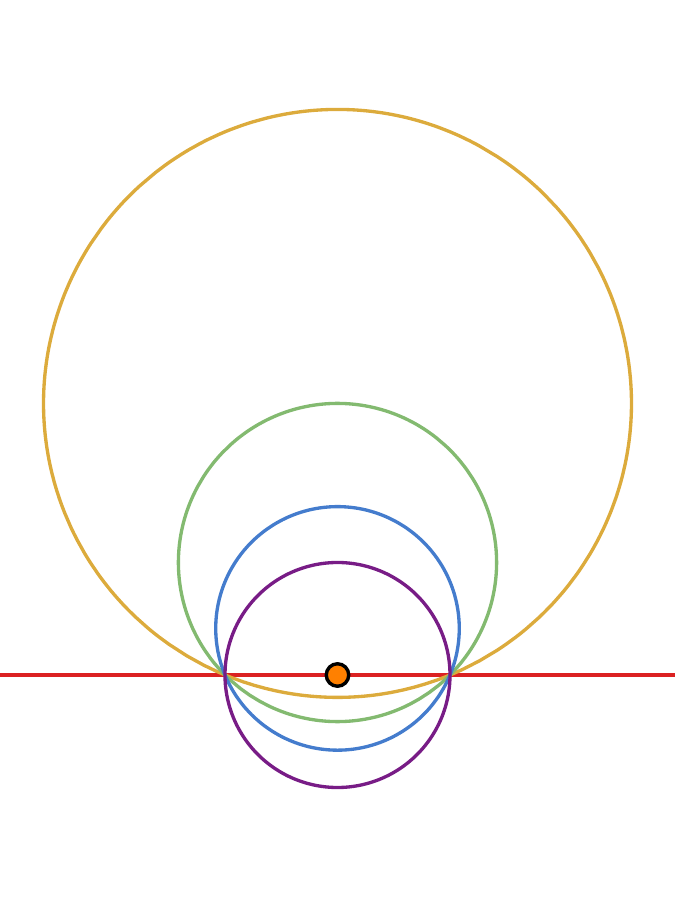}%
  \caption{Momentum graphs}\label{fig:1b}
  \end{subfigure}%
  \caption{Position and momentum graphs when $E=-1/2$ \& $0 \leq |L| \leq 1 $}\label{fig:1}
  \end{figure}
  \begin{figure}[!htbp]
\centering
  \includegraphics[width=.33\linewidth]{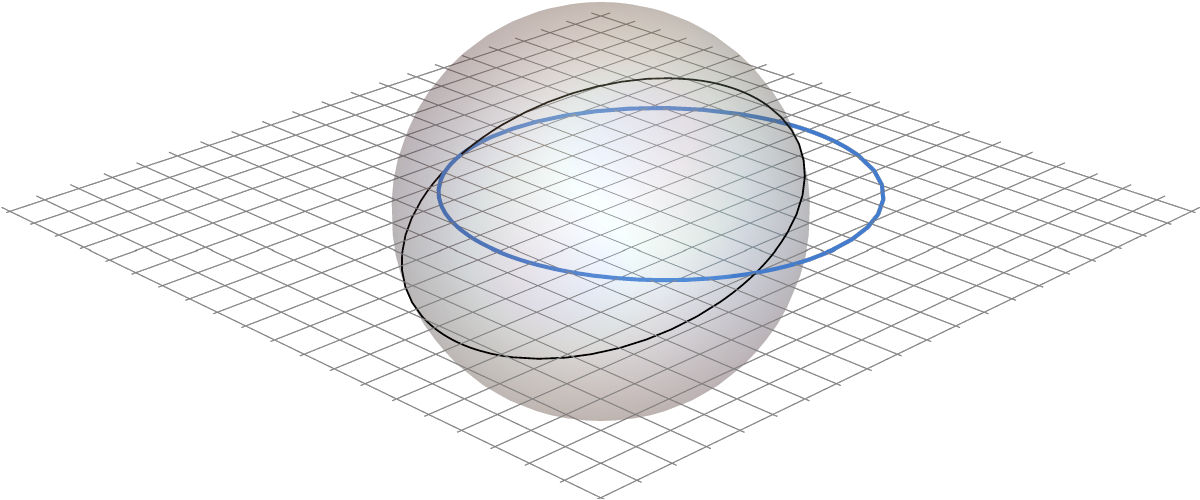}%
  \includegraphics[width=.33\linewidth]{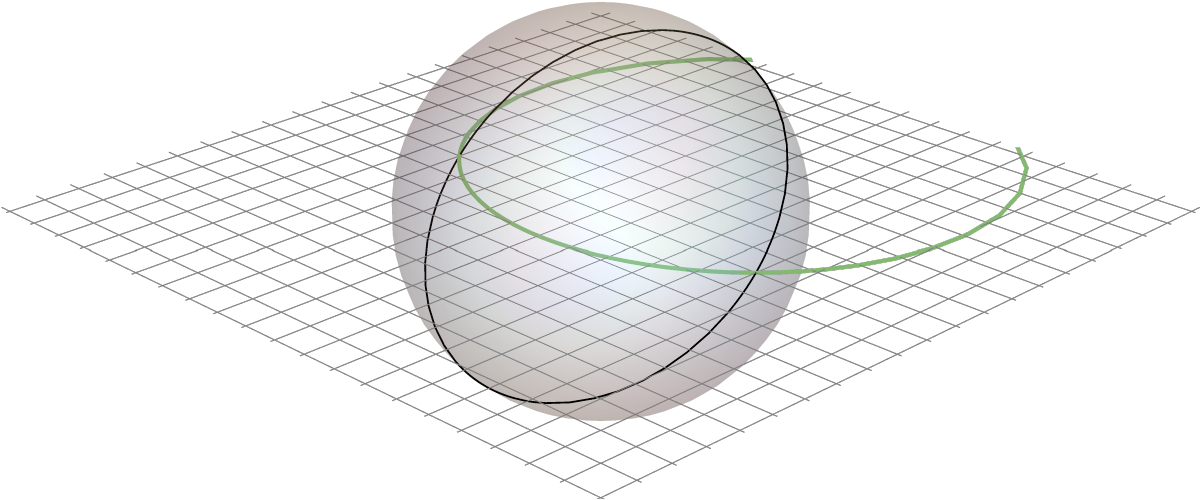}%
  \includegraphics[width=.33\linewidth]{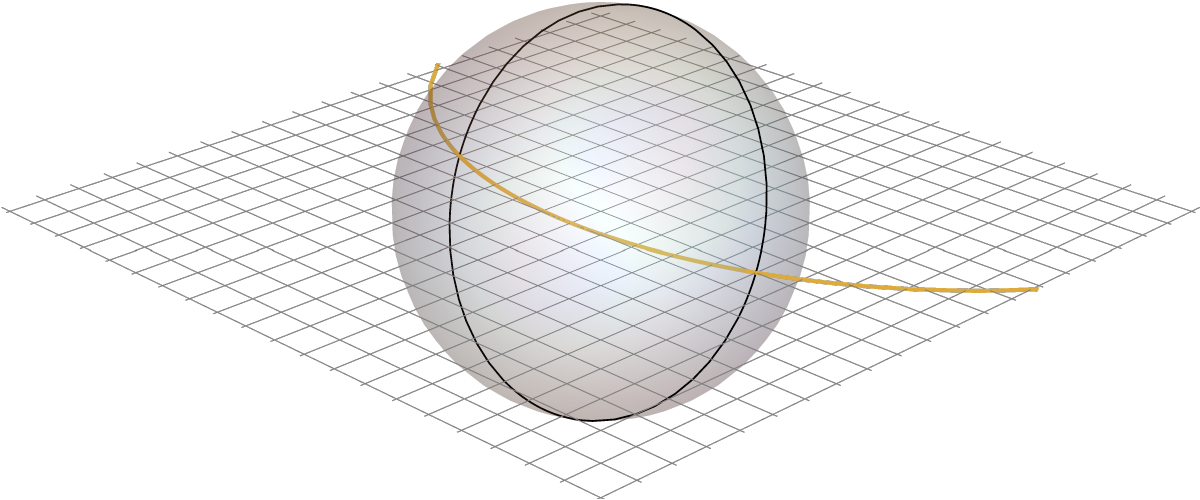}%
\caption{Snapshots of momentum graphs as $|L|$ varies from $1$ to $0$}\label{fig:2}
  \end{figure}

\begin{rem} The initial scaling maps $\mathsf{S}$ and $\mathcal{D}_{p_0}$
are compatible with the inhomogeneity of $H$ as follows:
$$H\big((\mathsf{S} \circ
\mathcal{D}_{p_0})(x,\xi)\big)=H(p_0^2x,p_0^{-1}\xi)=p_0^{-2}H(x,\xi). $$
On the first reading, one may set $p_0=1$ and thus make $\mathsf{S}=\mathcal{D}_{p_0}=I$. We caution that this map (and its quantization) \textit{crucially} depends on the energy, so this simplification is rather dangerous. The appearance of $\omega^*$ and $R_{-\pi/2}$ can be seen visually: compare the momentum graphs in Figure \ref{fig:1b} to the stereographic projection of great circles in Figure \ref{fig:2}.
\end{rem}
\begin{rem}[Symmetries]
One can compute
\begin{equation}\label{eq:sympl}
\mathcal{M}_E^*(\eta\cdot du)=-p_0x\cdot d\xi
\end{equation}
where $\eta\cdot du$ denotes the tautological 1-form on $T^*\mathbb{R}^{d+1}$ restricted to $T^*(\mathbb{S}_{\neq \mathsf{NP}}^d)$, which implies $\mathcal{M}_E$ is a (conformal) symplectomorphism. Additionally, the functions $u_j\eta_k-u_k\eta_j$ on $T^*(\mathbb{S}_{\neq \mathsf{NP}}^d)$ pulled back by $\mathcal{M}_E$ can be computed as
\begin{align}
\mathcal{M}_E^*(u_j\eta_k-u_k\eta_j)&=p_0(x_j\xi_k-x_k\xi_j),\quad
                          j,k \neq d+1, \label{eq:preang}\\
  \mathcal{M}_E^*(u_j\eta_{d+1}-u_{d+1}\eta_j)&=\frac{|\xi|^2-p_0^2}{2}x_j-(x \cdot \xi)\xi_j,\quad
                          j \neq d+1 \label{eq:prelenz}.
\end{align}
That is, \eqref{eq:preang} states that $\mathcal{M}_E$ pulls back the components of angular
momentum not involving the last coordinate in $\mathbb{R}^{d+1}$
to \textit{all} the (scaled) components of angular momentum in
$\mathbb{R}^d$. Put differently, for $g \in
\operatorname{SO}(d)$, we have
\begin{equation}\label{eq:later}
\mathcal{M}_E \circ g^*=\begin{pmatrix} g & 0\\ 0 & 1\end{pmatrix}^* \circ \mathcal{M}_E,
\end{equation}
where the asterisk denotes the symplectic lift of the rotation action on the base manifold to the cotangent bundle. 
\par To further understand \eqref{eq:prelenz}, we
first observe that one can check
$\mathcal{M}_E|_{\Sigma_E}=S^*(\mathbb{S}_{\neq
  \mathsf{NP}}^d)$. On $\Sigma_E$, the right hand side of
\eqref{eq:prelenz} coincides with the components of the Runge-Lenz vector $R$ in
\eqref{eq:rungelenz}.\par
We conclude this remark by noting that \eqref{eq:sympl} implies
\begin{equation}\label{eq:sympl2}
\mathcal{M}_E^*\{f,g\}=p_0^{-1} \{f \circ \mathcal{M}_E,g \circ \mathcal{M}_E\}
\end{equation}
for any $f,g \in C^{\infty}(T^*\mathbb{R}^{d+1})$. This shows that up to a scaling, the components of angular momentum and $\widetilde{R}_E \coloneqq \frac{|\xi|^2-p_0^2}{2}x-(x \cdot \xi)\xi$ in $\mathbb{R}^d$ share the same commutation relations as all of the components of angular momentum in $\mathbb{R}^{d+1}$. Thus $\operatorname{span}_{\mathbb{R}}(L_{j,k},(\widetilde{R}_E)_j\mid j,k)$ together with the Poisson bracket forms an $\mathfrak{so}(d+1)$ Lie algebra, showing the hidden symmetry.
\end{rem}

\begin{rem}[Time Change] We note that
\eqref{eq:timechange} includes the identity
\begin{equation}\label{eq:timechange3}
\mathcal{M}_E^*(1-u_{d+1})=p_0^2|x|,
\end{equation}
which we use in the proof of the main theorem. Note that $t(s)$ can be computed explicitly. Indeed,
the geodesic on $\mathbb{S}^d$ generated by $(u_0,\eta_0) \in
S^*\mathbb{S}^d$ is
$$u(s)=u_0\cos s +\eta_0 \sin s, $$
so integrating \eqref{eq:timechange} gives
\begin{equation}\label{eq:timechange2}
t(s)=\frac{1}{p_0^3}\big(s-(u_0)_{d+1}\sin s -(\eta_0)_{d+1}\cos s+(\eta_0)_{d+1}\big).
\end{equation}
Now $t(s)$ is increasing since $t'(s) \geq 0$ and $t'(s)=0$ only at the discrete, periodic points $s$ where $u_{d+1}(s)=1$. Substituting $s=2\pi$ into \eqref{eq:timechange2} recovers Kepler's third law \eqref{eq:kepler3}.
\end{rem}
\begin{rem}[Collision Orbits]\label{rem:collision}
In the case of a collision orbit (that is, when $L=0$), by
\eqref{eq:preang}, we see that the corresponding geodesic on
$\mathbb{S}_{\neq \mathsf{NP}}^d$ has zero angular momentum in
the directions not involving the last coordinate. That is, the
collision Kepler orbits correspond to the great circle geodesics
terminating at $\mathsf{NP}$, the north pole.

The collision time can be computed explicitly. On the sphere side, suppose $(u_0,\eta_0) \in S^*(\mathbb{S}_{\neq \mathsf{NP}}^d)$
generates a great circle $\varphi_0(s)$ through $\mathsf{NP}$,
and let $s_*>0$ denote the smallest $s \in (0,2\pi]$ satisfying
$\varphi_0(s)=\mathsf{NP}$. Explicitly,
$$s_* \coloneqq  \operatorname{Arg}\big((u_0+i \eta_0)_{d+1}\big) \in (0,2\pi].$$
 The collision time of the corresponding collision orbit $\gamma_0$ is $t_* \coloneqq t(s_*)$, where $t(\bullet)$ is defined in \eqref{eq:timechange2}.                 
\end{rem}
Moser's regularization adds the point $\mathsf{NP}$ along with the missing co-fiber to
$S^*(\mathbb{S}_{\neq \mathsf{NP}}^d)$ and thus compactifies
$\Sigma_E$ (see Figure \ref{fig:5}). In order to do this rigorously, we `patch' the
behavior at the south pole to the north pole. Defining
$\mathsf{SP} \coloneqq -\mathsf{NP}=(0,0,\ldots,-1)$, observe the
diagram
\begin{equation}\label{eq:newdiagram}
\begin{tikzcd}
	{(T^*\mathbb{R}^d)\setminus \{\xi=0\}} & {T^*(\mathbb{S}_{\neq \mathsf{SP},\mathsf{NP}}^d)} \\
	{(T^*\mathbb{R}^d)\setminus \{\xi=0\}} & {T^*(\mathbb{S}_{\neq \mathsf{SP},\mathsf{NP}}^d)}
	\arrow["{\mathcal{M}_E}", from=1-1, to=1-2]
	\arrow["{\mathcal{I}_E}"', from=1-1, to=2-1]
	\arrow["{\mathcal{N}}", from=1-2, to=2-2]
	\arrow["{\mathcal{M}_E}", from=2-1, to=2-2]
  \end{tikzcd}\end{equation}
  commutes, where
  \begin{align*}
    \mathcal{N}(u,\eta) \coloneqq &\ (-u,-\eta),\qquad \mathcal{I}_E
                                \coloneqq \mathcal{D}_{p_0^{-2}}\circ
                     R_{-\pi/2}\circ \iota^* \circ
                                R_{-\pi/2},\\
    \iota(x) \coloneqq &\ \frac{x}{|x|^2},\qquad \iota^*(x,\xi) \coloneqq \Big(\frac{x}{|x|^2},|x|^2\xi-2(x\cdot \xi)x\Big).
\end{align*}
Explicitly,
\begin{equation}\label{eq:energyinversion}
\mathcal{I}_E(x,\xi)=\Big(p_0^{-2}\big(-|\xi|^2x+2(x\cdot \xi)\xi \big) ,-p_0^2\frac{\xi}{|\xi|^2}\Big).
\end{equation}
It is easy to see from \eqref{eq:newdiagram} that $\mathcal{I}_E$ is an involution and it takes the set $\Sigma_E \setminus \{(x,0) \colon
|x|=2p_0^{-2}\}$ to itself. Now we define the compactification of $\Sigma_E$:

\begin{defn}\label{def:moser2}
For $E<0$, define Moser's compactified energy hypersurface $\overline{\Sigma_E}$ by
\begin{equation}\label{eq:compact}
\overline{\Sigma_E} \coloneqq \big((\Sigma_E)^{(0)} \sqcup (\Sigma_E)^{(1)}\big)/\sim, \qquad  \big(\mathcal{I}_E(x,\xi),0\big)\sim\big((x,\xi),1\big)\text{ for } \xi \neq 0,
\end{equation}
where $\mathcal{I}_E$ is defined in \eqref{eq:energyinversion} (see Figure \ref{fig:5}). Define
the regularized Moser map $\overline{\mathcal{M}_E}:\overline{\Sigma_E}\to
S^*\mathbb{S}^d$ by
\begin{equation}\label{eq:compactmoser}
\begin{aligned}
\overline{\mathcal{M}_E}\big( (x,\xi),0 \big) &\coloneqq
                                                \mathcal{M}_E(x,\xi),\\
  \overline{\mathcal{M}_E}\big( (x,\xi),1 \big) &\coloneqq
  \mathcal{M}_E\big(\mathcal{I}_E(x,\xi)\big),\text{ when }\xi \neq 0,\\
  \overline{\mathcal{M}_E}\big((x,0),1\big) &\coloneqq
  \big(\mathsf{NP},(2^{-1}p_0^2x,0)\big),\text{ where }|x|=2/p_0^2.
  \end{aligned}
  \end{equation}
  \end{defn}

 \begin{figure}[!htbp]
\centering
\begin{subfigure}{.45\textwidth}
\centering
  \includegraphics[width=0.75\linewidth]{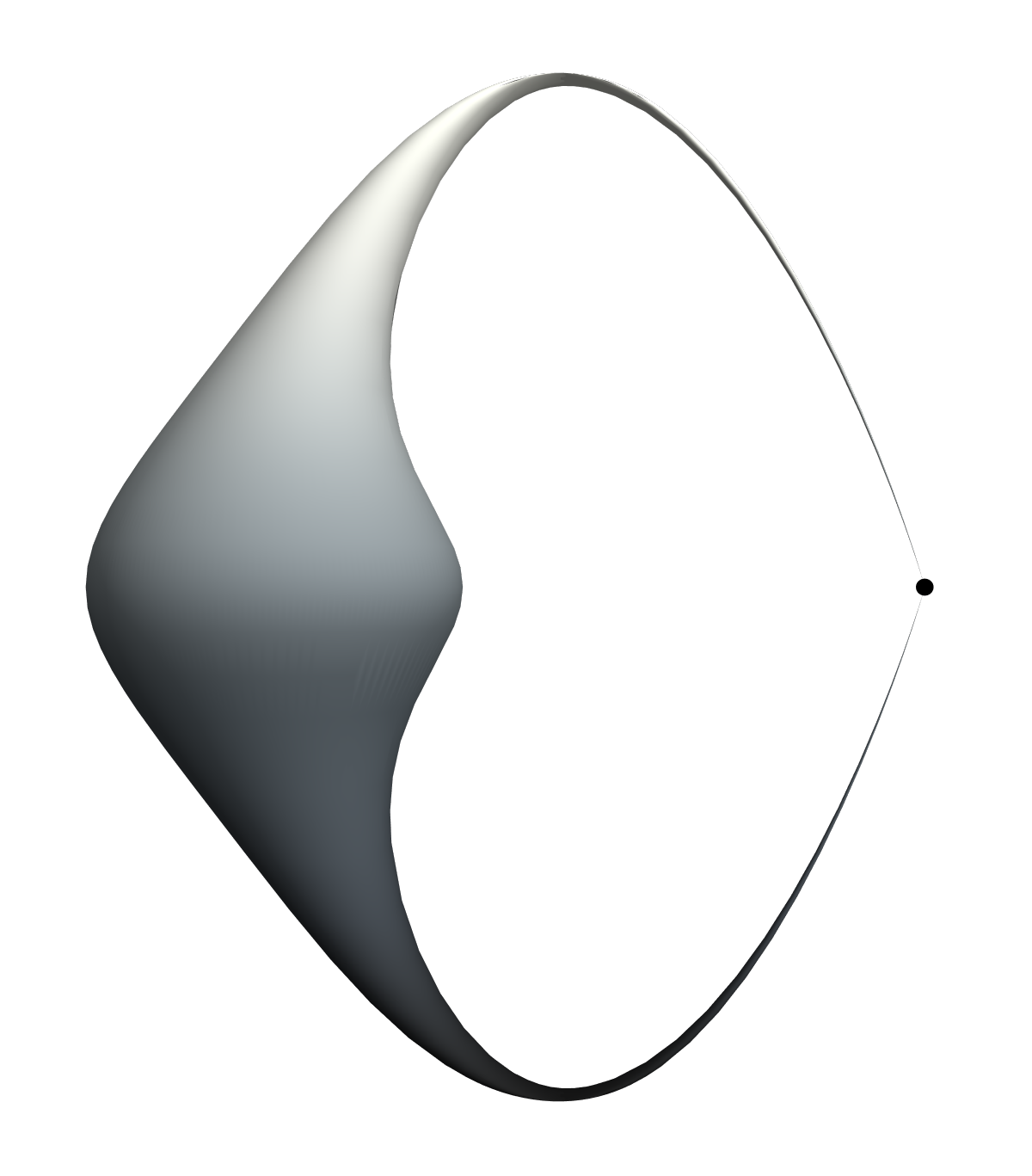}%
  \caption{Compactification of $\Sigma_E$ at the `point' at infinity in Figure \ref{fig:4b}}\label{fig:5a}
  \end{subfigure}%
  \hfill
\begin{subfigure}{.45\textwidth}
  \centering
  \includegraphics[width=.75\linewidth]{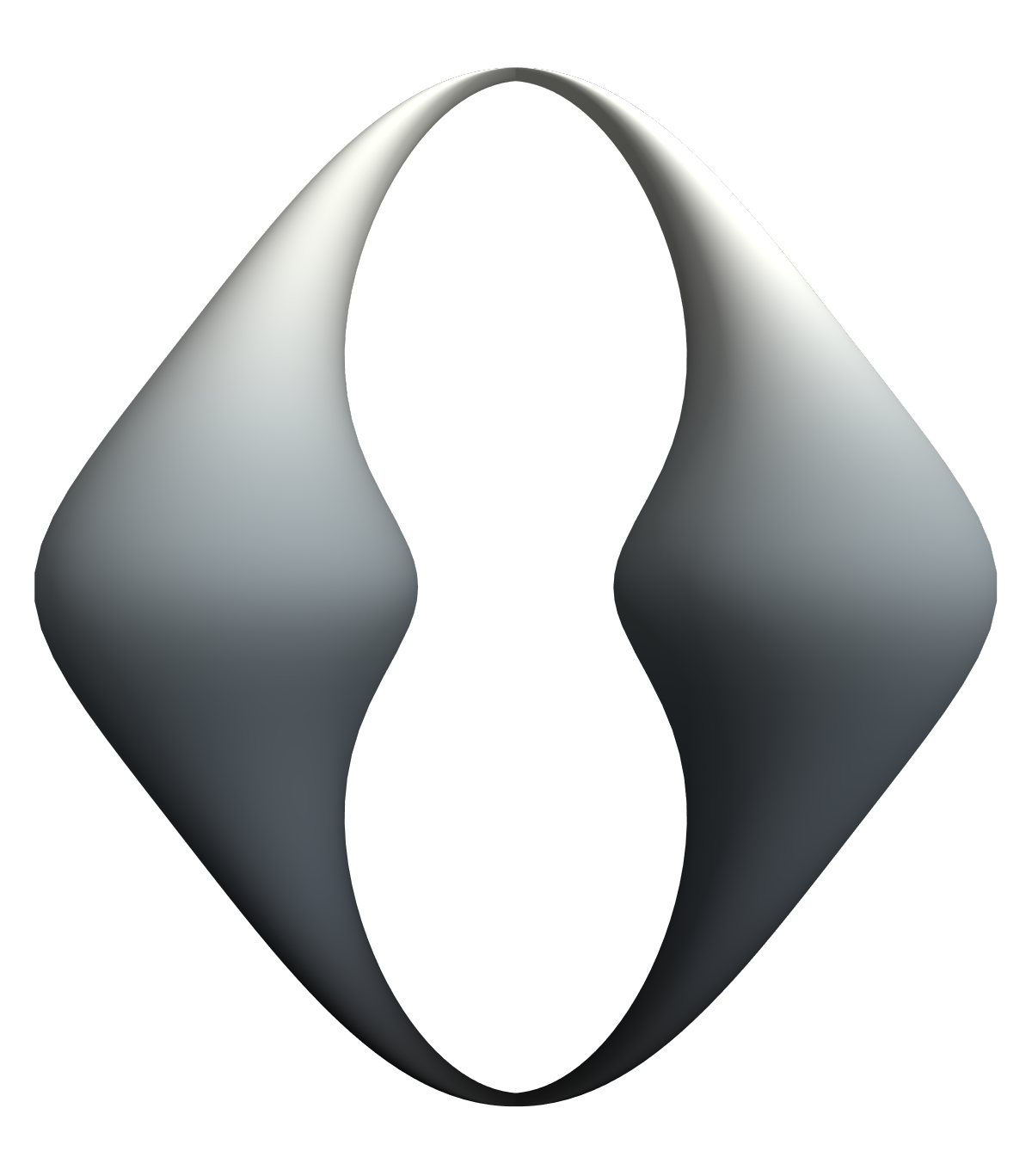}%
  \caption{The space $\overline{\Sigma_E}$ viewed as the `blow-up' of infinity in Figure \ref{fig:5a}}\label{fig:5b}
  \end{subfigure}%
  \caption{Visualization of $\overline{\Sigma_E}$ when $E=-1/2$}\label{fig:5}%
  \end{figure}

  \begin{rem}\label{rem:moser2}
One can show $\overline{\mathcal{M}_E}$ is a smooth diffeomorphism, and we
can then define the regularized Moser flow on
$\overline{\Sigma_E}$: for any $t \in \mathbb{R}$,
define $\overline{\Xi_{H}^t}:\overline{\Sigma_E}\to
\overline{\Sigma_E}$ by
\begin{equation}\label{eq:reghamil}
\overline{\Xi_{H}^t} \coloneqq \overline{\mathcal{M}_E}^{-1}\circ \Phi_{\mathbb{S}^d}^{s(t)}\circ  \overline{\mathcal{M}_E},
\end{equation}
where $\Phi_{\mathbb{S}^d}^{\bullet}$ denotes the cogeodesic
flow on $S^*\mathbb{S}^d$ and $s(t)$ is the inverse of $t(s)$
defined in \eqref{eq:timechange}. More precisely, keeping track of the dependence on the input, the regularized flow $\overline{\Xi_{H}^t}$ is defined as 
\begin{equation}\label{eq:reghamil2}
\overline{\Xi_{H}^t}(z_0) \coloneqq (\overline{\mathcal{M}_E}^{-1}\circ \Phi_{\mathbb{S}^d}^{s_{z_0}(t)}\circ  \overline{\mathcal{M}_E})(z_0).
\end{equation}
Indeed, the time change depends on the initial point of the
orbit $z_0 \in \overline{\Sigma_E}$ (or, more explicitly,
$\overline{\mathcal{M}_E}(z_0)$), as seen from
\eqref{eq:timechange2}. Regardless, one can show the usual semigroup
property
$$\overline{\Xi_{H}^{t'+t}}=\overline{\Xi_{H}^{t'}}\circ
\overline{\Xi_{H}^{t}}.$$
We end this remark by noting that had we defined the regularized
flow without the time change
$$\Phi_H^s:\overline{\Sigma_E}\to
\overline{\Sigma_E},\quad \Phi_H^s \coloneqq
\overline{\mathcal{M}_E}^{-1}\circ \Phi_{\mathbb{S}^d}^{s}\circ
\overline{\mathcal{M}_E}, $$
we obtain a fundamentally different flow. Indeed, one can easily show that $\overline{\mu}$ is a measure on $\overline{\Sigma_E}$ invariant under $\Phi_H^{\bullet}$ if and only if $(\overline{\mathcal{M}_E })_*\overline{\mu}$ is invariant under the cogeodesic flow on $S^*\mathbb{S}^d$, but the same is \textit{not} true for the regularized Moser flow $\overline{\Xi_H^{\bullet}}$, as seen in Lemma \ref{lem:invsig}.
\end{rem}

\begin{defn}\label{def:smooth}
Define the inclusion
\begin{equation}\label{eq:inclusion}
i_{\Sigma_E}:\Sigma_E \to \overline{\Sigma_E},\quad i_{\Sigma_E}(x,\xi)=\big((x,\xi),0\big). 
\end{equation}
We say that a function $a \in
C^{\infty}(\Sigma_E)$ \textit{extends smoothly to
}$\overline{\Sigma_E}$ if there exists a function $\overline{a}
\in C^{\infty}(\overline{\Sigma_E})$ such that
$$\overline{a}\circ i_{\Sigma_E}=a.$$
\end{defn}

\begin{rem}\label{rem:incl}

If $(x,\xi) \in \Sigma_E$ is
on a non-collision orbit,
it is easy to see from definitions and Theorem \ref{theo:moser} that
$$\overline{\Xi_{H}^t}\big((x,\xi),0\big)=
i_{\Sigma_E}\big(\Xi_{H}^t(x,\xi)\big)$$
for any $t \in \mathbb{R}$, where $\Xi_{H}^t$ is the
non-regularized Hamiltonian flow.
\end{rem}

\begin{rem}\label{rem:bars}
 The function $\overline{a}$
is unique by \eqref{eq:compact}. Equivalently, $a \in
C^{\infty}(\Sigma_E)$ extends smoothly to $\overline{\Sigma_E}$
if and only if
\begin{equation}\label{eq:ainv}
(x,\xi) \mapsto
a_{\mathrm{inv}}(x,\xi) \coloneqq a\big(\mathcal{I}_E(x,\xi)\big)=a\Big(p_0^{-2}\big(-|\xi|^2x+2(x\cdot
\xi)\xi \big) ,-p_0^2\frac{\xi}{|\xi|^2}\Big)
\end{equation}
extends to a smooth function on $\Sigma_E$ over $\xi=0$. In this case, $\overline{a} \in C^{\infty}(\overline{\Sigma_E})$ equals
\begin{equation}\label{eq:barr}
\begin{aligned}
\overline{a}\big( (x,\xi),0 \big) &\coloneqq
                                               a(x,\xi),\\
  \overline{a}\big( (x,\xi),1 \big) &\coloneqq
  a_{\mathrm{inv}}(x,\xi).
\end{aligned}
\end{equation}
Examples of symbols that extend smoothly to $\overline{\Sigma_E}$ include $a \in C_c^{\infty}(T^*\mathbb{R}^d)$, as well as symbols that are Schwartz functions of $x$ (or $\xi$). Lastly, we note that in connection with the Moser map, $a \in
C^{\infty}(\Sigma_E)$ extends smoothly to $\overline{\Sigma_E}$
if and only if  $a \circ \mathcal{M}_E^{-1}\in C^{\infty}(S^*(\mathbb{S}_{\neq \mathsf{NP}}^d))$ extends smoothly to $S^*\mathbb{S}^d$.
\end{rem}

\begin{lem}\label{lem:invsig} Suppose $\overline{\mu}$ is a probability measure on
$\overline{\Sigma_E}$. Then $\overline{\mu}$ is invariant under
the Moser-regularized flow if and only if there exists a unique
probability measure $\nu$ on $S^*\mathbb{S}^d$ invariant under the
cogeodesic flow such that
\begin{equation}\label{eq:weirdmeas}
(\overline{\mathcal{M}_E})_*\overline{\mu}=(1-u_{d+1})\nu.
\end{equation}
\end{lem}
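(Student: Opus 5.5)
The idea is that the regularized flow $\overline{\Xi_H^t}$ is, after conjugation by $\overline{\mathcal{M}_E}$, a time change of the cogeodesic flow $\Phi^s_{\mathbb{S}^d}$ with speed $ds/dt = p_0^3/(1-u_{d+1})$. Heuristically, if a flow with generator $X$ preserves $\nu$, then the flow generated by $fX$ with $f>0$ preserves $f^{-1}\nu$. Here $f=p_0^3/(1-u_{d+1})$ blows up at $\mathsf{NP}$, so I would avoid vector fields and divergence computations. Instead I would work with integrated identities over one full period, using two facts:
\begin{itemize}
\item every cogeodesic is $2\pi$-periodic in $s$;
\item by \eqref{eq:timechange2}, every regularized orbit is $T=2\pi/p_0^3$-periodic in $t$.
\end{itemize}
Throughout, write $\mu' \coloneqq (\overline{\mathcal{M}_E})_*\overline{\mu}$ and $\Xi^t \coloneqq \overline{\mathcal{M}_E}\circ\overline{\Xi^t_H}\circ\overline{\mathcal{M}_E}^{-1}$ on $S^*\mathbb{S}^d$.

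\textbf{Key identity.} For a nonnegative Borel $h$ and any $w\in S^*\mathbb{S}^d$, the substitution $t=t_w(s)$ with $dt=p_0^{-3}(1-u_{d+1}(\Phi^s w))\,ds$ gives
\begin{equation*}
\int_0^T h(\Xi^t w)\,dt=\frac{1}{p_0^3}\int_0^{2\pi} h(\Phi^s w)\,\bigl(1-u_{d+1}(\Phi^s w)\bigr)\,ds .
\end{equation*}
This is valid because $t_w$ is increasing, its derivative vanishes only at isolated $s$, and $t_w(2\pi)=T$.

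\textbf{Only if.} Assume $\mu'$ is $\Xi^t$-invariant.
\begin{enumerate}
\item Averaging invariance over $t\in[0,T]$ gives $\int h\,d\mu'=\int \frac1T\int_0^T h(\Xi^t w)\,dt\,d\mu'(w)$.
\item Insert the key identity with $h=g/(1-u_{d+1})$ for $g\ge0$ continuous. To avoid dividing by zero at the pole, first use the truncation $\min(g/(1-u_{d+1}),N)$ and pass to the limit by monotone convergence. Since $Tp_0^3=2\pi$, this yields
\begin{equation*}
\int_{\{u_{d+1}\ne1\}}\frac{g}{1-u_{d+1}}\,d\mu'=\int\frac1{2\pi}\int_0^{2\pi} g(\Phi^s w)\,ds\,d\mu'(w)\le \sup g .
\end{equation*}
In particular $(1-u_{d+1})^{-1}\mu'$ is a finite measure off the fiber $F$ over $\mathsf{NP}$.
\item Define $\nu$ by the right-hand side. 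It is a probability measure (take $g=1$), and it is $\Phi^s$-invariant because it is a full-period average of $\Phi^s$-orbits.
\item Show $\mu'(F)=0$. Every point of $F$ leaves $\mathsf{NP}$ and returns only after regularized time $T$. Hence the sets $\Xi^t(F)$, $t\in[0,T)$, are pairwise disjoint, and by invariance they all have mass $\mu'(F)$. Taking countably many such $t$ forces $\mu'(F)=0$.
\item Combining steps 2 and 4 gives $\mu'=(1-u_{d+1})\nu$.
\end{enumerate}

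\textbf{Uniqueness.} Any $\nu$ satisfying \eqref{eq:weirdmeas} is determined off $F$ as $\nu=\mu'/(1-u_{d+1})$. Its mass on $F$ vanishes by the same disjointness argument, now for $\Phi^s$, $s\in[0,2\pi)$: great circles through $\mathsf{NP}$ return there only after $s=2\pi$.

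\textbf{If.} Given an invariant probability $\nu$ with $\mu'=(1-u_{d+1})\nu$, fix $\tau$ and apply invariance of $\nu$ in its period-averaged form to $g=(1-u_{d+1})\,h\circ\Xi^\tau$. By the key identity applied at the point $\Xi^\tau$-shifted along the orbit (the regularized flow is a reparametrization of the same orbit),
\begin{equation*}
\int h\circ\Xi^\tau\,d\mu'=\int\frac{p_0^3}{2\pi}\int_0^T h(\Xi^{\tau+t}w)\,dt\,d\nu(w).
\end{equation*}
By $T$-periodicity of $t\mapsto\Xi^t w$, the right-hand side does not depend on $\tau$. Setting $\tau=0$ identifies it with $\int h\,d\mu'$, so $\mu'$ is $\Xi^\tau$-invariant, hence $\overline{\mu}$ is invariant under $\overline{\Xi_H^\tau}$.

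\textbf{Main obstacle.} The delicate step is justifying the key identity uniformly in $w$, in particular for orbits through $\mathsf{NP}$, where $dt/ds=0$ and $h=g/(1-u_{d+1})$ is singular. This must be done carefully enough that the monotone convergence and the measurability of $(w,t)\mapsto\Xi^t w$ hold; the measurability follows from the semigroup property and the continuity of \eqref{eq:reghamil2}.
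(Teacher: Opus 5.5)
Your proof is correct, but it takes a different route from the paper.

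\textbf{The paper's route.} It disintegrates $\overline{\mu}$ over the orbit space $\mathcal{H}(\overline{\Sigma_E})\cong\widetilde{\textbf{Gr}}(2,d+1)$. It then argues that almost every conditional measure is flow-invariant, hence equal to normalized $dt$ on its $T$-periodic orbit. Each of these is converted to $\frac{1}{2\pi}(1-u_{d+1})\,ds$ by the change of variables $t\mapsto t(s)$, and $\nu=\int\nu_\varphi$ is reassembled. The converse is only asserted to be ``completely reversible.'' Uniqueness is proved separately in Remark~\ref{rem:invsig}, by the same disjoint-translates argument you use.

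\textbf{Your route.} You replace the disintegration by averaging the invariance identity over one common period. This gives the explicit formula $\nu=\int \frac{1}{2\pi}\int_0^{2\pi}\delta_{\Phi^s w}\,ds\,d\mu'(w)$, using only Tonelli, a one-dimensional change of variables for each fixed $w$, and monotone convergence.

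\textbf{Comparison.} Your argument is more elementary and more self-contained:
\begin{itemize}
\item you need neither the disintegration theorem, nor the manifold structure of the orbit space, nor the almost-everywhere invariance of the conditional measures;
\item the collision fiber $F$ is handled explicitly, whereas in the paper it is handled only implicitly (an orbit meets it at a single instant per period);
\item your ``if'' direction is an actual computation rather than an appeal to reversibility.
\end{itemize}
The shortcut works because of the same fact the paper relies on: every regularized orbit has the same period $T=2\pi/p_0^3$ and every cogeodesic has period $2\pi$, so averaging over $[0,T]$ already is the orbit average. The paper's approach is more conceptual, since it exhibits the correspondence orbit by orbit.

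\textbf{A small simplification.} Your step~4 is redundant. In step~2, let the truncation equal $N$ on $F$ and take $g\equiv 1$. Since $h_N(1-u_{d+1})\le 1$ everywhere, the right-hand side is at most $1$, while the left-hand side is at least $N\mu'(F)$. Letting $N\to\infty$ gives $\mu'(F)=0$ directly.
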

\begin{rem}\label{rem:invsig}
We first observe that $(1-u_{d+1})\nu$ is a probability measure on $S^*\mathbb{S}^d$ when $\nu$ is a probability measure on $S^*\mathbb{S}^d$ invariant under the
cogeodesic flow. Indeed, by flowing by $\pi$,
$$\int_{S^*\mathbb{S}^d}u_{d+1}d\nu=\int_{S^*\mathbb{S}^d}u_{d+1}d(\Phi^{\pi})_*\nu=\int_{S^*\mathbb{S}^d}(-u_{d+1})d\nu
\implies \int_{S^*\mathbb{S}^d}u_{d+1}d\nu=0. $$
The uniqueness statement also follows from flow invariance. Suppose $\nu'$ is another probability measure on $S^*\mathbb{S}^d$ invariant under the
cogeodesic flow such that
$$(1-u_{d+1})\nu=(1-u_{d+1})\nu'.$$
We claim $\nu=\nu'$. Indeed, it suffices to show
$\nu(S_{\mathsf{NP}}^*\mathbb{S}^d)=\nu'(S_{\mathsf{NP}}^*\mathbb{S}^d)=0$
since
$$\nu|_{S^*\mathbb{S}^d \setminus
  S_{\mathsf{NP}}^*\mathbb{S}^d}=\nu'|_{S^*\mathbb{S}^d \setminus
  S_{\mathsf{NP}}^*\mathbb{S}^d},$$
due to the fact $1-u_{d+1}>0$ on this restriction. To prove
$\nu(S_{\mathsf{NP}}^*\mathbb{S}^d)=\nu'(S_{\mathsf{NP}}^*\mathbb{S}^d)=0$,
we use the flow invariance. For distinct $s_1,\ldots, s_L \in
(0,\pi)$, we see that the sets
$\Phi^{s_j}(S_{\mathsf{NP}}^*\mathbb{S}^d)$ are pairwise disjoint because
they each lie in the pairwise disjoint level sets
$\{u_{d+1}=\cos s_j\}$. By flow invariance,
$\nu(S_{\mathsf{NP}}^*\mathbb{S}^d)=\nu(\Phi^{s_j}(S_{\mathsf{NP}}^*\mathbb{S}^d))$,
so we have
$$1 =\nu(S^*\mathbb{S}^d) \geq \nu \Big(
\bigsqcup_{j=1}^L\Phi^{s_j}(S_{\mathsf{NP}}^* \mathbb{S}^d)\Big)=\sum_{j=1}^L\nu(\Phi^{s_j}(S_{\mathsf{NP}}^*\mathbb{S}^d))=L\cdot\nu(S_{\mathsf{NP}}^*\mathbb{S}^d). $$
Letting $L \to \infty$ gives $\nu(S_{\mathsf{NP}}^*\mathbb{S}^d)=0$. The same proof works for $\nu'$.
\end{rem}
\begin{proof}[Proof of Lemma \ref{lem:invsig}]
Suppose $\overline{\mu}$ is a probability measure on
$\overline{\Sigma_E}$ invariant under
the Moser-regularized flow. For $b \in C(S^*\mathbb{S}^d)$,
\begin{align*}
\int_{S^*\mathbb{S}^d}b\ d(\overline{\mathcal{M}_E})_*\overline{\mu} &=\int_{\overline{\Sigma_E}}b\circ \overline{\mathcal{M}_E} d\overline{\mu}.
\end{align*}
We now view this integral as an integral over oriented
regularized Moser orbits. Define
$\mathcal{H}(\overline{\Sigma_E})=\overline{\Sigma_E}/\sim$
where we quotient out by points on the same (oriented)
regularized Moser orbit. By \eqref{eq:reghamil2},
$$\mathcal{H}(\overline{\Sigma_E}) \cong (S^*\mathbb{S}^d)/\mathbb{S}^1=\operatorname{SO}(d+1)/(\operatorname{SO}(2) \times \operatorname{SO}(d-1))=\widetilde{\textbf{Gr}}(2,d+1), $$
where $\widetilde{\textbf{Gr}}(2,d+1)$ is the (compact) oriented
Grassmannian manifold (i.e. the double cover of
$\textbf{Gr}(2,d)$) and ``$\cong$'' is the
induced Moser map
$$\widetilde{\mathcal{M}_E}:\mathcal{H}(\overline{\Sigma_E}) \to
(S^*\mathbb{S}^d)/\mathbb{S}^1,\quad [z] \mapsto [\overline{\mathcal{M}_E}(z)]. $$
Denote $\pi:\overline{\Sigma_E}\to
\mathcal{H}(\overline{\Sigma_E})$ the projection, then the
disintegration theorem gives
\begin{equation}\label{eq:disint}
\int_{\overline{\Sigma_E}}b\circ \overline{\mathcal{M}_E} d\overline{\mu}=\int_{\mathcal{H}(\overline{\Sigma_E})}\Big(\int_{\pi^{-1}(\overline{\gamma})}b\circ \overline{\mathcal{M}_E}d \mu_{\overline{\gamma}} \Big)d(\pi_*\overline{\mu})(\overline{\gamma})
\end{equation}
where $\mu_{\overline{\gamma}}$ are probability measures on
$\overline{\Sigma_E}$ such that $\operatorname{supp}
\mu_{\overline{\gamma}}\subseteq \pi^{-1}(\overline{\gamma})$
for $\pi_*\overline{\mu}$-almost all $\overline{\gamma}\in
\mathcal{H}(\overline{\Sigma_E})$ (see \cite[III-70]{DM78} for
the disintegration theorem). Since $\overline{\mu}$ is invariant
under the regularized Moser flow, we see that
$\mu_{\overline{\gamma}}$ is also invariant under the
regularized Moser flow for $\pi_*\overline{\mu}$-almost all
$\overline{\gamma}$ (as shown by either the uniqueness statement
of the disintegration theorem or by considering test functions
$\psi(\pi(\bullet))\phi(\bullet)$ where $\psi \in
C(\mathcal{H}(\overline{\Sigma_E})),\phi\in
C(\overline{\Sigma_E})$). Then, for $\pi_*\overline{\mu}$-almost
all $\overline{\gamma}$,
\begin{equation}\label{eq:prechange}
\int_{\pi^{-1}(\overline{\gamma})}a d\mu_{\overline{\gamma}}=\frac{p_0^3}{2\pi}\int_0^{2\pi/p_0^3}a(\overline{\gamma}(t))dt
\end{equation}
for all $a \in C(\overline{\Sigma_E})$. We apply the change of
variables $t \mapsto t(s)$ to \eqref{eq:prechange} and set $a =b \circ \overline{\mathcal{M}_E}$, where $t(s)$
is defined in \eqref{eq:timechange2} (with initial point
$(u_0,\eta_0)=\overline{\mathcal{M}_E}(\overline{\gamma}(0))$) to obtain
\begin{equation}\label{eq:prechange2}
\int_{\pi^{-1}(\overline{\gamma})}b \circ \overline{\mathcal{M}_E}^{-1} d\mu_{\overline{\gamma}}=\frac{1}{2\pi}\int_0^{2\pi}b(\varphi(s)) (1-u_{d+1}(\varphi(s)))ds,
\end{equation}
where $\varphi \coloneqq
\widetilde{\mathcal{M}_E}(\overline{\gamma})$ is the
corresponding cogeodesic on $S^*\mathbb{S}^d$ corresponding to
$\overline{\gamma}$ on $\overline{\Sigma_E}$ via
$\overline{\mathcal{M}_E}$, and $u_{d+1}(\bullet)$ is the
$(d+1)$-component of the base variable in $\mathbb{S}^d \subset
\mathbb{R}^{d+1}$. Now \eqref{eq:prechange2} can be written as
\begin{equation}\label{eq:prechange3}
\int_{\pi^{-1}(\overline{\gamma})}b \circ \overline{\mathcal{M}_E}^{-1} d\mu_{\overline{\gamma}}=\int_{\pi_{\mathbb{S}^d}^{-1}(\varphi)}b\cdot (1-u_{d+1})d\nu_{\varphi}
\end{equation}
where $\pi_{\mathbb{S}^d}:S^*\mathbb{S}^d \to
S^*\mathbb{S}^d/\mathbb{S}^1$ and $\nu_{\varphi}$ is the
cogeodesic flow invariant probability measure on $S^*\mathbb{S}^d$ supported
on $\pi_{\mathbb{S}^d}^{-1}(\varphi)$. Now substituting
\eqref{eq:prechange3} into \eqref{eq:disint} and pushing forward
the integral via $\overline{\mathcal{M}_E}$, we obtain
\begin{align}
\int_{S^*\mathbb{S}^d}b\ d(\overline{\mathcal{M}_E})_*\overline{\mu}=\int_{(S^*\mathbb{S}^d)/\mathbb{S}^1}\Big(\int_{\pi_{\mathbb{S}^d}^{-1}(\varphi)}b\cdot (1-u_{d+1}) d \nu_{\varphi} \Big)d\big((\widetilde{\mathcal{M}_E})_*\pi_*\overline{\mu}\big)(\varphi).\label{eq:leave}
\end{align}
Finally, if we define
$$\nu \coloneqq \int_{(S^*\mathbb{S}^d)/\mathbb{S}^1}
\nu_{\varphi}\ d\big((\widetilde{\mathcal{M}_E})_*\pi_*\overline{\mu}\big)(\varphi), $$
we see that $\nu$ is a probability measure invariant under the
cogeodesic flow by construction, and \eqref{eq:leave} shows
$(\overline{\mathcal{M}_E})_*\overline{\mu}=(1-u_{d+1})\nu$, as
desired.

This argument is completely reversible. Indeed, for the converse, one starts by disintegrating $\nu$ along cogeodesics and ends with an equation similar to \eqref{eq:disint} by applying the reverse change of variables from \eqref{eq:prechange2} to \eqref{eq:prechange}.
\end{proof}

\subsection{Background: Spectral theory of \texorpdfstring{$\widehat{H}_{\hbar}$}{H h} and the Fock
  map}\label{subsec:spec}\par
As noted in the introduction, the operator
$\widehat{H}_{\hbar}:L^2(\mathbb{R}^d) \to L^2(\mathbb{R}^d)$ is
initially defined on $C_c^{\infty}(\mathbb{R}^d)$ thanks to
Hardy's inequality. Using Hardy's inequality further (along with
the Peter-Paul inequality), one can show $\widehat{H}_{\hbar}$
is semibounded below (see \cite[Chapter 8, \S 7]{T11} and
\cite[Example 3.2.11]{LN25}). Thus $\widehat{H}_{\hbar}$ extends
to a self-adjoint Friedrichs extension defined on $H^2(\mathbb{R}^d)$ (\cite[Chapter
8, \S 7, Proposition 7.2]{T11}).
\par Because $-|x|^{-1} \in
L^2+L^{\infty}$ and $-|x|^{-1}\to 0$ as $x \to \infty$, theorems
of Kato imply $\widehat{H}_{\hbar}$ is a relatively compact
perturbation of $-\Delta$. Consequently, the essential spectrum
of $\widehat{H}_{\hbar}$ coincides with that of $-\Delta$, which
is $[0,\infty)$ (\cite[Theorem V.5.7]{K95}). Thus the spectrum
of $\widehat{H}_{\hbar}$ in $(-\infty,0)$ consists only of
isolated eigenvalues with finite multiplicities, possibly
infinitely many accumulating at $0$. The latter indeed happens
in the Coulomb case since $-|x|^{-1}$ is attractive and decaying slowly
at infinity. This can be proved by the min-max principle (see
\cite[Theorem XIII.6]{RS4}).
\par Because the potential is radial, one can use the $\operatorname{SO}(d)$-symmetry and separate variables. This leads to the study of certain ordinary differential equations which one can solve explicitly (see \cite[\S 2.1.2]{L25}). Overall, one obtains
\begin{equation}\label{eq:energy}
\operatorname{spec}\widehat{H}_{\hbar} =\Big\{E_{\hbar}(N) \coloneqq  -\frac{1}{2\hbar^2(N+\frac{d-1}{2})^2} \mid N=0,1,\ldots\Big\}\sqcup [0,\infty),
\end{equation}
where each eigenvalue $E_{\hbar}(N)$ has multiplicity
$\binom{N+d-1}{d-1}+\binom{N+d-2}{d-1}$. We define the eigenspace
\begin{equation}\label{eq:eigenspace}
\mathcal{E}_{E_{\hbar}(N)} \coloneqq \{ \psi \in H^2(\mathbb{R}^d) \mid \widehat{H}_{\hbar}\psi=E_{\hbar}(N)\psi\}.
\end{equation}Each eigenspace 
$\mathcal{E}_{E_{\hbar}(N)}$ is spanned by (explicit) functions of the form
\begin{equation}\label{eq:formeq}
C_{\hbar,N,\ell}e^{-|\widetilde{x}|}p_{N-\ell}(|\widetilde{x}|)q_{\ell}(x), \quad \widetilde{x}\coloneqq \frac{\sqrt{-2E_{\hbar}(N)}}{\hbar}x
\end{equation}
where $C_{\hbar,N,\ell}$ is an $L^2$-normalization constant,
$p_{N-\ell}$ is a polynomial of degree $N-\ell$ with
$p_{N-\ell}(0)=1$, and $q_{\ell}$ is a homogeneous polynomial of
degree $\ell$ (see Figure \ref{fig:eig} for some intensity plots and \cite[(2.18)]{L25} for the explicit formulas).
\begin{figure}[!htbp]
\centering
  \includegraphics[width=0.6\linewidth]{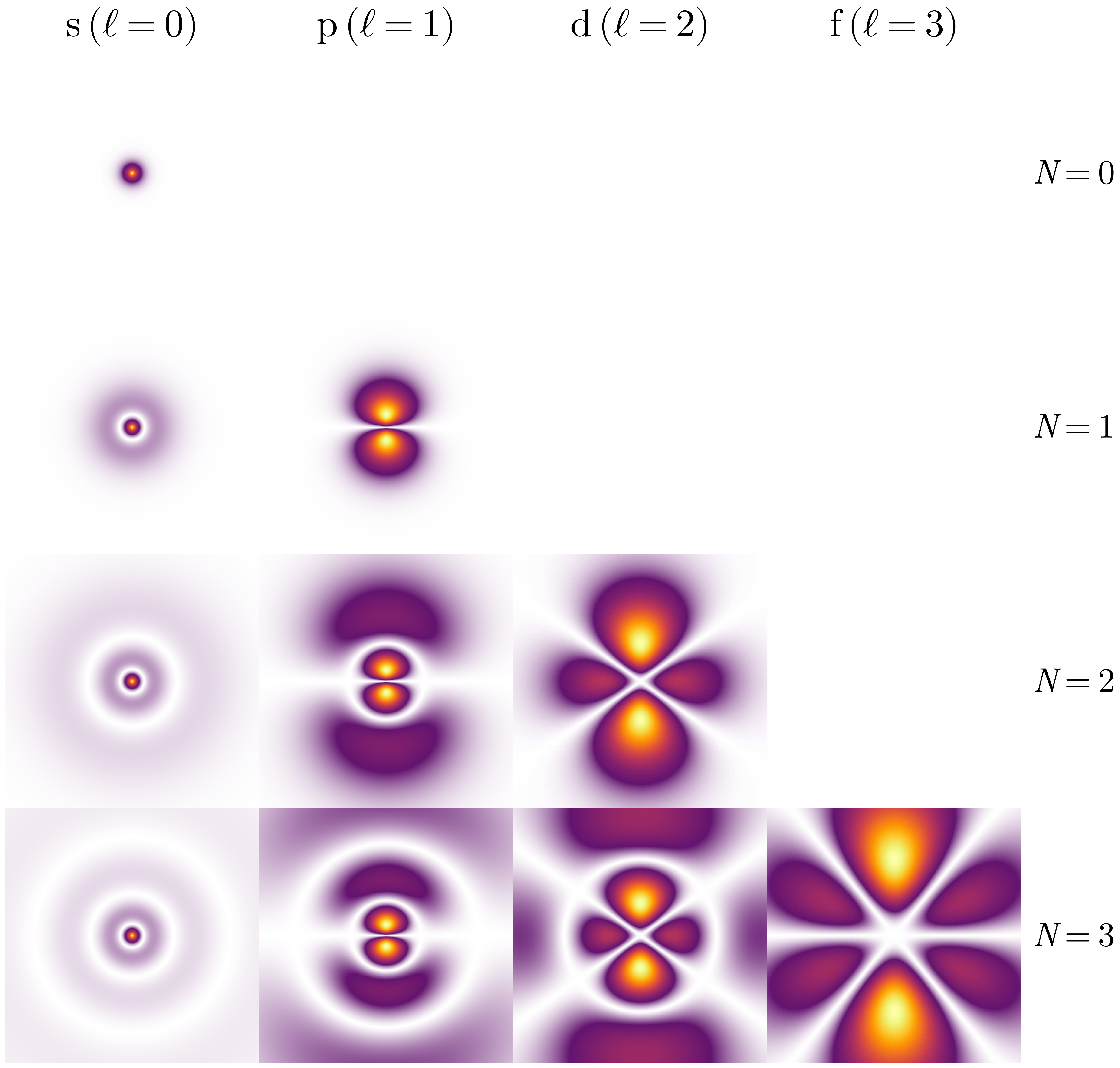}%
  \caption{Intensity plot of the eigenfunctions \eqref{eq:formeq} in $d=3$ on $[-20,20] \times [-20,20]$ in the $xz$-plane, where $\hbar=1$ and the magnetic number $m=0$.}\label{fig:eig}
  \end{figure}

Consequently, one can show
\begin{equation}\label{eq:reg}
\mathcal{E}_{E_{\hbar}(N)} \subset H^{\frac{d}{2}+1-0}(\mathbb{R}^d) \cap W^{1,\infty}(\mathbb{R}^d) \cap \Big\{\psi \in C^{\infty}(\mathbb{R}^d \setminus \{0\}): \sup_{|x|\geq 1}\langle x \rangle^M|\partial^{\alpha}\psi|<\infty, \begin{array}{l}\forall M>0\\ \forall \alpha \in \mathbb{Z}_{\geq 0}^d\end{array}\Big\}.
\end{equation}
One can show this by analyzing the specific form of the eigenfunctions in \eqref{eq:formeq}, or one can use resolvent estimates (\cite[(1.6),(1.7)]{T22}) along with an Agmon-type theorem for the exponential decay (\cite[Theorem 3.4]{HS96}) and elliptic regularity for the smoothness away from the origin. In general, one cannot do much better than \eqref{eq:reg}, as the function $e^{-|x|}$ fails to be in $H^{d/2+1}\cup C^1$, as shown by its explicit Fourier transform.
\par The eigenspace multiplicity of
$\mathcal{E}_{E_{\hbar}(N)}$
\begin{equation}\label{eq:eigmult2}
\operatorname{dim} \mathcal{E}_{E_{\hbar}(N)}=\binom{N+d-1}{d-1}+\binom{N+d-2}{d-1}
\end{equation}
happens to be the \textit{same} as
the eigenspace multiplicity of the degree $N$ spherical
harmonics of $-\Delta_{\mathbb{S}^d}$. This is not a
coincidence, as it is a corollary of a theorem of Fock:
\begin{theo}[\cite{F35}]\label{th:fock} Suppose $E_{\hbar}<0$ is an eigenvalue
of $\widehat{H}_{\hbar}$ with $\hbar >0$. Then the Fock map
$\mathcal{V}_{\hbar,E_{\hbar}}:\mathcal{E}_{E_{\hbar}} \to
\mathcal{E}_{\mathbb{S}^d}(N)$ defined by
\begin{align}
\mathcal{V}_{\hbar,E_{\hbar}}\coloneqq&\ \widehat{\omega}\circ
  \mathcal{F}_{\hbar}\circ
  \widehat{\mathcal{D}}_{\sqrt{-2E_{\hbar}}}\circ
  \frac{1}{\sqrt{2}}\Big\langle
  \frac{1}{\sqrt{-2E_{\hbar}}}\hbar
          D\Big\rangle \label{eq:FOCK}\\
=&\  \widehat{\omega}\circ \mathsf{M}_{\frac{\langle x \rangle}{\sqrt{2}}}\circ 
  \mathcal{F}_{\hbar}\circ
  \widehat{\mathcal{D}}_{\sqrt{-2E_{\hbar}}} \label{eq:FOCK2}
\end{align}
is a unitary map onto $\mathcal{E}_{\mathbb{S}^d}(N)$, the space
of spherical harmonics of degree $N$, where $N$ satisfies
$E_{\hbar}^{-1}=-2\hbar^2(N+\frac{d-1}{2})^2$. The map
$\mathcal{F}_{\hbar}$ is the (unitary) semiclassical Fourier transform,
and the remaining maps are defined by
\begin{equation}\label{eq:fockmaps}
\begin{gathered}
\widehat{\omega}:L^2(\mathbb{R}^d) \to L^2(\mathbb{S}^d),\quad
\widehat{\omega}[f](u) \coloneqq
\Big(\frac{|\omega^{-1}(u)|^2+1}{2}\Big)^{d/2}f\big(\omega^{-1}(u)\big)\\
\widehat{\mathcal{D}}_k:L^2(\mathbb{R}^d)\to
L^2(\mathbb{R}^d),\quad
\widehat{\mathcal{D}}_k[f](x)=k^{-d/2}f(k^{-1}x)\\
\langle k D \rangle :H^1(\mathbb{R}^d) \to
L^2(\mathbb{R}^d),\quad \langle k D \rangle \coloneqq
\sqrt{1-k^2\Delta},\\
\mathsf{M}_{\frac{\langle x \rangle}{\sqrt{2}}}:L^2(\mathbb{R}^d)\to \langle x \rangle L^2(\mathbb{R}^d),\quad \mathsf{M}_{\frac{\langle x \rangle}{\sqrt{2}}}[f](x) \coloneqq \frac{\langle x \rangle}{\sqrt{2}}f(x)=\sqrt{\frac{1+|x|^2}{2}}f(x),
\end{gathered}
\end{equation}
for any $k>0$, where $\omega^{-1}$ is defined in \eqref{eq:stereo}.
\end{theo}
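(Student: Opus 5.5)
The plan is to prove the equality of \eqref{eq:FOCK} and \eqref{eq:FOCK2} first, and then to prove unitarity onto $\mathcal{E}_{\mathbb{S}^d}(N)$ in momentum space. There the Coulomb eigenvalue equation becomes an integral equation, and stereographic projection turns it into an eigenvalue problem for a rotation-invariant integral operator on $\mathbb{S}^d$.

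\textbf{Step 1: the two expressions agree.} Write $p_0=\sqrt{-2E_\hbar}$. We have $\mathcal{F}_\hbar\circ\widehat{\mathcal{D}}_{p_0} = \widehat{\mathcal{D}}_{p_0^{-1}}\circ\mathcal{F}_\hbar$, up to the convention of which variable is dilated. The semiclassical Fourier transform conjugates $\langle p_0^{-1}\hbar D\rangle$ to multiplication by $\langle p_0^{-1}\xi\rangle$, and the dilation then turns this into multiplication by $\langle x\rangle$. Hence $\mathcal{F}_\hbar\widehat{\mathcal{D}}_{p_0}\tfrac{1}{\sqrt2}\langle p_0^{-1}\hbar D\rangle=\mathsf{M}_{\langle x\rangle/\sqrt2}\mathcal{F}_\hbar\widehat{\mathcal{D}}_{p_0}$, which is routine bookkeeping.

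\textbf{Step 2: the equation on the sphere.} Set $\phi=\mathcal{F}_\hbar\widehat{\mathcal{D}}_{p_0}\psi$ for $\psi\in\mathcal{E}_{E_\hbar}$. Since $\psi\in H^2$, the equation $\widehat H_\hbar\psi=E_\hbar\psi$ holds in $L^2$. The Fourier transform of $|x|^{-1}$ is $c_d|\xi|^{-(d-1)}$, so after rescaling the equation becomes
\[
\tfrac{1+|p|^2}{2}\,\phi(p)=\frac{c_d}{\hbar p_0}\int_{\mathbb{R}^d}\frac{\phi(q)}{|p-q|^{d-1}}\,dq
\]
for an explicit constant $c_d$. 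I will use two stereographic identities:
\[
|\omega(p)-\omega(q)|^2=\frac{4|p-q|^2}{(1+|p|^2)(1+|q|^2)},\qquad d\sigma(\omega(q))=\Big(\frac{2}{1+|q|^2}\Big)^d dq .
\]
With these, $F\coloneqq\widehat\omega[\mathsf{M}_{\langle x\rangle/\sqrt2}\phi]$, that is $F(\omega(p))=\big(\tfrac{1+|p|^2}{2}\big)^{(d+1)/2}\phi(p)$, satisfies
\[
F(u)=\frac{c_d'}{\hbar p_0}\int_{\mathbb{S}^d}\frac{F(v)}{|u-v|^{d-1}}\,d\sigma(v).
\]
The weight exponent $(d+1)/2$ is exactly what makes the Jacobian factors cancel.

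\textbf{Step 3: identify the spectrum.} The kernel $|u-v|^{-(d-1)}$ is zonal, so by Funk--Hecke each space $\mathcal{E}_{\mathbb{S}^d}(N')$ is an eigenspace of the integral operator. The eigenvalue on $\mathcal{E}_{\mathbb{S}^d}(N')$ should be a constant times $(N'+\tfrac{d-1}{2})^{-1}$; this is the Riesz-potential identity saying the operator is a multiple of $\big(-\Delta_{\mathbb{S}^d}+(\tfrac{d-1}{2})^2\big)^{-1/2}$. Since the spherical harmonics are complete in $L^2(\mathbb{S}^d)$, $F$ must lie in a single $\mathcal{E}_{\mathbb{S}^d}(N')$ with $\hbar p_0\propto (N'+\tfrac{d-1}{2})^{-1}$. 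Matching constants against \eqref{eq:energy} forces $N'=N$.

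\textbf{Step 4: isometry and surjectivity.} A direct change of variables gives
\[
\|F\|_{L^2(\mathbb{S}^d)}^2=\int\tfrac{1+|p|^2}{2}|\phi|^2\,dp .
\]
This equals $\|\phi\|^2=\|\psi\|^2$ exactly when the rescaled kinetic energy satisfies $\int|p|^2|\phi|^2=\|\phi\|^2$. That is the virial identity $\langle-\tfrac{\hbar^2}{2}\Delta\psi,\psi\rangle=-E_\hbar\|\psi\|^2$ for Coulomb eigenfunctions, which I would obtain from the dilation commutator using the regularity \eqref{eq:reg}. So the map is an isometry, hence injective. Since $\dim\mathcal{E}_{E_\hbar(N)}=\dim\mathcal{E}_{\mathbb{S}^d}(N)$ by \eqref{eq:eigmult2}, it is onto. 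Alternatively, Step 2 can be run backwards from any spherical harmonic.

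\textbf{Main obstacle.} I expect the main work to be computing the Funk--Hecke eigenvalue of $|u-v|^{-(d-1)}$ precisely enough to match the normalization constants; I would do this via the generating function of the Gegenbauer polynomials $C^{(d-1)/2}_{N}$. A secondary difficulty is justifying the integral equation, since $\phi$ is only known to decay like $\langle p\rangle^{-d/2-1+0}$. I would handle this by testing the equation against Schwartz functions.
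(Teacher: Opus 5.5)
Your proposal is correct and follows the same outline as the paper. The virial identity (Lemma \ref{lem:qvt} and Remark \ref{rem:unitary}) gives the isometry. Applying the Fock map to the eigenvalue equation gives a Riesz-type integral equation on $\mathbb{S}^d$ whose eigenspaces are the $\mathcal{E}_{\mathbb{S}^d}(N')$, and equality of dimensions \eqref{eq:eigmult2} gives surjectivity. Your Funk--Hecke step is the explicit form of the group-theoretic (Schur's lemma) route the paper lists as one option, and the constants work out: the Fourier constant times the Funk--Hecke eigenvalue is exactly $(N'+\frac{d-1}{2})^{-1}$, so $\hbar p_0=(N'+\frac{d-1}{2})^{-1}$ forces $N'=N$.
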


\begin{rem}
The Fock map \eqref{eq:FOCK} compares nicely with the classical
Moser map \eqref{eq:MOSER}. Indeed, the maps
$\omega^*,R_{-\pi/2},\mathcal{D}_{p_0}$ of \eqref{eq:MOSER}
quantize, in the sense of semiclassical (unitary) Fourier
integral operators, to
$\widehat{\omega},\mathcal{F}_{\hbar},\widehat{\mathcal{D}}_{\sqrt{-2E_{\hbar}}}$
of \eqref{eq:FOCK}, respectively. At first glance, it may seem
that the peculiar operator $ \frac{1}{\sqrt{2}}\langle
\frac{1}{\sqrt{-2E_{\hbar}}}\hbar D\rangle$ (or, equivalently,
$\mathsf{M}_{\frac{\langle x \rangle}{\sqrt{2}}}$) in the Fock map is a
consequence of the fact that the Moser map is \textit{not} a
symplectomorphism due to the non-symplectic dilation
$\mathsf{S}$. While this could be a first clue, it cannot
be the whole story, as the Moser map \textit{is} a symplectomorphism in
the special case $p_0=1$.

Regardless of the value of $p_0$, $\mathcal{M}_E|_{\Sigma_E}:(\Sigma_E,-p_0 x\cdot d\xi|_{\Sigma_E}) \to (S^*(\mathbb{S}_{\neq \mathsf{NP}}^d),\eta\cdot du|_{S^*(\mathbb{S}_{\neq \mathsf{NP}}^d)}) $ is
a strict contact transformation by \eqref{eq:sympl}. Indeed, if
$X_{H} \coloneqq \xi \cdot \partial_x-\frac{x}{|x|^3}\cdot
\partial_{\xi}$ denotes the Hamiltonian vector field of $H$ and
$\alpha \coloneqq -p_0 x\cdot d\xi|_{\Sigma_E}$, then
\begin{equation}\label{eq:contact}
\alpha(X_{H})= -p_0x \cdot
\Big(-\frac{x}{|x|^3}\Big)=\frac{p_0}{|x|}=p_0^3\frac{|\xi|^2/p_0^2+1}{2} \quad \text{on }\Sigma_E,
\end{equation}
which never vanishes. Equation \eqref{eq:contact} not only explains the time change factor \eqref{eq:timechange}, but, interestingly, the semiclassical quantization of the square root of \eqref{eq:contact} is $\frac{1}{\sqrt{2}}\langle
\frac{1}{\sqrt{-2E_{\hbar}}}\hbar D\rangle$, up to the square root of the prefactor $p_0^{3}$, which is evidently coming from Kepler's third law \eqref{eq:kepler3}.

The operator $\frac{1}{\sqrt{2}}\langle
\frac{1}{\sqrt{-2E_{\hbar}}}\hbar D\rangle$ is rather
miraculously an $L^2$-isometry on $\mathcal{E}_{E_{\hbar}}$, as
explained below in Remark \ref{rem:unitary} by the quantum virial theorem. Generally,
$\frac{1}{\sqrt{2}}\langle \frac{1}{\sqrt{-2E_{\hbar}}}\hbar
D\rangle:L^2 \to H^{-1}$, so there is no hope for a similar
statement for $L^2$ functions not in $\mathcal{E}_{E_{\hbar}}$.

In any event, it is worth noting that Fock himself did not think of this map as a quantization in this way, as his work \cite{F35} predates Moser's \cite{M70}.
\end{rem}
Theorem \ref{th:fock} was originally proved when $d=3$ in \cite{F35} and
other expositions can be found in \cite{BI66}, \cite[Chapter
9]{RC21}, and in general dimension in \cite[\S 2.2.2]{L25}. The main parts of the proof can
be broken up as follows:
\begin{itemize}
\item $\mathcal{V}_{\hbar,E_{\hbar}}$ is $L^2$-norm preserving (and hence injective)
by a consequence of the quantum virial theorem: Lemma 
\ref{lem:qvt}. See Remark \ref{rem:unitary} for the proof.
\item $\mathcal{V}_{\hbar,E_{\hbar}}$ maps into $\mathcal{E}_{\mathbb{S}^d}(N)$ due to a myriad of different arguments. Typically, one starts by applying the Fock map to both sides of the Helmholtz equation \eqref{eq:EIGGG} and arrives at an integral equation involving a Riesz-type operator on the sphere. One then shows that this integral equation is actually a Helmholtz equation of a function of $-\Delta_{\mathbb{S}^d}$ by either \begin{itemize}
\item Green's identities \cite[pp. 333]{BI66} (or, relatedly,
with layer potential formulas for the sphere \cite[Chapter 11,
(11.35)]{T11}),
\item a group theoretic approach with Schur's lemma
\cite[pp. 285]{RC21},
\item or using the uniqueness of the Dirichlet problem on the ball \cite[\S 2.2.2]{L25}.
\end{itemize}  
\end{itemize}
 As an isometry between finite dimensional vector spaces of the same dimension, the above bullet points show $\mathcal{V}_{\hbar,E_{\hbar}}$ is unitary.
\begin{lem}[Coulomb Virial Theorem]\label{lem:qvt} We have commutator identity
\begin{equation}\label{eq:comm}
[\widehat{H}_{\hbar},x \cdot \nabla_x]=-\hbar^2\Delta-\frac{1}{|x|},
\end{equation}
as continuous operators $H^2(\mathbb{R}^d) \to L^2(\mathbb{R}^d)$, where the commutator is initially defined with both compositions interpreted in $\mathcal{S}'$ and $d\geq 3$.
\end{lem}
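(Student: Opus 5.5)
We will prove \eqref{eq:comm} by a direct computation on a dense class, followed by a density/continuity argument. The key structural facts are that $-\Delta$ is homogeneous of degree $-2$ and $-1/|x|$ is homogeneous of degree $-1$ under the dilation generator $x\cdot\nabla_x$. Formally, $[\Delta, x\cdot\nabla]=2\Delta$, so
$$\Big[-\tfrac{\hbar^2}{2}\Delta,\,x\cdot\nabla\Big]=-\hbar^2\Delta .$$
For a multiplication operator $V$ we have $[V,x\cdot\nabla]f=-(x\cdot\nabla V)f$. With $V=-1/|x|$, Euler's identity gives $x\cdot\nabla V=-V=1/|x|$, hence
$$\Big[-\tfrac{1}{|x|},\,x\cdot\nabla\Big]=-\tfrac{1}{|x|}.$$
Summing the two gives the claimed right-hand side.

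\emph{Step 1: the identity on $C_c^\infty(\mathbb{R}^d)$.} For $f\in C_c^\infty(\mathbb{R}^d)$, the kinetic part is a classical computation with smooth functions. For the potential part, the point is that $1/|x|\in W^{1,1}_{\mathrm{loc}}(\mathbb{R}^d)$ with weak gradient $-x/|x|^3\in L^1_{\mathrm{loc}}$, which holds for $d\geq 2$; there is no delta contribution at the origin for a first derivative. Hence the distributional product rule
$$x\cdot\nabla\big(f/|x|\big)=\frac{x\cdot\nabla f}{|x|}-\frac{f}{|x|}$$
holds in $L^1_{\mathrm{loc}}$, and the commutator identity follows in $\mathcal{S}'$.

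\emph{Step 2: making sense of both compositions on $H^2$.} For $f\in H^2$ we have $x\cdot\nabla f\in\mathcal{S}'$, so $\Delta(x\cdot\nabla f)$ is defined in $\mathcal{S}'$. Moreover,
$$\frac{1}{|x|}\,(x\cdot\nabla f)=\frac{x}{|x|}\cdot\nabla f\in H^0=L^2,$$
and this depends continuously on $f$ in $H^2$. Therefore $\widehat{H}_\hbar\circ(x\cdot\nabla):H^2\to\mathcal{S}'$ is continuous. In the other order, Hardy's inequality (valid since $d\geq3$) gives $\|f/|x|\|_{L^2}\lesssim\|\nabla f\|_{L^2}$, so $\widehat{H}_\hbar:H^2\to L^2\subset\mathcal{S}'$ is continuous, and $x\cdot\nabla$ is continuous on $\mathcal{S}'$. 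Thus $(x\cdot\nabla)\circ\widehat{H}_\hbar:H^2\to\mathcal{S}'$ is continuous as well. The right-hand side $-\hbar^2\Delta-1/|x|$ is continuous $H^2\to L^2$, again by Hardy.

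\emph{Step 3: density.} Since $C_c^\infty$ is dense in $H^2$ and both sides of \eqref{eq:comm} are continuous maps $H^2\to\mathcal{S}'$ that agree on $C_c^\infty$, they agree on all of $H^2$. Because the right-hand side takes values in $L^2$, the commutator is in fact a continuous operator $H^2\to L^2$.

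The main obstacle is Step 2: checking that each composition individually defines a continuous map $H^2\to\mathcal{S}'$, even though $x\cdot\nabla$ does not map $H^2$ into $L^2$. This is handled by the cancellation $|x|^{-1}x=\hat x$ for the potential term and by Hardy's inequality, which is exactly where the hypothesis $d\geq 3$ enters.
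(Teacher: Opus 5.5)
Your proposal is correct and follows essentially the same route as the paper. You verify $[-\Delta,x\cdot\nabla_x]=-2\Delta$ and $[|x|^{-1},x\cdot\nabla_x]=|x|^{-1}$ on $C_c^\infty$, show both compositions are continuous $H^2\to\mathcal{S}'$ via $\frac{1}{|x|}(x\cdot\nabla_x)=\frac{x}{|x|}\cdot\nabla_x$ and Hardy's inequality, and conclude by density. One harmless slip: the weak gradient $-x/|x|^3$ has size $|x|^{-2}$, which is locally integrable only for $d\geq 3$ (not $d\geq 2$), but that is exactly the range the lemma assumes.
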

\begin{proof}
Both sides make sense as continuous operators $H^2(\mathbb{R}^d) \to \mathcal{S}'(\mathbb{R}^d)$ since $\frac{1}{|x|}(x \cdot \nabla_x)=\frac{x}{|x|}\cdot \nabla_x:H^2 \to L^2$ and Hardy's inequality gives the embedding $\frac{1}{|x|}:H^2 \to L^2$. On $C_c^{\infty}(\mathbb{R}^d)$, we have $$[|x|^{-1},x\cdot \nabla_x]=|x|^{-1},\qquad [-\Delta,x\cdot \nabla_x]=-2\Delta.$$ We see that \eqref{eq:comm} is true on $C_c^{\infty}$ and hence on $H^2$ by density. The right hand side is in $L^2$ by Hardy's inequality, so we are done. 
\end{proof}
\begin{rem}\label{rem:unitary} An easy consequence of the above shows that
\begin{equation}\label{eq:comeig}
[\widehat{H}_{\hbar}-E_{\hbar},x \cdot \nabla_x]=-\frac{\hbar^2}{2}\Delta+E_{\hbar} \quad \text{on }\mathcal{E}_{E_{\hbar}}.
\end{equation}
If $\psi \in \mathcal{E}_{E_{\hbar}}$, then one can easily show
$r \partial_r\psi=x\cdot \nabla_x\psi \in H^1$ since $\psi\in
\mathcal{S}+H_{\mathrm{loc},\mathrm{comp}}^2$ by \eqref{eq:reg}. Using
Lemma \ref{lem:qvt}, one can show $\Delta r \partial_r \psi \in L^2$ and hence
\begin{equation}\label{eq:reg2}
r\partial_r\psi \in H^2.\footnote{This can also be deduced by the explicit form of $\psi$ in \eqref{eq:formeq}}
\end{equation}
Consequently, for $\psi \in \mathcal{E}_{E_{\hbar}}$, by using the adjoint of $\widehat{H}_{\hbar}-E_{\hbar}$, we have
\begin{align}
\Big\langle
  \Big(-\frac{\hbar^2}{2}\Delta+E_{\hbar}\Big)\psi,\psi
  \Big\rangle=0 &\implies \Big\langle
  \Big(-\frac{\hbar^2}{2}\Delta-E_{\hbar}\Big)\psi,\psi
  \Big\rangle=-2E_{\hbar}\langle \psi,\psi \rangle\nonumber \\
  &\implies \Big\lVert  \frac{1}{\sqrt{2}}\Big\langle \frac{1}{\sqrt{-2E_{\hbar}}}\hbar D\Big\rangle \psi \Big\rVert_{L^2}^2=\lVert \psi \rVert_{L^2}^2, 
\end{align}
proving $\mathcal{V}_{\hbar,E_{\hbar}}$ is an $L^2$-isometry on
$\mathcal{E}_{E_{\hbar}}$.
\end{rem}

\section{Proofs of Theorems \ref{theo:1}, \ref{theo:2}}

\subsection{Proof of Theorem \ref{theo:1}}
Let $E<0$ and take sequences
$\hbar_j \to 0, E_{\hbar_j}\to E$ along with $L^2$-normalized
$E_{\hbar_j}$-eigenfunctions $\Psi_j \in \mathcal{E}_{E_{\hbar_j}}$ of
$\widehat{H}_{\hbar_j}$. Define
\begin{equation}\label{eq:phi}
\phi_j \coloneqq \mathcal{V}_{\hbar_j,E_{\hbar_j}}[\Psi_j]
\end{equation}
be $N_j(N_j+d-1)$-eigenfunctions of $-\Delta_{\mathbb{S}^d}$ where $N_j$
satisfies $E_{\hbar_j}^{-1}=-2\hbar_j^2(N_j+\frac{d-1}{2})^2$. For
notational convenience, define
\begin{equation}\label{eq:numbers}
e_j \coloneqq \sqrt{-2E_{\hbar_j}},\quad p_0 \coloneqq \sqrt{-2E},\quad k_j \coloneqq \frac{1}{\sqrt{N_j(N_j+d-1)}}=\frac{\hbar_j}{\sqrt{(-2E_{\hbar_j})^{-1}-\frac{(d-1)^2}{4}\hbar_j^2}}.
\end{equation}
Using
the definition of $\mathcal{V}_{\hbar_j,E_{\hbar_j}}$ in
\eqref{eq:FOCK2},
\begin{align}
\langle \operatorname{Op}_{\hbar_j}(a)\Psi_{j},\Psi_{j} \rangle &=\langle \operatorname{Op}_{\hbar_j}(a)\mathcal{V}_{\hbar_j,E_{\hbar_j}}^{-1}\phi_{j},\mathcal{V}_{\hbar_j,E_{\hbar_j}}^{-1}\phi_{j} \rangle \nonumber\\
  \overset{\eqref{eq:FOCK2}}&{=}
                                                         \Big\langle
                                                         \operatorname{Op}_{\hbar_j}(a)\widehat{\mathcal{D}
                                                         }_{e_j}^{-1}\mathcal{F}_{\hbar_j}^{-1}\mathsf{M}_{\langle x\rangle/\sqrt{2}}^{-1}\widehat{\omega}^{-1}\phi_j,\widehat{\mathcal{D}
                                                         }_{e_j}^{-1}\mathcal{F}_{\hbar_j}^{-1}\mathsf{M}_{\langle x\rangle/\sqrt{2}}^{-1}\widehat{\omega}^{-1}\phi_j
                                                         \Big\rangle\nonumber\\
  \overset{ \text{unitarity of }\widehat{\mathcal{D}}_{e_j}}&{=}\Big\langle
                                                        \widehat{\mathcal{D}
                                                         }_{e_j} \operatorname{Op}_{\hbar_j}(a)\widehat{\mathcal{D}
                                                         }_{e_j}^{-1}\mathcal{F}_{\hbar_j}^{-1}\mathsf{M}_{\langle x\rangle/\sqrt{2}}^{-1}\widehat{\omega}^{-1}\phi_j,\mathcal{F}_{\hbar_j}^{-1}\mathsf{M}_{\langle x\rangle/\sqrt{2}}^{-1}\widehat{\omega}^{-1}\phi_j
    \Big\rangle\nonumber\\
   \overset{\cite[(11.3.1)]{Z12}}&{=}\Big\langle
                                                     \operatorname{Op}_{\hbar_j}(\mathcal{D}_{e_j^{-1}}^*a)\mathcal{F}_{\hbar_j}^{-1}\mathsf{M}_{\langle x\rangle/\sqrt{2}}^{-1}\widehat{\omega}^{-1}\phi_j,\mathcal{F}_{\hbar_j}^{-1}\mathsf{M}_{\langle x\rangle/\sqrt{2}}^{-1}\widehat{\omega}^{-1}\phi_j
     \Big\rangle\nonumber\\
  &=\Big\langle
                                                     \operatorname{Op}_{k_j}(\mathsf{S}_{\hbar_j/k_j}^*\mathcal{D}_{e_j^{-1}}^*a)\mathcal{F}_{k_j}^{-1}\mathsf{M}_{\langle x\rangle/\sqrt{2}}^{-1}\widehat{\omega}^{-1}\phi_j,\mathcal{F}_{k_j}^{-1}\mathsf{M}_{\langle x\rangle/\sqrt{2}}^{-1}\widehat{\omega}^{-1}\phi_j
                                                         \Big\rangle,\label{eq:almost}
\end{align}
where we used $\operatorname{Op}_{h}(a)=\widehat{\mathcal{D}
}_{c}^{-1}
\operatorname{Op}_{ch}(\mathsf{S}_{c^{-1}}^*a)\widehat{\mathcal{D}}_c$
with $\mathsf{S}_c(x,\xi)\coloneqq (cx,\xi), c>0$ and
$\widehat{\mathcal{D} }_c
\mathcal{F}_{\hbar}^{-1}=\mathcal{F}_{c\hbar}^{-1}$. Now using
the unitarity of the Fourier transform, from \eqref{eq:almost},
we have
\begin{align}
&\langle \operatorname{Op}_{\hbar_j}(a)\Psi_{j},\Psi_{j} \rangle  \overset{\text{unitarity of }\mathcal{F}_{k_j}}{=} \Big\langle
                                                     \mathcal{F}_{k_j}\operatorname{Op}_{k_j}(\mathsf{S}_{\hbar_j/k_j}^*\mathcal{D}_{e_j^{-1}}^*a)\mathcal{F}_{k_j}^{-1}\mathsf{M}_{\langle x\rangle/\sqrt{2}}^{-1}\widehat{\omega}^{-1}\phi_j,\mathsf{M}_{\langle x\rangle/\sqrt{2}}^{-1}\widehat{\omega}^{-1}\phi_j
                                                                  \Big\rangle\nonumber \\
  \overset{\cite[(11.3.1)]{Z12}}&{=}\Big\langle
                                                     \operatorname{Op}_{k_j}(R_{\pi/2}^*\mathsf{S}_{\hbar_j/k_j}^*\mathcal{D}_{e_j^{-1}}^*a)\mathsf{M}_{\langle x\rangle/\sqrt{2}}^{-1}\widehat{\omega}^{-1}\phi_j,\mathsf{M}_{\langle x\rangle/\sqrt{2}}^{-1}\widehat{\omega}^{-1}\phi_j
                                  \Big\rangle\nonumber \\
  \overset{\text{self adj. of  }\mathsf{M}_{\langle x\rangle/\sqrt{2}}^{-1}}&{=}\Big\langle
                                                     \mathsf{M}_{\langle x\rangle/\sqrt{2}}^{-1}\operatorname{Op}_{k_j}(R_{\pi/2}^*\mathsf{S}_{\hbar_j/k_j}^*\mathcal{D}_{e_j^{-1}}^*a)\mathsf{M}_{\langle x\rangle/\sqrt{2}}^{-1}\widehat{\omega}^{-1}\phi_j,\widehat{\omega}^{-1}\phi_j
    \Big\rangle \nonumber \\
   \overset{\cite[(4.4.18)]{Z12}}&{=}\Big\langle\operatorname{Op}_{k_j}\Big(\frac{2}{1+|x|^2}R_{\pi/2}^*\mathsf{S}_{\hbar_j/k_j}^*\mathcal{D}_{e_j^{-1}}^*a\Big)\widehat{\omega}^{-1}\phi_j,\widehat{\omega}^{-1}\phi_j
                                  \Big\rangle+o(1),
\end{align}
since $\operatorname{Op}_{k_j}(\sqrt{2}/\langle x \rangle)=\mathsf{M}_{\langle x\rangle/\sqrt{2}}^{-1}$. Now $e_j^{-1}\to p_0^{-1}$ and $\hbar_j/k_j \to p_0^{-1}$ by \eqref{eq:numbers},
\begin{equation}\label{eq:beej}
\frac{2}{1+|x|^2}R_{\pi/2}^*\mathsf{S}_{\hbar_j/k_j}^*\mathcal{D}_{e_j^{-1}}^*a \to \frac{2}{1+|x|^2}R_{\pi/2}^*\mathsf{S}_{p_0^{-1}}^*\mathcal{D}_{p_0^{-1}}^*a \quad \text{in } S(1).
\end{equation}
Then
$\operatorname{Op}_{k_j}(\frac{2}{1+|x|^2}R_{\pi/2}^*\mathsf{S}_{\hbar_j/k_j}^*\mathcal{D}_{e_j^{-1}}^*a)=\operatorname{Op}_{k_j}(\frac{2}{1+|x|^2}R_{\pi/2}^*\mathsf{S}_{p_0^{-1}}^*\mathcal{D}_{p_0^{-1}}^*a)+o_{L^2
  \to L^2}(1)$ by Calder\'on-Vaillancourt \cite[Theorem 5.1]{Z12}, so 
\begin{align}
\langle \operatorname{Op}_{\hbar_j}(a)\Psi_{j},\Psi_{j} \rangle \overset{\text{unitarity of }\widehat{\omega}}&{=}\Big\langle
                                                     \widehat{\omega}\operatorname{Op}_{k_j}\Big(\frac{2}{1+|x|^2}R_{\pi/2}^*\mathsf{S}_{p_0^{-1}}^*\mathcal{D}_{p_0^{-1}}^*a\Big)\widehat{\omega}^{-1}\phi_j,\phi_j
                                                                                                                \Big\rangle+o(1).\label{eq:uselemma}
\end{align}
Now since $a \in C_c^{\infty}(T^*\mathbb{R}^d)$, we also have
$\frac{2}{1+|x|^2}R_{\pi/2}^*\mathsf{S}_{p_0^{-1}}^*\mathcal{D}_{p_0^{-1}}^*a
\in C_c^{\infty}(T^*\mathbb{R}^d)$. Lemma \ref{lem:ext1} and the identity $\frac{2}{|\omega^{-1}(u)|^2+1}=1-u_{d+1}$ implies that there exists $b \in
C_c^{\infty}(T^*\mathbb{S}^d)$ such
that $$b|_{S^*(\mathbb{S}_{\neq
    \mathsf{NP}}^d)}=(1-u_{d+1})(\mathcal{M}_E^{-1})^*a, \qquad
b|_{S^*\mathbb{S}^d}=(1-u_{d+1})(\overline{\mathcal{M}_E}^{-1})^*\overline{a}$$
and
$$\operatorname{Op}_{k_j}^{\mathbb{S}^d}(b)=\widehat{\omega}\operatorname{Op}_{k_j}\Big(\frac{2}{1+|x|^2}R_{\pi/2}^*\mathsf{S}_{p_0^{-1}}^*\mathcal{D}_{p_0^{-1}}^*a\Big)\widehat{\omega}^{-1}+o_{L^2
  \to L^2}(1) \quad \text{on }\mathcal{E}_{\mathbb{S}^d}(N_j),$$
where $\operatorname{Op}_{k_j}^{\mathbb{S}^d}$ is a semiclassical quantization procedure on the sphere, and $\overline{\mathcal{M}_E}$, $\overline{a}$ are defined in \eqref{eq:compactmoser}, \eqref{eq:barr}, respectively. Altogether by \eqref{eq:uselemma},
\begin{equation}\label{eq:FINAL}
\langle \operatorname{Op}_{\hbar_j}(a)\Psi_{j},\Psi_{j} \rangle=\langle \operatorname{Op}_{k_j}^{\mathbb{S}^d}(b)\phi_j,\phi_j \rangle +o(1)
\end{equation}
Suppose $\mu$ is a semiclassical measure of $\{\Psi_j\}$. After passing to a subsequence of $j \to \infty$ in \eqref{eq:FINAL},
\begin{equation}\label{eq:FINAL2}
\int_{T^*\mathbb{R}^d}a(x,\xi)d\mu=\int_{T^*\mathbb{S}^d}b(u,\eta)d\nu,
\end{equation}
where $\nu$ is a semiclassical measure of the $1$-eigenfunctions
$\{\phi_j\}$ of $-k_j^2\Delta_{\mathbb{S}^d}$. As a semiclassical
measure of eigenfunctions, $\nu$ is a probability measure
supported on $S^*\mathbb{S}^d$ invariant under the
(co)geodesic flow on $S^*\mathbb{S}^d$. By Remark \ref{rem:invsig}, $(1-u_{d+1})\nu$ assigns zero mass to $S_{\mathsf{NP}}^*\mathbb{S}^d$, so it may be restricted to $S^*(\mathbb{S}_{\neq
    \mathsf{NP}}^d)$ without loss. Then \eqref{eq:FINAL2}
implies $$\mu=(\mathcal{M}_E^{-1})_*((1-u_{d+1})\nu|_{S^*(\mathbb{S}_{\neq
    \mathsf{NP}}^d)}),\qquad \overline{\mu}\coloneqq (i_{\Sigma_E})_*\mu=(\overline{\mathcal{M}_E}^{-1})_*((1-u_{d+1})\nu|_{S^*\mathbb{S}^d}).$$  By Lemma \ref{lem:invsig}, $\mu$
is a probability measure supported on $\Sigma_E$ invariant under
the regularized Moser flow.
\par Conversely, we can essentially follow the proof backwards. Let $E<0$ and suppose $\mu$ is a probability measure supported on $\Sigma_E$ invariant under
the regularized Moser flow, and let $\overline{\mu} \coloneqq
(i_{\Sigma_E})_*\mu$. By Lemma \ref{lem:invsig}, there exists a
unique probability measure $\nu$ on $S^*\mathbb{S}^d$ invariant
under the cogeodesic flow such that
$$(1-u_{d+1})\nu \coloneqq
(\overline{\mathcal{M}_E})_*\overline{\mu}.$$
By \cite{JZ99}, $\nu$ is a semiclassical measure of a
sequence $\{\phi_j\}$ of $1$-eigenfunctions of
$-k_j^2\Delta_{\mathbb{S}^d}$ where $k_j=1/\sqrt{N_j(N_j+d-1)}$. In particular, for all $a \in C_c^{\infty}(T^*\mathbb{R}^d)$, if $b \in C_c^{\infty}(T^*\mathbb{S}^d)$ is such that $b|_{S^*\mathbb{S}^d}=(1-u_{d+1})\overline{\mathcal{M}_E^{-1}}^*\overline{a}$,
\begin{equation}\label{eq:convFINAL}
\langle \operatorname{Op}_{k_j}^{\mathbb{S}^d}(b)\phi_j,\phi_j \rangle \to \int_{S^*\mathbb{S}^d}b\ d\nu=\int_{\overline{\Sigma_E}}\overline{a} d \overline{\mu}=\int_{\Sigma_E}\overline{a}\circ i_{\Sigma_E} d\mu=\int_{\Sigma_E}ad\mu.
\end{equation}
We are done once we define $\Psi_j \coloneqq \mathcal{V}_{\hbar_j,E_{\hbar_j}}^{-1}[\phi_j]$ (where $\hbar_j \coloneqq \frac{1}{\sqrt{-2E}(N_j+\frac{d-1}{2})}$ and $E_{\hbar_j} \coloneqq E$, implying $\hbar_j/k_j \to p_0^{-1}$) and combine
\eqref{eq:convFINAL} with \eqref{eq:FINAL}.
\subsection{Proof of Theorem \ref{theo:2}}
The proof of Theorem \ref{theo:2} is essentially the same as
that of Theorem \ref{theo:1}. Indeed, up to \eqref{eq:FINAL},
the proofs are the same up to changing Lemma \ref{lem:ext1} to
the stronger Lemma \ref{lem:ext2}, utilizing the fact that
$$\frac{2}{1+|x|^2}R_{\pi/2}^*\mathsf{S}_{p_0^{-1}}^*\mathcal{D}_{p_0^{-1}}^*a=R_{\pi/2}^*\mathsf{S}_{p_0^{-1}}^*\mathcal{D}_{p_0^{-1}}^*\frac{2p_0^2}{|\xi|^2+p_0^2}a$$
along the fact that $\frac{2p_0^2}{|\xi|^2+p_0^2}=p_0^2|x|$ on
$\Sigma_E$ and \eqref{eq:timechange3}. Equation
\eqref{eq:FINAL2} then becomes
\begin{equation}\label{eq:FINAL3}
\int_{\overline{\Sigma_E}}\overline{a} d\overline{\mu}=\int_{S^*\mathbb{S}^d}(1-u_{d+1})(\overline{\mathcal{M}_E}^{-1})^*\overline{a
} d\nu, 
\end{equation}
showing
$\overline{\mu}=(\overline{\mathcal{M}_E}^{-1})_*[(1-u_{d+1})\nu]$.

This requires a quick proof since we only showed \eqref{eq:FINAL3} for $\overline{a} \in S_{\overline{\Sigma_E}}$. Setting $\overline{a}\equiv 1$ in \eqref{eq:FINAL3} proves $\overline{\mu}$ is a probability measure since $(\overline{\mathcal{M}_E}^{-1})_*[(1-u_{d+1})\nu]$ is a probability measure. Furthermore, both measures agree when restricted to subsets of $\Sigma_E$ by considering $a \in C_c^{\infty}(T^*\mathbb{R}^d)$. Finally, Remark \ref{rem:invsig} implies $(\overline{\mathcal{M}_E}^{-1})_*[(1-u_{d+1})\nu] (\overline{\Sigma_E}\setminus \Sigma_E)=0$, which implies $\overline{\mu}(\overline{\Sigma_E}\setminus \Sigma_E)=0$ because otherwise would contradict $\overline{\mu}(\overline{\Sigma_E})=1$. This shows $\overline{\mu}=(\overline{\mathcal{M}_E}^{-1})_*[(1-u_{d+1})\nu]$ and hence $\overline{\mu}$
is a probability measure on $\overline{\Sigma_E}$ invariant under
the regularized Moser flow.

The proof of the converse is also relatively unchanged from the original. We modify the proof slightly by simply considering test functions $b \in C_c^{\infty}(T^*\mathbb{S}^d)$ such that $b|_{S^*\mathbb{S}^d}=(1-u_{d+1})(\overline{\mathcal{M}_E}^{-1})^*\overline{a}$, and the same argument goes through.
\section{Extending operators to \texorpdfstring{$\Psi_\hbar^0(\mathbb S^d)$}{Psi h 0(Sd)}}\label{sec:extension}
In this section, we prove extension lemmas used in the proofs of Theorems \ref{theo:1} and \ref{theo:2}. Lemmas \ref{lem:ext1} and \ref{lem:ext2} are used in the proofs of Theorem \ref{theo:1} and \ref{theo:2}, respectively. With regards to the proofs of the theorems, Lemma \ref{lem:ext2} is stronger and can be applied to both proofs, so the reader may skip Lemma \ref{lem:ext1}. We include it since it is more elementary and easier to prove. 
\begin{lem}\label{lem:ext1}
Suppose $ a\in C_c^{\infty}(T^*\mathbb{R}^d)$ and define the
following operator on $C^{\infty}(\mathbb{S}^d)$
$$B_{\hbar}\coloneqq E \circ
\widehat{\omega}\circ \operatorname{Op}_{\hbar}^L(a) \circ \widehat{\omega}^{-1}
\circ R,$$
where $\operatorname{Op}_{\hbar}^L$ is semiclassical left quantization, $\widehat{\omega}$ is defined in \eqref{eq:fockmaps}, $R$ is restriction to
$\mathbb{S}_{\neq \mathsf{NP}}^d$, and $E$ extends
functions on $\mathbb{S}_{\neq \mathsf{NP}}^d$ to functions on
$\mathbb{S}^d$ by $0$ at $\mathsf{NP}$. Then $B_{\hbar}\in
\Psi_{\hbar}^{0}(\mathbb{S}^d)$ with principal
symbol $$ C^{\infty}(T^*\mathbb{S}^d)\ni \sigma_{\hbar}(B_{\hbar})(u,\eta)=\begin{cases}
                                              a(\omega^{-1}(u),(d_u\omega^{-1})^{-T}(\eta))
                                               &\text{if
                                              }(u,\eta) \in
                                                 T^*\mathbb{S}_{\neq
                                                 \mathsf{NP}}^d\\
                                               0 &\text{else}.

                                              \end{cases} $$
                                              \end{lem}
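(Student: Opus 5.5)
The plan is to reduce the claim to the standard invariance of semiclassical pseudodifferential operators under diffeomorphisms, plus a check that nothing happens near $\mathsf{NP}$. Since $a\in C_c^{\infty}(T^*\mathbb{R}^d)$, fix $R_0>0$ with $\operatorname{supp}a\subset\{|x|<R_0,\ |\xi|<R_0\}$. Pick a cutoff $\chi\in C_c^{\infty}(\mathbb{R}^d)$ with $\chi\equiv 1$ on $\{|x|\le R_0+1\}$, and split
$$\operatorname{Op}_\hbar^L(a)=\operatorname{Op}_\hbar^L(a)\chi+\operatorname{Op}_\hbar^L(a)(1-\chi).$$
The kernel of $\operatorname{Op}_\hbar^L(a)$ is
$$K(x,y)=(2\pi\hbar)^{-d}\int e^{i(x-y)\cdot\xi/\hbar}a(x,\xi)\,d\xi .$$
It is supported in $|x|<R_0$ and is $O(\hbar^\infty\langle x-y\rangle^{-\infty})$ together with all derivatives when $|x-y|\ge 1$; this follows by integrating by parts in $\xi$.

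\emph{The far piece.} Consider the kernel of $\operatorname{Op}_\hbar^L(a)(1-\chi)$, conjugated by $\widehat{\omega}$:
$$\Big(\tfrac{|\omega^{-1}(u)|^2+1}{2}\Big)^{d/2}K\big(\omega^{-1}(u),\omega^{-1}(v)\big)\Big(\tfrac{|\omega^{-1}(v)|^2+1}{2}\Big)^{-d/2}(1-\chi)\big(\omega^{-1}(v)\big)\,J(v),$$
where $J(v)$ is the Jacobian of $\omega^{-1}$, comparable to $(1+|y|^2)^{d}$ with $y=\omega^{-1}(v)$. The first weight is bounded because $|x|<R_0$. The remaining powers of $\langle y\rangle$ are absorbed by the decay $O(\hbar^\infty\langle y\rangle^{-\infty})$, since here $|x-y|\ge 1$. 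Derivatives in $v$ cost at most powers of $\langle y\rangle$, because the derivatives of $\omega^{-1}$ near $\mathsf{NP}$ grow polynomially in $\langle y\rangle$. Hence this kernel extends by zero to a $C^{\infty}(\mathbb{S}^d\times\mathbb{S}^d)$ function that is $O(\hbar^\infty)$ in every $C^k$. Such an operator is a residual element of $\Psi_\hbar^{-\infty}(\mathbb{S}^d)$.

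\emph{The near piece.} The kernel of $\operatorname{Op}_\hbar^L(a)\chi$ has compact support in $\{|x|<R_0\}\times\operatorname{supp}\chi$. It therefore pulls back to a kernel compactly supported in $U\times U$, where $U=\omega(\{|x|<R_0+2\})\Subset\mathbb{S}^d_{\neq\mathsf{NP}}$. Composition by $E$ and $R$ does nothing to it. The multiplication weights $\big(\frac{|\omega^{-1}|^2+1}{2}\big)^{\pm d/2}$ are smooth on $U$, so multiplying by them keeps the operator in $\Psi^0_\hbar$ and does not change the principal symbol. The principal symbol is unchanged because the weights appear with opposite powers on the left and right and commute with the operator to leading order.

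By the standard change-of-variables theorem for semiclassical pseudodifferential operators (e.g.\ \cite[Theorem 9.3]{Z12}), conjugating by the pullback under the diffeomorphism $\omega^{-1}\colon U\to\omega^{-1}(U)$ gives an operator in $\Psi^0_\hbar$ on $U$. Its principal symbol is the pullback of $a\chi$ (which equals $a$ on $\operatorname{supp}a$) under the symplectic lift $(u,\eta)\mapsto(\omega^{-1}(u),(d_u\omega^{-1})^{-T}\eta)$. The $L^2$ density normalization inside $\widehat{\omega}$ is the half-density correction, which only affects subprincipal terms.

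\emph{Assembling.} The two pieces together give $B_\hbar\in\Psi^0_\hbar(\mathbb{S}^d)$. Its principal symbol is compactly supported in $T^*U$, and is therefore zero over $\mathsf{NP}$ and smooth on all of $T^*\mathbb{S}^d$, as claimed. The main obstacle is the far-piece estimate: one must verify that the polynomial growth in $\langle y\rangle$ coming from the Jacobian, the weights $\widehat{\omega}$ and the derivatives of $\omega^{-1}$ near $\mathsf{NP}$ is beaten by the off-diagonal $\hbar^\infty\langle x-y\rangle^{-\infty}$ decay. The argument above handles this by using that $|x|$ stays bounded while $|y|\to\infty$, so $\langle x-y\rangle\sim\langle y\rangle$.
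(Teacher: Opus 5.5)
Your proposal is correct and follows essentially the paper's argument. A spatial cutoff splits $B_\hbar$ into two pieces. The first is pseudodifferential in the stereographic chart $\omega^{-1}$; the paper calls this ``by definition,'' where you cite the change-of-variables theorem. The second is a remainder whose kernel, by integration by parts in $\xi$, is $O(\hbar^\infty\langle y\rangle^{-\infty})$ with all derivatives, and hence extends by zero across $\mathsf{NP}$ to an element of $\hbar^\infty\Psi_\hbar^{-\infty}(\mathbb{S}^d)$. The paper phrases that last estimate in the inverted coordinate $y\mapsto -y/|y|^2$, as $|\partial_x^\alpha\partial_y^\beta R_\hbar|\le C\hbar^N|y|^M$; this is the same bound as your $\langle x-y\rangle\sim\langle y\rangle$ argument.
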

                                              \begin{rem}\label{rem:ext}
                                              We prove this
                                              lemma for left
                                              quantization
                                              $\operatorname{Op}_{\hbar}^L$
                                              because the
      Schwartz kernel is
                                              easier to manipulate, but
                                              the same statement is true
                                              for Weyl
                                              quantization (with
                                              a more involved
                                              proof). One can
                                              avoid thinking
                                              about this
                                              complication by
                                              switching to left
                                              quantization in
                                              the proof of
                                              Theorem
                                              \ref{theo:1} by
                                              $$\operatorname{Op}_{\hbar}^L(a)=\operatorname{Op}_{\hbar}(a)+O_{L^2 \to L^2}(\hbar), $$
which can be found in \cite[Proposition 2.23]{N25}.
                                              \end{rem}
                                            
                                              \begin{proof}
Let $L\gg 0$ be such that if $|x|>L$ then $a(x,\xi)=0$ for all
$\xi$. Let $\zeta \in C_c^{\infty}(\mathbb{R}^d;[0,1])$ be $1$ on
$|x| \leq L+1$ and define $\chi \coloneqq \zeta \circ \omega^{-1}
\in C^{\infty}(\mathbb{S}^d).$ Since $\zeta \operatorname{Op}_\hbar^L(a)=\operatorname{Op}_\hbar^L(a)$,
$$B_{\hbar}=\chi B_{\hbar}=\chi B_{\hbar}\chi+\underbrace{\chi B_{\hbar}(1-\chi)}_{\eqqcolon R_{\hbar}},$$
we observe that $\chi B_{\hbar}\chi \in
\Psi_{\hbar}^0(\mathbb{S}^d)$ by definition. It suffices to show
$R_{\hbar}\in \hbar^{\infty}\Psi_{\hbar}^{-\infty}$. The
Schwartz kernel of $\operatorname{Op}_{\hbar}^L(a)$ is given by
$K_{\hbar}(x,y)=\frac{1}{(2\pi
  \hbar)^d}\int_{\mathbb{R}^d}a(x,\xi)e^{-\frac{i}{\hbar}(x-y)\xi}d\xi$,
so the Schwartz kernel of $R_{\hbar}$ is given by
\begin{equation}\label{eq:remain}
R_{\hbar}(u,v)=\frac{\chi(u)(1-\chi(v))\langle \omega^{-1}(u)\rangle^d \langle \omega^{-1}(v)\rangle^d}{2^d(2\pi \hbar)^d} \int_{\mathbb{R}^d}a(\omega^{-1}(u),\xi)e^{-\frac{i}{\hbar}(\omega^{-1}(u)-\omega^{-1}(v))\xi}d\xi 
\end{equation}
for $u,v \in \mathbb{S}_{\neq \mathsf{NP}}^d$ and
$R_{\hbar}(\mathsf{NP},\bullet)=R_{\hbar}(\bullet,\mathsf{NP})=0$,
where $\langle \bullet \rangle \coloneqq (1+|\bullet|^2)^{1/2}$ and we accounted for the Jacobian factor $(
\frac{|\omega^{-1}(v)|^2+1}{2})^{d}$ coming from the change of
variables from $\mathbb{R}^d$ to $\mathbb{S}_{\neq
  \mathsf{NP}}^d$. 

We first show $R_{\hbar}\in C^{\infty}(\mathbb{S}^d \times
\mathbb{S}^d)$. It is clear that $R_{\hbar}\in C^{\infty}(\mathbb{S}_{\neq \mathsf{NP}}^d \times
\mathbb{S}_{\neq \mathsf{NP}}^d) \cap C^{\infty}(U_{\mathsf{NP}}\times \mathbb{S}^d)$ since $R_{\hbar}(u,v)\equiv 0$
when $u$ is in a sufficiently small neighborhood of
$\mathsf{NP}$ (which we call $U_{\mathsf{NP}}$). On the other hand, to see $R_{\hbar}\in C^{\infty}(\mathbb{S}_{\neq \mathsf{NP}}^d \times U_{\mathsf{NP}})$, we set $u=\omega(x)$  and $v=\omega_{\mathsf{SP}}(y)\coloneqq \omega(-y/|y|^2)$ so that for $y \neq 0$
$$R_{\hbar}=\zeta(x)\langle x \rangle^d\frac{1}{2^d(2\pi \hbar)^d}\Big( \frac{1+|y|^2}{|y|^2}\Big)^{d/2}\int_{\mathbb{R}^d}a(x,\xi)e^{-\frac{i}{\hbar}(x+y/|y|^2)\xi}d\xi.$$
We first observe that on $ \operatorname{supp} R_{\hbar}$, $x$
lies in a compact set, which implies
$|x+\frac{y}{|y|^2}|>\frac{c}{|y|}$ for some $c>0$ and for all
$0\neq y\in\omega_{\mathsf{SP}}^{-1}(U_{\mathsf{NP}})$. Using
repeated integration by parts with the operator $L_{\hbar}
\coloneqq- \frac{\hbar(x+y/|y|^2)}{i|x+y/|y||^2}\partial_{\xi}$, it can be shown that for all multi-indeces $\alpha,\beta$ and $N,M>0$
\begin{equation}\label{eq:Rest}
|\partial_x^{\alpha}\partial_y^{\beta}R_{\hbar}|\leq C_{\alpha,\beta,N,M}\hbar^N|y|^M, \qquad \text{for all } x \in \mathbb{R}^d,\ 0 \neq y \in \omega_{\mathsf{SP}}^{-1}(U_{\mathsf{NP}}).
\end{equation}
Indeed, any potential terms of $1/|y|$ coming from differentiating in $y$ can be counteracted by a sufficient amount of integration by parts since $|x+\frac{y}{|y|^2}|^{-1}\leq \frac{|y|}{c}$. The above estimate proves that $R_{\hbar}$ extends smoothly to $0$ over $y=0$, proving $R_{\hbar}\in C^{\infty}(\mathbb{S}_{\neq \mathsf{NP}}^d \times U_{\mathsf{NP}})$ and thus $R_{\hbar}\in C^{\infty}(\mathbb{S}^d \times
\mathbb{S}^d)$.
\par Now we need to show that all of the
$C^{\infty}$-seminorms of $R_{\hbar}$ are
$O(\hbar^{\infty})$. That is, if $\{(U_j,\phi_j)\}_{j=1}^4$ is
the finite atlas of $\mathbb{S}^d\times \mathbb{S}^d$ given by
Cartesian products of $\mathbb{S}_{\neq \mathsf{NP}}^d$ and
$U_{\mathsf{NP}}$ (along with products of $\omega_{\mathsf{NP}}^{-1}$
and $\omega_{\mathsf{SP}}^{-1}$), we need to show that for a partition of unity $\chi_j\in C_c^{\infty}(U_j;[0,1])$ subordinate to this atlas, we have
\begin{equation}\label{eq:seminorm}
\lVert R_{\hbar}\rVert_k \coloneqq \sum_{j=1}^4\sum_{|\alpha|\leq k}\sup_{x \in \phi_j(U_j)}\big| \partial^{\alpha}\big( (\chi_jR_{\hbar})\circ \phi_j^{-1}\big)(x)| = O(\hbar^{\infty}).
\end{equation}
The terms in the sum involving the two charts of the form $U_{\mathsf{NP}}\times \bullet$ are trivially $0$, and the term with the chart $\mathbb{S}_{\neq \mathsf{NP}}^d \times U_{\mathsf{NP}}$ can be shown to be $O(\hbar^{\infty})$ by \eqref{eq:Rest}. The term on the last chart $\mathbb{S}_{\neq \mathsf{NP}}^d \times \mathbb{S}_{\neq \mathsf{NP}}^d$ is $O(\hbar^{\infty})$ by a similar integration by parts argument on \eqref{eq:remain} where $u=\omega(x),v=\omega(y)$, and we are done.
\end{proof}

The main result of this section is the following lemma.
\begin{lem}\label{lem:ext2}
Suppose $a \in S(1)$ and $a \circ
(\omega^*)^{-1}|_{S^*\mathbb{S}_{\neq \mathsf{NP}}^d}$ extends to a smooth function $a_{\omega}$ on $S^{*}\mathbb{S}^d$, where
$\omega^*$ is defined in \eqref{eq:mos}. Then there exists
$B_{\hbar}\in \Psi_{\hbar}^0(\mathbb{S}^d)$ satisfying
\begin{equation}\label{eq:}
(B_{\hbar}-\widehat{\omega} \circ \operatorname{Op}_{\hbar}(a) \circ \widehat{\omega}^{-1})\Pi_{\hbar}=o_{L^2\to L^2}(1),
\end{equation}
as $\hbar \to 0$, where $\Pi_{\hbar}:L^2(\mathbb{S}^d) \to
L^2(\mathbb{S}^d)$ is orthogonal projection onto the
1-eigenspace of $-\hbar^2\Delta_{\mathbb{S}^d}$ and
$\widehat{\omega}$ is defined in
\eqref{eq:fockmaps}. Furthermore, $b \coloneqq
\sigma_{\hbar}(B_{\hbar}) \in C_c^{\infty}(T^*\mathbb{S}^d)$
satisfies
\begin{equation}\label{eq:princb}
b|_{S^*\mathbb{S}^d}=a_{\omega}.
\end{equation}
\end{lem}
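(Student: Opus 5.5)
We split $a$ into a compactly supported piece and a piece living near the north pole of the sphere. Choose $\zeta\in C_c^\infty(\mathbb{R}^d;[0,1])$ equal to $1$ on a large ball $\{|x|\le L\}$, and write $a=\zeta a+(1-\zeta)a$. The part $\zeta a$ is not compactly supported in $\xi$, so Lemma \ref{lem:ext1} does not apply directly. However, the microlocal structure of $\Pi_\hbar$ lets us insert a cutoff $\chi(|\eta|^2_g)$, with $\chi\in C_c^\infty$ equal to $1$ near $1$. On $\omega(\{|x|\le L+1\})$ the stereographic pullback \eqref{eq:mos} is a diffeomorphism of cotangent bundles with fibre norm comparable to $|\xi|$. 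By Egorov-type conjugation under $\widehat\omega$ (a change of variables times a smooth weight), $\widehat\omega\operatorname{Op}_\hbar(\zeta a)\widehat\omega^{-1}$ agrees, up to $O(\hbar)$ and smoothing remainders, with a compactly microlocalized operator in $\Psi_\hbar^0(\mathbb{S}^d)$ once composed with $\operatorname{Op}^{\mathbb{S}^d}_\hbar(\chi(|\eta|_g^2))\Pi_\hbar=\Pi_\hbar+O(\hbar^\infty)$. This is essentially Lemma \ref{lem:ext1} applied to $\zeta a\,\chi(|\cdot|^2)$ in local coordinates, together with Remark \ref{rem:ext} for Weyl versus left quantization. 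Its principal symbol on $S^*\mathbb{S}^d$ is $\zeta\circ\omega^{-1}\cdot a_\omega$.

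The main step, and the expected obstacle, is the part $(1-\zeta)a$, which is supported near $\mathsf{NP}$ but has no decay as $|x|\to\infty$. The hypothesis only says that $a\circ(\omega^*)^{-1}$ extends smoothly from the cosphere bundle, not from all of $T^*\mathbb{S}^d$. We therefore replace $a$ by a symbol that is genuinely smooth near $T^*_{\mathsf{NP}}\mathbb{S}^d$. Take any $\tilde b\in C_c^\infty(T^*\mathbb{S}^d)$ with $\tilde b|_{S^*\mathbb{S}^d}=a_\omega$, for instance $a_\omega(u,\eta/|\eta|)\chi(|\eta|^2)$. Set $B_\hbar:=\operatorname{Op}_\hbar^{\mathbb{S}^d}(\tilde b)$, so that \eqref{eq:princb} holds by construction. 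It then suffices to show that
\[
T_\hbar:=\big(\widehat\omega\operatorname{Op}_\hbar(a)\widehat\omega^{-1}-B_\hbar\big)\Pi_\hbar=o_{L^2\to L^2}(1).
\]
Away from $\mathsf{NP}$ this follows from the first paragraph and the standard fact that $\operatorname{Op}(c)\Pi_\hbar=o(1)$ whenever $c|_{S^*}=0$, which holds by ellipticity of $-\hbar^2\Delta-1$ off $S^*$.

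Near $\mathsf{NP}$, we localize by $\psi\in C^\infty(\mathbb{S}^d)$ supported in a $\delta$-neighbourhood of $\mathsf{NP}$. The difference symbol $c:=a\circ(\omega^*)^{-1}-\tilde b$ vanishes on $S^*$ and is bounded in every chart away from $\mathsf{NP}$ uniformly. We do not control derivatives of $c$ at $\mathsf{NP}$, but the sup of $|c|$ over a $\delta$-neighbourhood of $S^*$ near $\mathsf{NP}$ tends to $0$ as $\delta\to0$ by continuity of $a_\omega$. The plan is to bound $\|\psi T_\hbar\|$ using two ingredients. The first is the sharp Gårding/Calderón–Vaillancourt estimate, which gives norm $\le \sup|c|+O_\delta(\hbar^{1/2})$ after microlocalizing to $\big||\eta|-1\big|<\delta$. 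The second is the standard eigenfunction concentration bound: spherical harmonics satisfy $\|\psi_\delta\phi\|_{L^2}\le C\delta^{1/2}$, uniformly in the degree, for $\psi_\delta$ supported in a $\delta$-ball; this is the $L^2$ small-ball estimate, true on $\mathbb{S}^d$ with $d\ge2$, and even $O(\delta)$ for generic balls via Sogge-type bounds. This bound absorbs the region where neither symbol is well controlled, namely a shrinking ball around $\mathsf{NP}$ where we cut off both operators. Together these give $\limsup_{\hbar\to0}\|T_\hbar\|\le C(\sup_{\delta}|c|+\delta^{1/2})$, and letting $\delta\to0$ yields $o(1)$.

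The delicate point is making sense of $\widehat\omega\operatorname{Op}_\hbar(a)\widehat\omega^{-1}$ near $\mathsf{NP}$ via the inverted chart $\omega_{\mathsf{SP}}$, where $x=-y/|y|^2$. There, $\operatorname{Op}_\hbar(a)$ conjugated by the inversion is a Fourier integral operator whose symbol in $y$ has derivatives growing like $|y|^{-k}$. We handle this by using $S_{\delta}$-type calculus only on the annulus $\{\epsilon\le|y|\le\delta\}$, where the symbol lies in $S(1)$ with $\epsilon$-dependent constants, and by removing $\{|y|<\epsilon\}$ with the concentration bound, choosing $\epsilon=\epsilon(\hbar)\to0$ slowly.
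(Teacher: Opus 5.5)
Your outline closes and uses the same two ingredients as the paper: pseudodifferential calculus plus ellipticity of $-\hbar^2\Delta_{\mathbb{S}^d}-1$ off $S^*\mathbb{S}^d$ away from $\mathsf{NP}$, and the uniform small-ball bound $\|\mathbf{1}_{B(\mathsf{NP},\delta)}\Pi_{\hbar}\|_{L^2\to L^2}=O(\delta^{1/2})$ at $\mathsf{NP}$. Where you differ is the near-$\mathsf{NP}$ layer (sharp G{\aa}rding with $\sup|c|$, $S_\delta$-type calculus on the annulus $\{\epsilon\le|y|\le\delta\}$, $\epsilon=\epsilon(\hbar)$). That layer is unnecessary, because the paper makes no symbolic comparison near $\mathsf{NP}$ at all. $Q_{\hbar}=B_{\hbar}-\widehat{\omega}\operatorname{Op}_{\hbar}(a)\widehat{\omega}^{-1}$ is $L^2$-bounded uniformly in $\hbar$, by unitarity of $\widehat{\omega}$ and Calder\'on--Vaillancourt. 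So for cutoffs $\varphi_\delta,\psi_\delta$ near $\mathsf{NP}$ with $\psi_\delta=1$ near $\operatorname{supp}\varphi_\delta$, one gets $\|\varphi_\delta Q_{\hbar}\psi_\delta\Pi_{\hbar}\|\le C\|\psi_\delta\Pi_{\hbar}\|=O(\delta^{1/2})$. Meanwhile $\varphi_\delta Q_{\hbar}(1-\psi_\delta)=O_\delta(\hbar^\infty)$ by disjoint supports; for the conjugated term this is checked on $\mathbb{R}^d$, where $\varphi_\delta\circ\omega$ and $\psi_\delta\circ\omega$ are $S(1)$ multipliers with separated supports. Off the $\delta$-ball everything lives on a compact subset of $\mathbb{S}^d_{\neq\mathsf{NP}}$. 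After inserting $\chi(-\hbar^2\Delta_{\mathbb{S}^d})$, the difference is compactly microlocalized with symbol vanishing on $S^*\mathbb{S}^d$, so it factors through $-\hbar^2\Delta_{\mathbb{S}^d}-1$ and contributes $O_\delta(\hbar)$. Taking $\limsup_{\hbar\to0}$ at fixed $\delta$ and then $\delta\to0$ finishes the proof. There is no $\epsilon(\hbar)$ to tune and no need to understand $\widehat{\omega}\operatorname{Op}_{\hbar}(a)\widehat{\omega}^{-1}$ in the inverted chart; your annulus is just this ``away'' region with $\epsilon$ in place of $\delta$. If you keep the extra layer, two justifications need fixing. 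First, smallness of $c$ near $S^*\mathbb{S}^d$ does not follow from continuity of $a_\omega$, which only lives on $S^*\mathbb{S}^d$. It follows from $|\partial_\xi a|\le C$ and $|\xi|=\frac{2}{1+|x|^2}|\eta|_g$, which give $|c|\le 2C\bigl|\,|\eta|_g-1\bigr|$ where $\chi(|\eta|_g^2)=1$. Second, ``even $O(\delta)$ for generic balls'' is false as a bound uniform over the eigenspace. The eigenspace is $\operatorname{SO}(d+1)$-invariant and zonal harmonics saturate $\delta^{1/2}$ at every center, $\mathsf{NP}$ included.
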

The idea of the proof is to use that $\widehat{\omega}
\circ \operatorname{Op}_{\hbar}(a) \circ \widehat{\omega}^{-1}$
is (morally) a pseudodifferential operator away from $\mathsf{NP}$, microlocally on
$S^*\mathbb{S}^d$, and we use an
eigenfunction concentration estimate to handle the $L^2$ mass at
$\mathsf{NP}$. It is worth noting that $\widehat{\omega} \circ
\operatorname{Op}_{\hbar}(a) \circ \widehat{\omega}^{-1}$ is
\textit{not} generally in $\Psi_{\hbar}^{0}(\mathbb{S}_{\neq
  \mathsf{NP}}^d)$ since the class $S(1)$ is not coordinate
invariant, but microlocalizing to the cosphere bundle
$S^*\mathbb{S}^d$ solves this issue, as proved below. 
\begin{proof}
Let $\theta \in C_c^{\infty}((-1,1);[0,1])$ be $1$ near the origin, and define $ b\in C_c^{\infty}(T^*\mathbb{S}^d)$ by
\begin{equation}\label{eq:BB}
b(u,\eta) \coloneqq \theta(|\eta|_g^2-1)a_{\omega}\Big(u,\frac{\eta}{|\eta|_{g}}\Big), \quad B_{\hbar} \coloneqq \operatorname{Op}_{\hbar}^{\mathbb{S}^d}(b),
\end{equation}
where $|\bullet|_g$ denotes the norm on covectors induced by the
round metric on $\mathbb{S}^d$, and
$\operatorname{Op}_{\hbar}^{\mathbb{S}^d}$ is a semiclassical
quantization procedure on the sphere. Define
\begin{equation}\label{eq:dd}
Q_{\hbar}\coloneqq B_{\hbar}-\widehat{\omega}  \circ \operatorname{Op}_{\hbar}(a) \circ \widehat{\omega}^{-1}.
\end{equation}
We want to show $Q_{\hbar}\Pi_{\hbar}=o_{L^2 \to L^2}(1)$. We
analyze this at and away from $\mathsf{NP}$ separately. Let
$\delta>0$ and $\varphi_{\delta} \in C^{\infty}(\mathbb{S}^d)$
be $1$ near
$\mathsf{NP}$ and supported in $\operatorname{dist}_{\mathbb{S}^d}(\mathsf{NP},\bullet)<\delta$. We split
\begin{equation}\label{eq:bananasplit}
Q_{\hbar}\Pi_{\hbar}=\varphi_{\delta}Q_{\hbar}\Pi_{\hbar}+(1-\varphi_{\delta})Q_{\hbar}\Pi_{\hbar}.
\end{equation}
\underline{\emph{Step 1: Estimate away from $\mathsf{NP}$: }} From
\eqref{eq:bananasplit}, we estimate
$(1-\varphi_{\delta})Q_{\hbar}\Pi_{\hbar}$ by showing
\begin{equation}\label{eq:step1main}
\big\lVert (1-\varphi_{\delta})Q_{\hbar}\Pi_{\hbar} \big\rVert_{L^2 \to L^2}=O_{\delta}(\hbar)
\end{equation}
Let
$\varphi_{\delta}' \in C^{\infty}(\mathbb{S}^d)$ be supported where $\varphi_{\delta}=1$ and $1$ near $\mathsf{NP}$. We write
\begin{equation}\label{eq:split2}
(1-\varphi_{\delta})Q_{\hbar}\Pi_{\hbar}=\underbrace{(1-\varphi_{\delta})Q_{\hbar}\varphi_{\delta}'\Pi_{\hbar}}_{\eqqcolon (\mathrm{I})}+\underbrace{(1-\varphi_{\delta})Q_{\hbar}(1-\varphi_{\delta}')\Pi_{\hbar}}_{\eqqcolon \mathrm{(II)}}.
\end{equation}
For (I), since the supports of $1-\varphi_{\delta}$ and
$\varphi_{\delta}'$ are disjoint, it can be shown that
$$\big\lVert(1-\varphi_{\delta})Q_{\hbar}\varphi_{\delta}'\big\rVert_{L^2
  \to L^2}=O_{\delta}(\hbar^{\infty}).$$
Indeed, $Q_{\hbar}$ is comprised of $B_{\hbar}$ and $\widehat{\omega} \circ \operatorname{Op}_{\hbar}(a) \circ \widehat{\omega}^{-1}$
by \eqref{eq:dd}. For $B_{\hbar}$, this follows from the usual
pseudo-local property of $\Psi_{\hbar}^0(\mathbb{S}^d)$
\cite[(14.2.6)]{Z12}, and for $\widehat{\omega}  \circ \operatorname{Op}_{\hbar}(a) \circ \widehat{\omega}^{-1}$, after conjugating by
$\widehat{\omega}$, this follows from a similar statement in the
$S(1)$ calculus on $\mathbb{R}^d$ \cite[Theorem 4.25]{Z12}.
\par The operator (II) requires more care. Because $-\hbar^2
\Delta_{\mathbb{S}^d}\Pi_{\hbar}=\Pi_{\hbar}$, functional
calculus gives
$\chi(-\hbar^2\Delta_{\mathbb{S}^d})\Pi_{\hbar}=\Pi_{\hbar}$,
where $\chi \in C_c^{\infty}(\mathbb{R};[0,1])$ is supported
near $1$ such that $\chi=1$ near $1$. It suffices to show
\begin{equation}\label{eq:suffice}
\big\lVert
(1-\varphi_{\delta})Q_{\hbar}(1-\varphi_{\delta}')\chi(-\hbar^2\Delta_{\mathbb{S}^d})\Pi_{\hbar}\big\rVert_{L^2
  \to L^2}=O_{\delta}(\hbar).
\end{equation}
To this end, we recall $\chi(-\hbar^2\Delta_{\mathbb{S}^d}) \in
\Psi_{\hbar}^{\operatorname{comp}}(\mathbb{S}^d)$ with
$\operatorname{WF}_{\hbar}(\chi(-\hbar^2\Delta_{\mathbb{S}^d}))
\subset (|\bullet|_g^2)^{-1}(\operatorname{supp}\chi)$ (see \cite[Theorem 14.9]{Z12}), and since
$1-\varphi_{\delta}'$ is supported away from the north pole,
\begin{equation}\label{eq:weirdcompact}
(1-\varphi_{\delta}')\chi(-\hbar^2\Delta_{\mathbb{S}^d}) \in \Psi_{\hbar}^{\mathrm{comp}}(\mathbb{S}_{\neq \mathsf{NP}}^d),
\end{equation}
where $\Psi_{\hbar}^{\mathrm{comp}}$ denotes compactly microlocalized semiclassical pseudodifferential operators (see \cite[Definition E.28]{DZ19}).
Then $$T_{\hbar}\coloneqq \widehat{\omega}^{-1} \circ
(1-\varphi_{\delta}')\chi(-\hbar^2\Delta_{\mathbb{S}^d}) \circ
\widehat{\omega} \in \Psi_{\hbar}^{\mathrm{comp}}(\mathbb{R}^d).$$ Define
$\zeta \in C_c^{\infty}(T^*\mathbb{R}^d)$ to be $1$ near
$\operatorname{WF}_{\hbar}(T_{\hbar})$. Then
$\operatorname{Op}_{\hbar}(\zeta)T_{\hbar}=T_{\hbar}+O_{\delta}(\hbar^{\infty})_{L^2
  \to L^2}$, so for \eqref{eq:suffice}, it suffices to show
\begin{equation}\label{eq:suffice2}
\big\lVert
(1-\varphi_{\delta})\tilde{Q}_{\hbar}(1-\varphi_{\delta}')\chi(-\hbar^2\Delta_{\mathbb{S}^d})\Pi_{\hbar}\big\rVert_{L^2
  \to L^2}=O_{\delta}(\hbar),
\end{equation}
where $\tilde{Q}_{\hbar} \coloneqq
B_{\hbar}-\widehat{\omega}\circ
\operatorname{Op}_{\hbar}(a\zeta) \circ
\widehat{\omega}^{-1}$. The upshot of this is that
$$\tilde{B}_{\hbar} \coloneqq (1-\varphi_{\delta})\tilde{Q}_{\hbar}(1-\varphi_{\delta}')\chi(-\hbar^2\Delta_{\mathbb{S}^d})
\in \Psi_{\hbar}^{\operatorname{comp}}(\mathbb{S}^d), $$
with principal symbol
$$\sigma_{\hbar}(\tilde{B}_{\hbar})(u,\eta)= \big(1-\varphi_{\delta}(u)\big)\big(b(u,\eta)-a\circ
(\omega^*)^{-1}(u,\eta)\big)\chi(|\eta|_g^2) \in C_c^{\infty}(T^*\mathbb{S}^d).$$
This symbol
vanishes on $S^*\mathbb{S}^d$ by construction, and since $d (|\eta|_{g}^2-1)\neq
0$ near $S^*\mathbb{S}^d$, we see
\begin{equation}\label{eq:almost2}
(1-\varphi_{\delta})\tilde{Q}_{\hbar}(1-\varphi_{\delta}')\chi(-\hbar^2\Delta_{\mathbb{S}^d})=\operatorname{Op}_{\hbar}^{\mathbb{S}^d}(c)(-\hbar^2\Delta_{\mathbb{S}^d}-1)+O_{\delta}(\hbar)_{L^2
  \to L^2},
\end{equation}
where $c(u,\eta) \coloneqq
\sigma_{\hbar}(\tilde{B}_{\hbar})(u,\eta)/(|\eta|_g^2-1) \in
C_c^{\infty}(T^*\mathbb{S}^d)$. By applying $\Pi_{\hbar}$ to
\eqref{eq:almost2}, we obtain \eqref{eq:suffice2} and hence
\eqref{eq:suffice}.\\
\underline{\emph{Step 2: Estimate near $\mathsf{NP}$: }} From
\eqref{eq:bananasplit}, we estimate
$\varphi_{\delta}Q_{\hbar}\Pi_{\hbar}$ by showing
\begin{equation}\label{eq:step2main}
\big\lVert \varphi_{\delta} Q_{\hbar}\Pi_{\hbar} \big\rVert_{L^2 \to L^2}=O(\delta^{1/2})+O_{\delta}(\hbar^{\infty}),
\end{equation}
where $O(\delta^{1/2})$ is independent of $\hbar$ for $\hbar<\delta$. Let
$\psi_{\delta} \in C^{\infty}(\mathbb{S}^d)$ be 1 near $\operatorname{supp} \varphi_{\delta}$ and supported in $\operatorname{dist}_{\mathbb{S}^d}(\bullet,\mathsf{NP})<2\delta$. We write
\begin{equation}\label{eq:splitstep2}
\varphi_{\delta}Q_{\hbar}\Pi_{\hbar}=\underbrace{\varphi_{\delta}Q_{\hbar}(1-\psi_{\delta})\Pi_{\hbar}}_{\eqqcolon (\mathrm{III})}+\underbrace{\varphi_{\delta}Q_{\hbar}\psi_{\delta}\Pi_{\hbar}}_{\eqqcolon \mathrm{(IV)}}.
\end{equation}
The operator (III) can be shown to be in
$O_{\delta}(\hbar^{\infty})_{L^2 \to L^2}$ by the same argument
as (I) in the previous step. \par For (IV), we first note that
$\varphi_{\delta}Q_{\hbar}$ is $L^2 \to L^2$ bounded independent
of $\hbar,\delta$ using the easy bound $\varphi_{\delta}\leq
1$ and the fact that $B_{\hbar},\widehat{\omega} \circ \operatorname{Op}_{\hbar}(a) \circ \widehat{\omega}^{-1}$ are both $L^2 \to L^2$ bounded independent
of $\hbar$ by Calder\'on-Vaillancourt. For $\psi_{\delta}\Pi_{\hbar}$, we use the ``trivial''
$L^2$-concentration bound of eigenfunctions
\begin{equation}\label{eq:presogge}
\lVert \mathbf{1}_{\operatorname{dist}_{\mathbb{S}^d}(\bullet,\mathsf{NP})<2\delta}\Pi_{\hbar}\rVert_{L^2\to L^2} =O(\delta^{1/2}),\quad \hbar < \delta < \pi/2,
\end{equation}
where the implied constant is independent of
$\hbar$ (see \cite[(4.1)]{S16}). This implies
\begin{equation}\label{eq:sogge}
\lVert \psi_{\delta}\Pi_{\hbar}\rVert_{L^2\to L^2} =O(\delta^{1/2}),\quad \hbar < \delta < \pi/2,
\end{equation}
Overall, for (IV) we then have
\begin{equation}\label{eq:(IV)}
\lVert \varphi_{\delta}Q_{\hbar}\psi_{\delta}\Pi_{\hbar}\rVert_{L^2 \to L^2}=O(\delta^{1/2}),
\end{equation}
proving \eqref{eq:step2main}.\\
\underline{\emph{Step 3: Combining the estimates: }}
To finish off the proof, \eqref{eq:step1main} and
\eqref{eq:step2main} together give
$$\lVert Q_{\hbar}\Pi_{\hbar}\rVert_{L^2 \to
  L^2}\leq O(\delta^{1/2})+O_{\delta}(\hbar)$$
for any $\delta\in (\hbar,\pi/2)$. Taking $\limsup_{\hbar \to 0}$ first and then $\delta \to 0$ proves $\lVert Q_{\hbar}\Pi_{\hbar}\rVert_{L^2 \to
  L^2}=o(1)$ as $\hbar \to 0$, as desired.
\end{proof}

\bibliographystyle{alpha-reverse}
\bibliography{ultimaterefs}

\begin{thebibliography}{Loh25b}

\bibitem[AM22]{AM22}
Arnaiz, V. and Maci\`a, F.
\newblock Localization and delocalization of eigenmodes of harmonic
  oscillators.
\newblock {\em Proc. Amer. Math. Soc.}, 150(5):2195--2208, 2022.

\bibitem[Arn89]{A89}
Arnol'd, V.~I.
\newblock {\em Mathematical methods of classical mechanics}, volume~60 of {\em
  Graduate Texts in Mathematics}.
\newblock Springer-Verlag, New York, second edition, 1989.
\newblock Translated from the Russian by K. Vogtmann and A. Weinstein.

\bibitem[Arn20]{A20}
Arnaiz, V.
\newblock Spectral stability and semiclassical measures for renormalized {KAM}
  systems.
\newblock {\em Nonlinearity}, 33(6):2562--2591, 2020.

\bibitem[BI66]{BI66}
Bander, M. and Itzykson, C.
\newblock Group theory and the hydrogen atom. {I}, {II}.
\newblock {\em Rev. Modern Phys.}, 38:330--345; 346--358, 1966.

\bibitem[CdV85]{CdV85}
Colin~de Verdi\`ere, Y.
\newblock Ergodicit\'e{} et fonctions propres du laplacien.
\newblock {\em Comm. Math. Phys.}, 102(3):497--502, 1985.

\bibitem[CJK08]{CJK08}
Castella, F.~c., Jecko, T., and Knauf, A.
\newblock Semiclassical resolvent estimates for {S}chr\"odinger operators with
  {C}oulomb singularities.
\newblock {\em Ann. Henri Poincar\'e}, 9(4):775--815, 2008.

\bibitem[DM78]{DM78}
Dellacherie, C. and Meyer, P.-A.
\newblock {\em Probabilities and potential}, volume~29 of {\em North-Holland
  Mathematics Studies}.
\newblock North-Holland Publishing Co., Amsterdam-New York, 1978.

\bibitem[Dya22]{D22}
Dyatlov, S.
\newblock Around quantum ergodicity.
\newblock {\em Ann. Math. Qu\'e.}, 46(1):11--26, 2022.

\bibitem[DZ19]{DZ19}
Dyatlov, S. and Zworski, M.
\newblock {\em Mathematical theory of scattering resonances}, volume 200 of
  {\em Graduate Studies in Mathematics}.
\newblock American Mathematical Society, Providence, RI, 2019.

\bibitem[Eul67]{E74}
Euler, L.
\newblock De motu rectilineo trium corporum se mutuo attrahentium.
\newblock {\em Novi Comm. Acad. Sci. Petrop.}, 11:144--151, 1767.

\bibitem[Foc35]{F35}
Fock, V.
\newblock {Z}ur {T}heorie des {W}asserstoffatoms.
\newblock {\em Zeitschrift f\"{u}r Physik}, 98:145--154, 1935.

\bibitem[GK91]{GK91}
G{\'e}rard, C. and Knauf, A.
\newblock Collisions for the quantum {C}oulomb {H}amiltonian.
\newblock {\em Comm. Math. Phys.}, 143(1):17--26, 1991.

\bibitem[Gol75]{G75}
Goldstein, H.
\newblock {Prehistory of the ``Runge–Lenz" vector}.
\newblock {\em American Journal of Physics}, 43(8):737--738, 1975.

\bibitem[Gol76]{G76}
Goldstein, H.
\newblock {More on the prehistory of the Laplace or Runge–Lenz vector}.
\newblock {\em American Journal of Physics}, 44(11):1123--1124, 1976.

\bibitem[GS90]{GS90}
Guillemin, V. and Sternberg, S.
\newblock {\em Variations on a theme by {K}epler}, volume~42 of {\em American
  Mathematical Society Colloquium Publications}.
\newblock American Mathematical Society, Providence, RI, 1990.

\bibitem[GW24]{GW24}
Galkowski, J. and Wunsch, J.
\newblock Propagation for {S}chr\"{o}dinger operators with potentials singular
  along a hypersurface.
\newblock {\em Arch. Ration. Mech. Anal.}, 248(3):Paper No. 37, 28, 2024.

\bibitem[Ham47]{H47}
Hamilton, W.~R.
\newblock The hodograph or a new method of expressing in symbolic language the
  {N}ewtonian law of attraction.
\newblock {\em Proc. Royal Irish Acad.}, 3(19):344--353, 1847.

\bibitem[HM23]{HM23}
Hillairet, L. and Marzuola, J.~L.
\newblock Eigenvalue spacing for 1{D} singular {S}chr\"{o}dinger operators.
\newblock {\em Asymptot. Anal.}, 133(1-2):267--289, 2023.

\bibitem[HS96]{HS96}
Hislop, P.~D. and Sigal, I.~M.
\newblock {\em Introduction to spectral theory}, volume 113 of {\em Applied
  Mathematical Sciences}.
\newblock Springer-Verlag, New York, 1996.
\newblock With applications to Schr\"{o}dinger operators.

\bibitem[JZ99]{JZ99}
Jakobson, D. and Zelditch, S.
\newblock Classical limits of eigenfunctions for some completely integrable
  systems.
\newblock In {\em Emerging applications of number theory ({M}inneapolis, {MN},
  1996)}, volume 109 of {\em IMA Vol. Math. Appl.}, 329--354. Springer, New
  York, 1999.

\bibitem[Kat95]{K95}
Kato, T.
\newblock {\em Perturbation theory for linear operators}.
\newblock Classics in Mathematics. Springer-Verlag, Berlin, 1995.
\newblock Reprint of the 1980 edition.

\bibitem[Ker05]{K05}
Keraani, S.
\newblock Wigner measures dynamics in a {C}oulomb potential.
\newblock {\em J. Math. Phys.}, 46(6):063512, 21, 2005.

\bibitem[KS65]{KS65}
Kustaanheimo, P. and Stiefel, E.
\newblock Perturbation theory of {K}epler motion based on spinor
  regularization.
\newblock {\em J. Reine Angew. Math.}, 218:204--219, 1965.

\bibitem[Kus64]{K64}
Kustaanheimo, P.
\newblock Spinor regularization of the {K}epler motion.
\newblock {\em Ann. Univ. Turku. Ser. A I}, 73:7, 1964.

\bibitem[Laz93]{L93}
Lazutkin, V.~F.
\newblock {\em K{AM} theory and semiclassical approximations to
  eigenfunctions}, volume~24 of {\em Ergebnisse der Mathematik und ihrer
  Grenzgebiete (3) [Results in Mathematics and Related Areas (3)]}.
\newblock Springer-Verlag, Berlin, 1993.
\newblock With an addendum by A. I. Shnirelman.

\bibitem[LC20]{LC20}
Levi-Civita, T.
\newblock Sur la r\'egularisation du probl\`eme des trois corps.
\newblock {\em Acta Math.}, 42(1):99--144, 1920.

\bibitem[LN25]{LN25}
L{\'{e}}autaud, M. and Nonnenmacher, S.
\newblock Introduction to {S}pectral {T}heory.
\newblock
  \url{https://leautaud.perso.math.cnrs.fr/files/spectral-theory-2025.pdf},
  2025.

\bibitem[Loh25a]{L23}
Lohr, N.
\newblock {S}emiclassical measures of eigenfunctions of the attractive
  {C}oulomb operator.
\newblock {\em Ann. Henri Poincar\'e}, 2025.
\newblock Published online.

\bibitem[Loh25b]{L25}
Lohr, N.~G.
\newblock {\em Semiclassical {P}hase {S}pace {D}istributions and {S}cattering
  {T}heory of the {H}armonic {O}scillator and {H}ydrogen {A}tom}.
\newblock ProQuest LLC, Ann Arbor, MI, 2025.
\newblock Thesis (Ph.D.)--Northwestern University.

\bibitem[Mil83]{M83}
Milnor, J.
\newblock On the geometry of the {K}epler problem.
\newblock {\em Amer. Math. Monthly}, 90(6):353--365, 1983.

\bibitem[Mos70]{M70}
Moser, J.
\newblock Regularization of {K}epler's problem and the averaging method on a
  manifold.
\newblock {\em Comm. Pure Appl. Math.}, 23:609--636, 1970.

\bibitem[Non25]{N25}
Nonnenmacher, S.
\newblock An {I}ntroduction to {S}emiclassical {A}nalysis.
\newblock
  \url{https://www.imo.universite-paris-saclay.fr/~stephane.nonnenmacher/enseign/Course-Semiclassical-Analysis2025-total.pdf},
  2025.

\bibitem[RC21]{RC21}
Robert, D. and Combescure, M.
\newblock {\em Coherent states and applications in mathematical physics}.
\newblock Theoretical and Mathematical Physics. Springer, Cham, [2021]
  \copyright 2021.
\newblock Second edition [of 2952171].

\bibitem[RS78]{RS4}
Reed, M. and Simon, B.
\newblock {\em Methods of modern mathematical physics. {IV}. {A}nalysis of
  operators}.
\newblock Academic Press [Harcourt Brace Jovanovich, Publishers], New
  York-London, 1978.

\bibitem[Shn74a]{S74a}
Shnirelman, A.~I.
\newblock Ergodic properties of eigenfunctions.
\newblock {\em Uspehi Mat. Nauk}, 29(6(180)):181--182, 1974.

\bibitem[Shn74b]{S74b}
Shnirelman, A.~I.
\newblock Statistical properties of eigenfunctions.
\newblock In {\em Proceedings of the All-USSR School in Differential Equations
  with Infinite Number of Independent Variables and in Dynamical Systems with
  Infinitely Many Degrees of Freedom ({D}ilijan, {A}rmenia, May 21–June 3,
  1973)}. Armenian Academy of Sciences,, 1974.

\bibitem[Sog16]{S16}
Sogge, C.~D.
\newblock Localized {$L^p$}-estimates of eigenfunctions: a note on an article
  of {H}ezari and {R}ivi\`ere.
\newblock {\em Adv. Math.}, 289:384--396, 2016.

\bibitem[SS71]{SS71}
Stiefel, E.~L. and Scheifele, G.
\newblock {\em Linear and regular celestial mechanics. {P}erturbed two-body
  motion, numerical methods, canonical theory}, volume Band 174 of {\em Die
  Grundlehren der mathematischen Wissenschaften}.
\newblock Springer-Verlag, New York-Heidelberg, 1971.

\bibitem[Tay11]{T11}
Taylor, M.~E.
\newblock {\em Partial differential equations {I}. {B}asic theory}, volume 115
  of {\em Applied Mathematical Sciences}.
\newblock Springer, New York, second edition, 2011.

\bibitem[Tay22]{T22}
Taylor, M.~E.
\newblock Remarks on the hydrogen atom {S}chr\"{o}dinger operator.
\newblock
  \url{https://mtaylor.web.unc.edu/wp-content/uploads/sites/16915/2022/01/hydro.pdf},
  2022.

\bibitem[Ver26]{V26}
Verdasco, S.
\newblock Quantum limits of the laplacian perturbed along a geodesic on
  $\mathbb{S}^{2}$.
\newblock \url{https://arxiv.org/abs/2606.10847}, 2026.

\bibitem[Zel87]{Z87}
Zelditch, S.
\newblock Uniform distribution of eigenfunctions on compact hyperbolic
  surfaces.
\newblock {\em Duke Math. J.}, 55(4):919--941, 1987.

\bibitem[Zwo12]{Z12}
Zworski, M.
\newblock {\em Semiclassical analysis}, volume 138 of {\em Graduate Studies in
  Mathematics}.
\newblock American Mathematical Society, Providence, RI, 2012.

\end{thebibliography}

\end{document}